\newcommand\bigcheck[1]{#1 \raise1ex\hbox{$\hspace{-1ex}{}^\vee$}}
\newcommand\sucheck[1]{#1 \raise0.5ex\hbox{$\hspace{-1ex}{}^\vee$}}
\newcommand{\add}{{\rm add}}
\newcommand{\End}{\mathop{\rm End }}
\renewcommand{\Im}{\mathop{\rm Im  \, }}
\renewcommand{\sl}{s\ell}
\newcommand{\str}{{\rm str}}
\renewcommand{\Re}{\mathop{\rm Re  \, }}
\newcommand{\cF}{\mathcal{F}}
\newcommand{\cG}{\mathcal{G}}
\newcommand{\CC}{\mathbb{C}}
\newcommand{\QQ}{\mathbb{Q}}
\newcommand{\RR}{\mathbb{R}}
\newcommand{\ZZ}{\mathbb{Z}}
\newcommand{\fg}{\mathfrak{g}}
\newcommand{\fh}{\mathfrak{h}}
\newcommand\bl{(\, . \, | \, . \, )}
\renewcommand{\tilde}{\widetilde}
\renewcommand{\hat}{\widehat}
\renewcommand\section{\@startsection {section}{1}{\z@}%
                                   {-3.5ex \@plus -1ex \@minus -.2ex}%
                                   {2.3ex \@plus.2ex}%
                                   {\normalfont\large\bfseries}}
\renewcommand\subsection{\@startsection{subsection}{2}{\z@}%
                                     {-3.25ex\@plus -1ex \@minus -.2ex}%
                                     {0ex \@plus .0ex}%
                                     {\normalfont\normalsize\bfseries}}
\newtheorem{theorem}{Theorem}[section]
\newtheorem{definition}[theorem]{Definition}
\newtheorem{lemma}[theorem]{Lemma}
\newtheorem*{lemma*}{Lemma}
\theoremstyle{remark}
\newtheorem{remark}[theorem]{Remark}
\newtheorem{example}[theorem]{Example}
\def\@maketitle{\newpage
 \null
 \vskip 2em
 \begin{center}%
 \vskip 3em
  {\Large\bf \@title \par}%
  \vskip 1.5em
  {\normalsize
   \lineskip .5em
   \begin{tabular}[t]{c}\@author
   \end{tabular}\par}%
  \vskip 2em

 \end{center}%
 \par
 \vskip 2.5em}
\renewcommand{\epsilon}{\varepsilon}
\definecolor{light}{gray}{.9}
\newcommand{\half}{\frac{1}{2}}
\newcommand{\thalf}{\tfrac{1}{2}}
\newcommand{\zp}{\ZZ_{>0}}
\newcommand{\la}{\lambda}
\newcommand{\al}{\alpha}
\newcommand{\osp}{\mathrm{osp} \,}
\newcommand{\wg}{\widehat{\fg}}
\newcommand{\wh}{\widehat{\fh}}
\newcommand{\thl}{\Theta_{\lambda}}
\newcommand{\tzt}{(\tau, z, t)}
\newcommand{\tuvt}{(\tau, u, v, t)}
\begin{document}

\title{A characterization of modified mock theta functions}


\author{Victor G.~Kac\thanks{Department of Mathematics, M.I.T, 
Cambridge, MA 02139, USA. Email:  kac@math.mit.edu~~~~Supported in part by an NSF grant.} \ 
and Minoru Wakimoto\thanks{Email: ~~wakimoto@r6.dion.ne.jp~~~~.
Supported in part by Department of Mathematics, M.I.T.}}

\maketitle

\setcounter{section}{-1}

\section*{Abstract}
We give a characterization of modified (in the sense of Zwegers) mock theta functions, parallel to that of ordinary theta functions. Namely, modified mock theta functions are characterized by their analyticity properties, elliptic transformation properties, and by being annihilated by certain second order differential operators.

\section{Introduction}
The mock theta functions (also called Appell's functions \cite{Ap}, \cite{KW2}; or Lerch sums in \cite{Z}) of degree $m$ are defined by the following series:
\begin{equation}
\label{e0.1}
\Phi^{\pm [m,s]} (\tau, z_1, z_2, t) = e^{2\pi i mt} \sum_{n \in \ZZ} (\pm 1)^n \ \frac{q^{mn^2 + ns} e^{2 \pi i (mn(z_1 + z_2) + sz_1)}}{1 - e^{2 \pi i z_1}q^n}, 
\end{equation}
\noindent where $ m \in \zp, s \in \ZZ  $ (resp. $ m \in \half + \ZZ_{\geq 0}, s \in \half + \ZZ $) in case of $ + $ (resp. $ - $). These series converge to meromorphic functions in the domain $ X = \{ (\tau, z_1, z_2, t) \in \CC^4 \ | \ \Im \tau > 0\}. $

These kind of functions first appeared in Appell's study of elliptic functions of the third kind in the 1890's \cite{Ap}. A number of identities for various specializations of these functions have been discovered in the attempts to understand Ramanujan's mock theta functions (hence the name for the functions $ \Phi^{\pm [m,s]} $). On the other hand, it has been understood that the numerators of the normalized supercharacters of the lowest rank affine Lie superalgebras (of positive defect) $ \widehat{s \ell}_{2|1} $  and
 $ \widehat{\osp}_{3|2} $ can be expressed in terms of the functions  
$ \Phi^{\pm[m,s]} $,
see \cite{KW2}--\cite{KW4}. (For the higher rank affine Lie superalgebras one needs higher rank mock theta functions, see \cite{KW1}--\cite{KW5} and \cite{GK}, which are not considered in this paper.)

Consider in more detail the example of the simple Lie superalgebra $ \fg = s \ell_{2|1} $ over $\CC$ with the invariant bilinear form $ (a | b) = \str \ ab$.
Choose its Cartan subalgebra $ \fh, $ consisting of supertraceless diagonal matrices. The associated affine Lie superalgebra is the infinite-dimensional Lie superalgebra over $ \CC: $
\[ \wg = \fg [t, t^{-1}] \oplus \CC K \oplus \CC d,  \]
\noindent with the following commutation relations $ (a, b \in \fg, m,n \in \ZZ): $
\[ [at^m, bt^n] = [a,b]t^{m+n} + m \delta_{m,-n} (a|b)K, \quad [d, at^m] = mat^m, \quad [K, \wg] = 0. \]
\noindent Let $ \wh = \CC d + \fh + \CC K $ be the Cartan subalgebra of $ \wg $ with the following coordinates for $ h \in \wh: $
\[ h = 2 \pi i (-\tau d + z_1 (E_{22} + E_{33}) -z_2 (E_{11} + E_{33}) + tK), \mbox{ where } \tau, z_1, z_2, t \in \CC. \]
\noindent For each $ m > 0 $ the Lie superalgebra $ \wg $ has a unique irreducible module $ V_m,  $ such that $ K = mI_{V_m} $ and there exists a non-zero vector $ v_m \in V_m $ for which $ (\fg [t] + \CC d)v_m = 0. $ We showed in \cite{KW2} that the (normalized)  supercharacter of $ V_1 $ is given by the following formula ($ h \in \wh $):
\[ \str_{V_1} e^h = e^{2 \pi i t} \eta (\tau)^3 \vartheta_{11} (\tau, z_1) \vartheta_{11} (\tau, z_2) \mu (\tau, z_1, z_2), \]
\noindent where
\[ \mu (\tau, z_1, z_2) = \vartheta_{11} (\tau, z_2)^{-1} \Phi^{-[\half, \half]} (\tau, z_1, 2z_2-z_1). \]
\noindent The function $ \mu (\tau, z_1, z_2) $ is the prototype for a mock theta function in the sense that specializing the complex variables $ z_1 $ and $ z_2 $ to torsion points (i.e. elements of $ \QQ + \QQ \tau $), one gets mock $ \vartheta $-functions in the sense of Ramanujan, see \cite{Z}, \cite{Za}.

An important discovery of Zwegers is the real analytic, but not meromorphic, function $R (\tau,u)$, $\tau, u \in \CC$, $\Im \tau >0$, such that the modified function
\[ \tilde{\mu} (\tau,z_1,z_2) = \mu (\tau,z_1,z_2) + \frac{i}{2} R (\tau,z_1-z_2) \]
\noindent is a modular invariant function with nice elliptic transformation properties (\cite{Z}, Theorem~1.11).  Furthermore, Zwegers introduces real analytic functions $R_{m,\ell} (\tau,u)$, similar to 
$R (\tau ,u)$ 
(in fact, $R (\tau ,u)=R_{2,1} (\tau ,u/2)-R_{2,-1}(\tau ,u/2)$), 
 such that, adding to a rank $1$ mock theta function of degree $m$  a suitable linear combination of rank $1$ Jacobi forms $\Theta_{m,\ell}$ as coefficients, he obtains a modular invariant real analytic function \cite{Z}, Proposition~3.5 (see (\ref{e2.10}), (\ref{e2.11}) for our version of this construction).  The latter functions are used in the study of Ramanujan's mock theta functions (\cite{Z}, Chapter~4).

In our paper \cite{KW3}, Section 5, we used the functions $R_{m+1,\ell}$ of Zwegers (see (\ref{e2.09}) for our version of these functions) in order to modify the (normalized) supercharacter of the $\hat{\sl}_{2|1}$-module $V_m$, where $m$ is a positive integer.  The normalized supercharacter is given in this case by the following formula:
\[      \widehat{R}^A (\tau, z_1, z_2, t) \ \str_{\, V_m} e^h  = \Phi^{+[m+1,0]}(\tau,z_1,z_2,t)-\Phi^{+[m+1,0]} (\tau, -z_2, -z_1, t), \]
\noindent where $\widehat{R}^A$ is the affine superdenominator (see (\ref{e2.21})  for its expression).

Following Zwegers' ideas, we introduced in \cite{KW3} the real analytic modified numerator
\[ \tilde{\Phi}^{+[m+1,0]} (\tau, z_1, z_2, t) - \tilde{\Phi}^{+[m+1, 0]} (\tau, -z_2, -z_1), \]
\noindent where 
\[ \tilde{\Phi}^{+[m+1,0]} (\tau,z_1,z_2,t) = \Phi^{+[m+1, 0]}  (\tau,z_1,z_2,t) - \half  \Phi^{+[m+1,0]}_{\add}  (\tau,z_1,z_2,t), \]
and $\Phi^{+[m+1,0]}_{\add}$ is a real analytic function (defined by (\ref{e2.09})--(\ref{e2.11}) of the present paper), and proved its modular and elliptic transformation properties. This establishes modular invariance of the modified (normalized) supercharacter  
 \[ \left(\tilde{\Phi}^{+[m+1,0]}(\tau, z_1, z_2, t) - \tilde{\Phi}^{+[m+1,0]} (\tau, -z_2, -z_1, t) \right) / \widehat{R}^A (\tau, z_1, z_2, t)  \]
\noindent  of $ V_m. $

In \cite{KW4} and \cite{KW5} we study, in a similar fashion, the supercharacters of more general integrable $ \widehat{\sl}_{2|1} $ (resp. $ \widehat{\osp}_{3|2} $)-modules, which use the functions $ \Phi^{+[m,s]} $ (resp. $ \Phi^{-[m,s]} $).

In the earlier paper \cite{KP} close connections of character theory of integrable modules over affine Lie algebras $ \wg  $ and the theory of theta functions have been displayed. The key role in this theory was played by the fact that the translations subgroup (of finite index) of the Weyl group of $ \wg $ consists of transformations $ t_{\al} \in \End \wh, $ given by formula (\ref{e1.06}), where $ \al $ runs over the coroot lattice of the simple Lie algebra $ \fg. $  This has lead to a simple characterization of theta functions (recalled in Definition \ref{d1.02}).

In the present paper we propose a similar characterization of the modified mock theta functions 
\[ \tilde{\varphi}^{\pm[m,s]} \tuvt := \tilde{\Phi}^{\pm[m,s]} (\tau, v-u, -v-u, t). \]

Note that, like in the theta function case, one can construct higher rank mock theta functions (see (\ref{e2.03}), (\ref{e2.04})). Moreover, in \cite{KW5} we construct an inductive modification of these functions and use them to construct modular invariant families of modified normalized supercharacters for affine Lie superalgebras $ \wg, $ where $ \fg $ is a basic simple Lie superalgebra, different from $ p s \ell_{n|n}. $ However, we do not know of any characterization of these higher rank modified mock theta functions, similar to that of theta functions. 

The contents of this paper are as follows. In Section 1 we recall the axiomatic definition of the spaces of degree $ m $ theta function $\mathrm{Th}^\pm_m$ (see Definitions \ref{d1.02} and \ref{d1.04}) and their modular transformation properties, following \cite{KP} and \cite{K2} (in the case of $ + $).We discuss in detail the rank 1 theta functions (= Jacobi forms) $ \Theta_{j,m} $ and, in particular, the four Jacobi forms $ \vartheta_{ab}, \ a,b = 0 $ or 1. 

In Section 2 we introduce degree $ m $ mock theta functions $ \Theta^\pm_{\la; B} $ of arbitrary rank (which are theta functions if $ B =\emptyset $), following [KW3--KW5]. Following \cite{Z} and [KW3--KW5] we construct the modification $ \tilde{\Phi}^{\pm[m,s]} $ of rank 1 mock theta functions $ \Phi^{\pm[m,s]} $. Furthermore, we introduce axiomatically the spaces of functions $ \cF^{[m; s, s']}, $ where $ m \in \half \zp, \ s, s' \in \half \ZZ,  $ and state our main Theorems \ref{t2.2} and \ref{t2.3}. Theorem \ref{t2.2} states that for $ m>1 $ and for $ m = 1, (s,s') \notin \ZZ^2, $ the space $ \cF^{[m;s,s']} $ is spanned (over $ \CC  $) by the modified mock theta functions $ \tilde{\varphi}^{+[m,s]} $ (resp. $ \tilde{\varphi}^{-[m,s]} $) if $ s' \in \ZZ $ (resp. $ \in \half + \ZZ $). Theorem \ref{t2.3} covers the remaining cases; in particular it states that in these cases $ \cF^{[m;s,s']} $ is the span of $ \tilde{\varphi}^{+[m,s]} $ (resp. $ \tilde{\varphi}^{-[m,s]} $) if $ s' \in \ZZ $ (resp. $ \in \half + \ZZ $) and a holomorphic affine superdenominator.

In Section 3 we give proofs of the main theorems, based on Lemmas \ref{L3.1}--\ref{L3.3}, and in Section 4 we prove these lemmas. 

\section{A brief theory of theta functions}
Let $ \fh $ be an $ \ell $-dimensional vector space over $ \CC, $ endowed with a non-degenerate symmetric bilinear form $ \bl. $ Let $ m $ be a positive real number and let $ L $ be a lattice (i.e. a free abelian subgroup) in $ \fh,$ such that 
\begin{equation}
\label{e1.01}
m (\al | \beta) \in \ZZ \mbox{ for all } \al, \beta \in L.
\end{equation}
\noindent This condition means that $ mL \subset L^*, $ where $ L^* = \{ \la \in \CC L \ | \ (\la | L) \subset \ZZ   \} $ is the dual lattice. 

Let $ \widehat{\fh} = \fh \oplus \CC K \oplus \CC d $ be the $ \ell + 2 $-dimensional vector space over $ \CC $ with the (non-degenerate) symmetric bilinear form $ \bl, $ extended from $ \fh  $ by letting
\begin{equation}
\label{e1.02}
(\fh |\CC K + \CC d)=0, \quad (K | K ) = 0 = (d | d), \quad (K | d) = 1.
\end{equation}
\noindent We will identify $ \fh 
 $ with $ \fh^* $ and $ \widehat{\fh} $ with $ \widehat{\fh}^* $ using this form. Given $ \la \in \widehat{ \fh},  $ we denote by $ \bar{\la} $ its orthogonal projection on $ \fh. $
 Let
 \begin{equation}
 \label{e1.03}
 X = \{ h \in \widehat{\fh} \ | \ \Re (K | h) > 0\}.
 \end{equation}
 \noindent For a function $ F $ on $ X $ we will say that it has \textit{degree} $ m $ if 
 \begin{equation}
 \label{e1.04}
 F(h + aK) = e^{ma} F(h) \mbox{ for all } a \in \CC.
 \end{equation}
 
\noindent We shall use the following coordinates on $ \widehat{\fh}: $
\begin{equation}
\label{e1.05}
h = 2 \pi i (- \tau d + z + t K ) =: \tzt, \mbox{ where } \tau, t \in \CC, z \in \fh.
\end{equation}
\noindent Then $ X = \{ (\tau, z , t)  | \Im \tau > 0   \}, $ and $ q:= e^{2 \pi i \tau} = e^{-K}. $

\begin{remark}
\label{r1.01}
One has an obvious bijection between functions on $ X_0 = \{ (\tau, z ) | \Im \tau > 0, z \in \fh   \} $ and functions of degree $ m $ on $ X: $
\[ F(\tau, z) \mapsto F(\tau, z, t) = e^{2 \pi i m t} F(\tau, z), \]
\noindent the converse map being $ F(\tau, z ,t) \mapsto F(\tau, z, 0). $ Adding the factor $ e^{2 \pi i m t} $ simplifies the transformation formulae. Given a function $ F \tzt  $ of degree $ m,  $ we will let $ F(\tau, z) = F(\tau, z, 0)  $ throughout the paper. 
\end{remark}

For $ \al \in \fh  $ define the shift $ p_{\al} $ of $ \widehat{\fh } $ by 
\begin{equation}
\label{e1.06a}
p_{\al} (h) = h + 2 \pi i \al, \quad h \in \widehat{\fh}.
\end{equation}
\noindent Define the following representation of the additive group of the vector space $ \fh $ on the vector space $ \widehat{\fh}: $
\begin{equation}
\label{e1.06}
t_{\al} (h) = h + (K | h) \al - \left(  (\al | h) + \frac{| \al |^2}{2} (K | h) \right)K, \quad \al \in \fh, \ h \in \widehat{\fh}.
\end{equation}
\noindent (We use throughout the shorthand $ | \al |^2 $ for $ (\al | \al). $) This action leaves the bilinear form $ \bl $ on $ \widehat{\fh} $ invariant and fixes $ K, $ hence leaves the domain $ X $ invariant.
Let $ D $ be the Laplace operator on $ \widehat{\fh}, $ associated to the bilinear form $ \bl, $ i.e. $ De^h = | h |^2 e^h,\, h \in \widehat{\fh}. $ In the coordinates (\ref{e1.05}) we have:
\begin{equation}
\label{e1.07}
D = \frac{1}{4 \pi^2} \left( 2 \frac{\partial}{\partial t} \frac{\partial}{\partial \tau} - \sum_{i = 1}^{\ell} \left( \frac{\partial}{\partial z_i} \right)^2    \right),
\end{equation}
\noindent where $ z_1, \ldots, z_{\ell} $ is an orthonormal basis of $ \fh. $

\begin{definition}
\label{d1.02}
Assume that $ L $ is a lattice of rank $ \ell $ (i.e. $ \CC L = \fh $) and that the restriction of the form $ \bl $ to $ L $ is positive definite. For $ m> 0, $ such that (\ref{e1.01}) holds, a \textit{theta function of degree} $ m $ is a holomorphic function $ F $ in the domain $ X $ of degree $ m,  $ such that the following properties hold:
\begin{enumerate}
\item[(i)] $ F(p_{\al}(h)) = F(h) $ for all $ \al \in L; $
\item[(ii)] $ F(t_{\al} (h)) = F(h) $ for all $ \al \in L $;
\item[(iii)] $ DF = 0. $
\end{enumerate}
\end{definition}

\begin{remark}
\label{r1.03}
Due to (\ref{e1.04}) and (\ref{e1.06}),  property (ii) of a function $ F $ of degree $ m $ is equivalent to the property
\begin{enumerate}
\item[$(ii)^\prime $] 
$ F(p_{\tau \al}(h)) = e^{- \pi i m \tau | \al |^2 - m(\al | h)} F(h) $ for all $ \al \in L $.
\end{enumerate}
\end{remark}

\begin{definition}
\label{d1.04}
A \textit{signed theta function} is defined by the same axioms as in Definition \ref{d1.02}, except that (i) and (ii) are replaced by their signed versions:
\begin{enumerate}
\item[$ (i)_- $] $ F(p_{\al}(h)) = (-1)^{m | \al|^2} F(h)$ for all $ \al \in L, $
\item[$ (ii)_- $] $ F(t_{\al} (h)) = (-1)^{m | \al |^2} F(h) $ for all $ \al \in L. $
\end{enumerate}
\noindent (Then $ (ii)' $ is replaced by $ (ii)_{-}^{'} $ with the sign $ (-1)^{m| \al |^2} $ inserted in the RHS.). 
\end{definition}



In order to construct theta functions and signed theta functions, let 
\[ L_{\mathrm{even}} = \{ \al \in L \big{|}\,\, m |\al|^2  \in \ZZ_{\mathrm{even}}  \}, \ L_{\mathrm{odd}} = \{ \al \in L \big{|}\,\, m |\al|^2  \in \ZZ_{\mathrm{odd}}  \}, \]
\noindent and let
\[ \begin{aligned}
P^+_m & = \{ \la \in \widehat{\fh}^* |\,\, (\la | K) = m,\, (\la | L) \subset \ZZ   \}, \\
P^-_m & = \{ \la \in \widehat{\fh}^* |\,\, (\la | K) = m,\, (\la | L_{\mathrm{even}}) \subset \ZZ, \, (\la | L_{\mathrm{odd}}) \subset \half + \ZZ \}.
\end{aligned} \]

\noindent Note that, for $ \la \in \widehat{\fh}, $ such that $ (\la | K) = m, $ among the vectors $ \{ \la - aK |\, a \in \CC \} $ there is a unique isotropic one, for $ a = \frac{|\la|^2}{2m}. $ Hence, in view of axioms (ii) (resp. $ (\mathrm{ii})_- $) and (iii), we construct, for $ \la \in P^+_m $ (resp. $ \la \in P^-_m $) the theta function $ \thl^+ $ (resp. signed theta function $ \thl^- $) by 
\begin{equation}
\label{e1.08}
\thl^{\pm} = e^{-\frac{|\la|^2}{2m}K} \sum_{\al \in L} (\pm 1)^{m|\al|^2} e^{t_{\alpha} (\la)}.
\end{equation}

\noindent The series (\ref{e1.08}) converges to a holomorphic function in the domain $ X $ and obviously satisfies the axioms (i), (ii), (resp. $ \mathrm{(i)}_- $, $ \mathrm{(ii)}_- $), and (iii). Note that
\begin{equation}
\label{e1.09}
\Theta^{\pm}_{\la + m \gamma + aK} = (\pm 1)^{m| \gamma |^2} \Theta^{\pm}_{\la} \mbox{ for } a \in \CC, \gamma \in L.
\end{equation}
\noindent In coordinates (\ref{e1.05}) we have the usual formula, where
$ \la \in P_m^\pm $:
\begin{equation}
\label{e1.10}
\Theta^{\pm}_{\la} (\tau, z, t) = e^{2 \pi i m t}  \sum_{\al \in L} (\pm 1)^{m | \al |^2} 
q^{\frac{m}{2}
| \al + \frac{\bar{\la}}{m} |^2} e^{2 \pi i m
 (\al + \frac{\bar{\la}}{m} | z)}.
\end{equation}
 
 Recall the action of the group $ SL_2 (\RR) $ on the domain $ X $ in coordinates (\ref{e1.05}):
 \begin{equation}
 \label{e1.11}
 \begin{pmatrix}
 a & b \\
 c & d
 \end{pmatrix} \cdot (\tau, z ,t ) = \left( \frac{a \tau + b}{c \tau + d}, \frac{z}{c \tau + d}, t - \frac{c (z | z)}{2(c \tau + d)}
 \right).
 \end{equation}
\noindent This action induces the right action of weight $ w \in \half \ZZ $ of $ SL_2 (\RR) $ on functions in $ X: $
\begin{equation}
\label{e1.12}
F \overset{w}{\bigg{|}}_{\left( \begin{smallmatrix}
a & b \\
c & d \\
\end{smallmatrix} \right)} (\tau, z , t) = (c \tau + d )^{-w} F \left( \begin{pmatrix}
a & b \\
c & d \\
\end{pmatrix} \cdot (\tau, z , t)
\right)
\end{equation}
\noindent (Actually, this is an action of the double cover of $ SL_2(\RR) $ if $ w \in \half + \ZZ. $) The square root of a complex number $ a = re^{i \theta}, $ where $ r \geq 0, - \pi < \theta \leq \pi, $ is, as usual, chosen to be $ a^{\half} = r^{\half} e^{\frac{i \theta}{2}}. $ As usual, we will discuss the action of its subgroup $ SL_2 (\ZZ), $ which is generated by the elements
\[ S = \begin{pmatrix}
0 & -1 \\
1 & 0 \\
\end{pmatrix} \quad \mbox{ and } \quad T = \begin{pmatrix}
1 & 1 \\
0 & 1 \\
\end{pmatrix}. \]

Denote by $ \mathrm{Th}^+_m $ 
(resp. $ \mathrm{Th}_m^- $) 
the space of holomorphic functions on $ X $ of 
degree   $ m$  satisfying properties 
(i), (ii) (resp. $\mathrm{(i)}_-$ , $\mathrm{(ii)}_-$  ), and (iii).

\begin{theorem}
\label{t1.05}
Let $ L $ be a positive definite lattice of rank $ \ell $ and let $ m $ be a positive real number such that (\ref{e1.01}) holds. Then
\begin{enumerate}
\item[(a)] The set of theta functions $ \{ \Theta^+_{\la} \ | \ \la \in P^+_m \mod{(m L + \CC K)} \} $ is a basis of the space $ \mathrm{Th}^+_m. $
\item[(b)] One has the following modular transformation formulas for $ \la \in P^+_m: $
\[ \thl^+ \overset{\ell / 2}{\bigg{|}}_S = e^{-\frac{\pi i \ell}{4}} | L^* / mL |^{- \half} \hspace{-2em} \sum_{\substack{\mu \in \\ P^+_m \! \!\!\! \mod{(mL+ \CC K)}}} \hspace{-2em} e^{- \frac{2 \pi i}{m} (\bar{\la}| \bar{\mu})} \Theta_{\mu}^+. \]
\item[(c)] Provided that $ m(\al | \al) \in 2 \ZZ $ for all $ \al \in L, $ one has
\[ \thl^+ \overset{\ell / 2}{\bigg{|}}_T = e^{\frac{\pi i | \bar{\la} |^2}{m}} \thl^+, \]
\noindent hence the space $ \mathrm{Th}^+_m $ is $ SL_2(\ZZ) $-invariant. 
\end{enumerate}
\end{theorem}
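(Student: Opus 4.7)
My plan is to handle the three parts in order, each by standard Fourier-analytic machinery on $X$.

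For part (a), linear independence reads off directly from (\ref{e1.10}): each $\Theta^+_\lambda$, viewed in the variable $z$, has Fourier spectrum contained in $\bar{\lambda}+mL\subset L^*$, and distinct classes in $P^+_m/(mL+\CC K)$ give disjoint spectra. For spanning, take $F\in\mathrm{Th}^+_m$. The degree condition (\ref{e1.04}), axiom (i), and holomorphicity in $z$ allow me to write
\[F(\tau,z,t) = e^{2\pi i m t}\sum_{\mu\in L^*} c_\mu(\tau)\,e^{2\pi i(\mu|z)}.\]
Applying the Laplacian (\ref{e1.07}) and enforcing axiom (iii) termwise yields the first-order ODE $c_\mu'(\tau)=(\pi i|\mu|^2/m)\,c_\mu(\tau)$, hence $c_\mu(\tau)=a_\mu q^{|\mu|^2/(2m)}$ for scalars $a_\mu$. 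Feeding this into axiom (ii)$'$ of Remark~\ref{r1.03} collapses to $a_{\mu+m\alpha}=a_\mu$ for all $\alpha\in L$, so $a_\mu$ depends only on $\mu \bmod mL$, and $F$ is the claimed linear combination of the $\Theta^+_\lambda$.

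For part (b), the strategy is Poisson summation on $L$. After substituting $S\cdot(\tau,z,t)$ into (\ref{e1.10}), each summand is a Gaussian in $\alpha+\bar{\lambda}/m$ times an exponential linear in $z$. I extend this to a Schwartz function on $\fh$, apply Poisson summation to exchange $\sum_{\alpha\in L}$ for $\mathrm{vol}(\fh/L)^{-1}\sum_{\mu\in L^*}$, and evaluate the $\ell$-dimensional Gaussian Fourier transform; the latter produces the factor $\tau^{\ell/2}$, which cancels the weight $\tau^{-\ell/2}$ from (\ref{e1.12}), together with the root of unity $e^{-\pi i\ell/4}$ coming from the branch of $(im)^{-\ell/2}$. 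Regrouping the resulting sum over $L^*$ by cosets of $mL$ identifies it with the stated sum over $P^+_m\bmod(mL+\CC K)$, while the normalization $|L^*/mL|^{-1/2}$ emerges from the identities $|L^*/mL|=|L^*/L|\cdot m^\ell$ and $\mathrm{vol}(\fh/L)=|L^*/L|^{1/2}$.

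For part (c), $T$ acts trivially on $z$ and $t$ and shifts $\tau\mapsto\tau+1$, so $c\tau+d=1$ and no weight factor appears. In (\ref{e1.10}) each summand picks up the phase $e^{\pi i m|\alpha+\bar{\lambda}/m|^2}$, and the expansion
\[m\bigl|\alpha+\bar{\lambda}/m\bigr|^2 = m|\alpha|^2 + 2(\alpha|\bar{\lambda}) + |\bar{\lambda}|^2/m\]
shows that the first term lies in $2\ZZ$ by hypothesis, the second in $2\ZZ$ since $\bar{\lambda}\in L^*$, and the third supplies the common factor $e^{\pi i|\bar{\lambda}|^2/m}$ that pulls out of the sum. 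Invariance of $\mathrm{Th}^+_m$ under $SL_2(\ZZ)$ then follows since the group is generated by $S$ and $T$, using (a) to see that the $S$-image lies in $\mathrm{Th}^+_m$.

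The main obstacle I expect is not conceptual but the normalization bookkeeping in (b): correctly tracking the square-root branch of $\tau^{\ell/2}$ (fixed by the convention stated after (\ref{e1.12})), the Gaussian constant, and the discriminant $|L^*/mL|$, so that the formula in (b) emerges with exactly the stated coefficient. Parts (a) and (c) are essentially algebraic once the Fourier expansion in (a) has been set up.
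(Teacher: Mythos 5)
Your proposal is correct and follows essentially the same route as the paper, whose proof of Theorem~\ref{t1.05} consists of the key intertwining formula (\ref{e1.13}) together with a citation of \cite{KP} and \cite{K2} (Proposition 13.3, Lemma 13.2, Theorem 13.5); the Fourier expansion in $z$ combined with the heat-equation constraint from axiom (iii) and periodicity under (ii)$'$ for part (a), Poisson summation on $L$ for part (b), and the direct phase computation for part (c) are precisely the arguments carried out in those references. The only cosmetic difference is that you deduce $SL_2(\ZZ)$-invariance from the explicit formulas in (b) and (c) rather than invoking (\ref{e1.13}) directly, which is an equivalent bookkeeping choice.
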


\begin{proof}
The key formula for the proof is 
\begin{equation}
\label{e1.13}
(DF)
\overset {w}{\bigg{|}}_A
= (c \tau + d)^2 D(F\overset{w} {\bigg{|}}_A),\,\, \ A = \begin{pmatrix}
a & b \\
c & d \\
\end{pmatrix} \in SL_2 (\RR),\, 
\hbox{if}\,\, w=\frac{\ell}{2}.
\end{equation}
\noindent See \cite{KP} or \cite{K2}, Proposition 13.3, Lemma 13.2, and Theorem 13.5, for details. 
\end{proof}

\begin{theorem}
\label{t1.06}
Let $ L $ and $ m$ be as in Theorem \ref{t1.05}, and assume that $ L_{\mbox{odd}} \neq \emptyset. $ Then
\begin{enumerate}
\item[(a)] The set $ \{ \thl^- \ | \ \la \in P^-_m \mod{(mL + \CC K)} \} $ is a basis of the space $ \mathrm{Th}_m^-. $
\item[(b)] One has for $ \la \in P^-_m $ the same modular transformation formulas for $ \thl^-, \la \in P^-_m, $ where $ P^+_m $ is replaced by $ P^-_m, $
 \end{enumerate}
\noindent hence the space $ \mathrm{Th}^-_m $ is $ SL_2 (\ZZ) $-invariant.
\end{theorem}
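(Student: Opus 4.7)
The proof will parallel that of Theorem~\ref{t1.05}, with the sign character $\alpha \mapsto (-1)^{m|\alpha|^2}$ (which is a homomorphism $L \to \{\pm 1\}$, nontrivial by the assumption $L_{\mathrm{odd}} \neq \emptyset$) being tracked through each step.

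For part (a), I would first note that for $\lambda \in P^-_m$ the series (\ref{e1.08}) satisfies axioms $(\mathrm{i})_-$, $(\mathrm{ii})_-$, and (iii) by direct substitution, and converges holomorphically on $X$ because positive-definiteness of $\bl|_L$ makes the Gaussian dominate the sign. Linear independence of $\{\Theta^-_\lambda \mid \lambda \in P^-_m \mod (mL + \CC K)\}$ follows from (\ref{e1.10}): distinct cosets give exponentials $e^{2\pi i(\bar\lambda/m + \alpha\mid z)}$ supported on distinct cosets of $L$ in $\fh^*$. For spanning, take $F \in \mathrm{Th}^-_m$ and Fourier-expand in $z$ using Remark~\ref{r1.01}, writing $F(\tau,z,t) = e^{2\pi imt}\sum_\mu c_\mu(\tau) e^{2\pi i(\mu\mid z)}$. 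Axiom $(\mathrm{i})_-$ forces $(\mu\mid\alpha) \in \ZZ$ for $\alpha\in L_{\mathrm{even}}$ and $(\mu\mid\alpha)\in\tfrac12+\ZZ$ for $\alpha\in L_{\mathrm{odd}}$, i.e.\ the Fourier support lies in $\bigcup_{\lambda\in P^-_m/(mL+\CC K)}(\bar\lambda/m + L)$. Axiom (iii) applied term-by-term (using (\ref{e1.07})) forces $c_\mu(\tau) = a_\mu q^{m|\mu|^2/2}$ for constants $a_\mu$. Finally, axiom $(\mathrm{ii})_-^{\,\prime}$ (the twisted version of Remark~\ref{r1.03}) relates $a_{\mu+m\alpha}$ to $(-1)^{m|\alpha|^2}a_\mu$, so the coefficients on each $L$-orbit reconstruct exactly one copy of $\Theta^-_\lambda$.

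For part (b), the $T$-transformation is a direct calculation: under $\tau \mapsto \tau+1$ the summand in (\ref{e1.10}) acquires a factor $e^{\pi i m|\alpha+\bar\lambda/m|^2} = (-1)^{m|\alpha|^2}\cdot e^{2\pi i(\alpha\mid\bar\lambda)}\cdot e^{\pi i|\bar\lambda|^2/m}$. The first factor cancels the sign $(-1)^{m|\alpha|^2}$ already present, while $(\lambda\mid\alpha)\in\ZZ$ on $L_{\mathrm{even}}$ and $\in\tfrac12+\ZZ$ on $L_{\mathrm{odd}}$ (since $\lambda\in P^-_m$) gives $e^{2\pi i(\alpha\mid\bar\lambda)} = (-1)^{m|\alpha|^2}$, which restores the sign. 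Hence $\Theta^-_\lambda\big|_T = e^{\pi i|\bar\lambda|^2/m}\Theta^-_\lambda$.

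The $S$-transformation is the main step. By (\ref{e1.13}) the operator $D$ commutes with the weight $\ell/2$ action of $S$, so $\Theta^-_\lambda\big|_S$ still satisfies (iii); moreover $(\mathrm{i})_-$ and $(\mathrm{ii})_-$ are interchanged with each other by $S$ (since $p_\alpha$ and $t_\alpha$ are swapped up to a factor, as in the $+$ case), so $\Theta^-_\lambda\big|_S \in \mathrm{Th}^-_m$. By part (a) it is therefore a linear combination of $\Theta^-_\mu$, $\mu \in P^-_m \mod (mL+\CC K)$, and the coefficients are obtained by Poisson summation on $L$ applied to the Gaussian times the character $\alpha\mapsto(-1)^{m|\alpha|^2}$. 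The key point, where the hard combinatorial content sits, is that this character on $L$ corresponds under Poisson duality to a shift of the dual lattice $L^*/m$ by a half-integral vector that is precisely represented by $P^-_m \mod (mL + \CC K)$ (this uses the non-degenerate pairing between $P^-_m/(mL+\CC K)$ and itself induced by $\bl/m$). The resulting formula is identical in shape to Theorem~\ref{t1.05}(b) with $P^+_m$ replaced by $P^-_m$, and $SL_2(\ZZ)$-invariance of $\mathrm{Th}^-_m$ follows since $S$ and $T$ both preserve it. The main obstacle will be verifying cleanly that the Poisson-dual lattice of the signed sum is indexed by $P^-_m$ rather than $P^+_m$, which amounts to checking that $\lambda\in P^-_m$ iff $e^{2\pi i(\bar\lambda\mid\alpha)} = (-1)^{m|\alpha|^2}$ for all $\alpha\in L$.
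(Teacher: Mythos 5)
Your argument is correct in outline, but it takes a genuinely different route from the paper. The paper's proof of Theorem \ref{t1.06} is a one-line reduction of the signed case to Theorem \ref{t1.05}, quoting Proposition A3 of \cite{KW4}; you instead rerun the whole direct proof of Theorem \ref{t1.05} with the character $\al\mapsto(-1)^{m|\al|^2}$ carried through the Fourier expansion, the heat-equation step, and the Poisson summation. Both work. Your version is self-contained and has the merit of making visible why the hypothesis $m(\al|\al)\in 2\ZZ$ of Theorem \ref{t1.05}(c) can be dropped here: the factor $e^{\pi i m|\al|^2}$ produced by $T$ is cancelled by the sign in the series and then regenerated by $e^{2\pi i (\al|\bar{\la})}$ precisely because $\la\in P^-_m$. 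The paper's reduction is much shorter but outsources exactly this bookkeeping to \cite{KW4}. Two points to tidy up. First, your normalization in part (a) is internally inconsistent: if you write the Fourier modes as $e^{2\pi i(\mu|z)}$ with $\mu$ in a coset of $L$ shifted by $\bar{\la}/m$, the heat equation forces $c_\mu = a_\mu q^{|\mu|^2/2m}$, not $a_\mu q^{m|\mu|^2/2}$; with the convention of (\ref{e1.10}) the modes are $e^{2\pi i m(\al+\bar{\la}/m\,|\,z)}$ and the exponent $q^{\frac{m}{2}|\al+\bar{\la}/m|^2}$ comes out right. Second, the Poisson-summation step in (b) is asserted rather than carried out; however, the verification you single out as the main obstacle --- that $\la\in P^-_m$ (with $(\la|K)=m$) iff $e^{2\pi i(\bar{\la}|\al)}=(-1)^{m|\al|^2}$ for all $\al\in L$ --- is immediate from the definition of $P^-_m$ once one observes that $\al\mapsto m|\al|^2\bmod 2$ is additive on $L$ (since $2m(\al|\beta)\in 2\ZZ$), so the remaining details are routine and the constant $e^{-\pi i\ell/4}|L^*/mL|^{-1/2}$ comes out as in Theorem \ref{t1.05}(b) because $P^-_m\bmod (mL+\CC K)$ is a torsor over $L^*/mL$.
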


\begin{proof}
Uses reduction to Theorem 1.5, see \cite{KW4}, Proposition A3.
\end{proof}

\begin{remark}
\label{r1.07}
In view of (\ref{e1.10}) one sometimes uses a slightly different notation $ \Theta^{\pm}_{\bar{\la}, m} $ for (\ref{e1.08}).
\end{remark}

\begin{example}
\label{x1.08}
Let $ L = \ZZ $ with the bilinear form $ (a| b) = 2ab, $ so that $ L^* = \half \ZZ.  $ Then for a positive integer $ m $ we have the following basis of $ \mathrm{Th}^+_m \,\,\, (\tau, z, t \in \CC, \Im \tau > 0):$
\[ \Theta_{j,m}^+ \tzt = e^{2 \pi i m t} \sum_{n \in \ZZ}  q^{m(n + \frac{j}{2m})^2} e^{2 \pi i m( n + \frac{j}{2m})z}, \quad j = 0, 1, \ldots, 2m-1. \]
\noindent The modular transformation formulae are:
\[ \begin{aligned}
\Theta_{j,m}^+ \left( - \frac{1}{\tau}, \frac{z}{\tau}, t - \frac{z^2}{2 \tau} \right) & = \left(  \frac{-i \tau}{2m}\right)^{\frac{1}{2}}\,\,  \sum_{j' = 0}^{2m-1} e^{- \frac{\pi i jj'}{m}} \Theta^+_{j', m} \tzt, \\
\Theta^+_{j,m} \left( \tau + 1, z ,t \right) & = e^{\frac{\pi i j^2}{2m}} \Theta^+_{j,m} \tzt.
\end{aligned} \]
\end{example}

\begin{example}
\label{x1.09}
Let $ L $ and $(.|.)$ be the same as in Example \ref{x1.08}. Then for  $ m \in \frac{1}{2} +\ZZ_{\geq 0} $ we have the following basis of $ \mathrm{Th}_m^-: $
\[ \Theta^-_{j+ \half,m} \tzt = e^{2 \pi i m t} \sum_{n \in \ZZ} (-1)^n q^{m \left( n + \frac{2j + 1}{4m}\right)^2} e^{4 \pi i m \left( n + \frac{2 j + 1}{4m}\right)z}, \quad j = 0, 1, \ldots, 2m-1.  \]
\noindent The modular transformation formulae are ($ i = 0, 1, \ldots, 2m-1 $):
\[ \begin{aligned}
\Theta^-_{  j + \half,m} \left(  -\frac{1}{\tau}, \frac{z}{\tau}, t - \frac{z^2}{2 \tau}  \right) & = \left( - \frac{i \tau}{m}\right)^{\half}\,\,\, \sum_{j' = 0}^{2m-1} e^{-\frac{ \pi i}{m} (j + \half) (j' + \half)} \Theta^-_{j'+ \half,m} \tzt, \\
\Theta^-_{j + \half,m} (\tau + 1, z, t) & = e^{\frac{\pi i}{2m} (j + \half)^2} \Theta^-_{j + \half,m} \tzt.
\end{aligned}
 \]
 \noindent Note that $ \mathrm{Th}^-_{\frac{1}{2}} $ is 1-dimensional, and is spanned by $ \Theta^-_{\half , \half} \tzt = -ie^{\pi i t} 
\vartheta_{11} (\tau, 2z) $.

One has the following elliptic transformation formulas:
\begin{equation}
 \begin{split}
\label{e1.15}
\Theta^\pm_{j,m} (\tau, z+a, t) & = e^{\pi i j a} \Theta^\pm_{j,m} \tzt \mbox{ if } am \in \ZZ, \\
\ \Theta^\pm_{j,m} \left(\tau, z + \frac{\tau}{m}, t \right) & = q^{-\frac{1}{4m}} e^{-\pi i z} \Theta^\pm_{j+1, m} \tzt. \\
\end{split} 
\end{equation}
\end{example}

\begin{remark}
\label{r1.10} For each $ m \in \frac{1}{2}\zp, $ all the function $ \Theta^{\pm}_{j,m}, \ j \in \half \ZZ, $  span a finite dimensional $ SL_2(\ZZ) $-invariant space, whose modular and elliptic transformation properties follow from \cite{KW4}, Proposition A3. Especially important are the four Jacobi forms
\[ \begin{aligned}
\vartheta_{00} (\tau, z) & = \Theta^+_{0, \half} (\tau, 2z,0), \quad \vartheta_{01} (\tau, z) = \Theta^-_{0, \half} (\tau, 2z,0), \\
\vartheta_{10} (\tau, z) & = \Theta^+_{\half, \half} (\tau, 2z,0), \quad \vartheta_{11} (\tau, z) = i \Theta^-_{\half, \half} (\tau, 2z,0),  \
\end{aligned} \]
\noindent which span an $ SL_2 (\ZZ) $-invariant subspace of functions, holomorphic in $ X_0 $ (see \cite{M}, p 36, or \cite{KW3}, Proposition A7 for the modular transformation formulas).
\end{remark}

\begin {remark}
\label{r1.11}  
For an arbitrary $w\in \half \ZZ$ one should add the following term in the RHS of (\ref{e1.13}): $2c(\frac{\ell}{2}-w)(c\tau +d)^{-1}(\frac{dF}{d\tau})\overset {w}{\bigg{|}}_A$.
\end{remark}

\section{Mock theta functions.}
In this Section we keep notation and assumptions of Section 1, except for the assumption that $ \CC L = \fh $ in Definition \ref{d1.02} (resp. \ref{d1.04}) of a theta (resp. signed theta) function. 

Let $ B \subset \fh $ be a linearly independent set of vectors, such that the following two properties hold:
\begin{equation}
\label{e2.01}
(B|B) = 0 \mbox{ and } \fh = \CC L \oplus \CC B.
\end{equation}
\noindent Let
\begin{equation}
\label{e2.02}
P^{\pm}_{m;B} = \{ \la \in P^{\pm}_m  \ \big{|} \ (\la | B) = 0 \}
\end{equation}

\noindent For $ \la \in P^{\pm}_{m;B} $ we construct the degree $ m $ \textit{mock theta function } $ \Theta^+_{\la;B} $ and signed \textit{mock theta function} $ \Theta^-_{\la;B} $ as follows [KW3--5], cf. (\ref{e1.08}):
\begin{equation}
\label{e2.03}
\Theta^{\pm}_{\la; B} = e^{-\frac{|\la|^2}{2m}} \sum_{\la \in L} (\pm 1)^{m |\al|^2} t_{\al} \frac{e^{\la}}{\prod_{\beta \in B} (1-e^{- \beta})}.
\end{equation}

\noindent This series converges to a meromorphic function in the domain $ X,$ and in the coordinates (\ref{e1.05}) it looks as follows:
\begin{equation}
\label{e2.04}
\Theta^{\pm}_{\la;B} \tzt = e^{2 \pi i m t} \sum_{\al \in L} (\pm 1)^{m|\al|^2} \frac{q^{\frac{m}{2}|\al + \frac{\bar{\la}}{m}|^2} e^{2 \pi i m (\al + 
\frac{\bar{\la}}{m}| z)}}{\prod_{\beta \in B} \left( 1 - q^{- (\al + 
\frac{\bar{\la}}{m}| \beta)} e^{-2 \pi i (\beta | z)} \right)}.
\end{equation}

\noindent Note that for mock theta functions we have an analogue of (\ref{e1.09}):
\begin{equation}
\label{e2.05}
\Theta^{\pm}_{\la + m \gamma + a K; B} = (\pm 1)^{m|\gamma|^2} \Theta^{\pm}_{\la; B} \mbox{ for } a \in \CC, \gamma \in L.
\end{equation}
\noindent Moreover, these functions satisfy all axioms
$ (i), (ii) $ (resp. $ (i)_-, (ii)_- $) and $ (iii) $ for $ + $ (resp. $ - $)
of Definition \ref{d1.02} (resp. \ref{d1.04}). 
However, not being holomorphic, they are not members of $ \mathrm{Th}^+_m $ (resp. $ \mathrm{Th}^-_m $)

\begin{example}
\label{x2.01}
Let $ \fh $ be a 2-dimensional vector space with basis $ \al_1, \al_2 $ and the symmetric bilinear form $ \bl $, given by 
\[ |\al_i|^2 = 0, \quad i = 1,2, \quad (\al_1 | \al_2) = 1. \]
\noindent Let $ \theta = \al_1 + \al_2, $ so that $ |\theta|^2 = 2, $ let $ L = \ZZ \theta$ and $ B = \{ \beta = \al_1 \}. $ Let $ m $ be a positive integer. Then 
\[ P^+_{m;B} = \{ \la^+_{m,s} = md + s \al_1 \ \big{|} \ s \in \ZZ \} + \CC K, \]
\noindent and in coordinates
\begin{equation}
\label{e2.06}
h = 2 \pi i (- \tau d - z_1 \al_2 - z_2 \al_1 + tK) = :(\tau, z_1, z_2, t)
\end{equation}
\noindent we have:
\[ \Theta^+_{\la^+_{m,s};B} = \Phi^{+[m,s]} (\tau, z_1, z_2, t), \]
\noindent where 
\begin{equation}
\label{e2.07}
\Phi^{+[m,s]} (\tau, z_1, z_2, t) = e^{2 \pi i m t} \sum_{n \in \ZZ}  \frac{ q^{mn^2+ns}  e^{2 \pi i (mn(z_1 + z_2) +sz_1)} }{1 - e^{2 \pi i z_1} q^n}. 
\end{equation}

\noindent Now, let $ m \in \half + \ZZ_{\geq 0}. $ Then
\[ P^-_{m;B} = \{ \la^-_{m,s} = md + s 
\al_1 \  \big{|} \ s \in \thalf + \ZZ \} + \CC K, \]
\noindent and in coordinates (\ref{e2.06}) we have
\[ \Theta^-_{\la^-_{m,s}; B} = \Phi^{-[m,s]} (\tau, z_1, z_2, t), \]
\noindent where 
\begin{equation}
\label{e2.08}
\Phi^{-[m,s]} (\tau, z_1, z_2, t) = e^{2 \pi i m t} \sum_{n \in \ZZ} (-1)^n \ \frac{ q^{mn^2+ns}  e^{2 \pi i (mn(z_1 + z_2) +sz_1)} }{1 - e^{2 \pi i z_1} q^n}.
\end{equation}
\end{example}

Unlike the (signed) theta functions $ \thl^{\pm}, $ the (signed) mock theta functions $ \Theta^{\pm}_{\la;B} $ with $ B \neq \emptyset, $ have neither good elliptic transformation properties analogous to (i) and (ii)  
(resp. $(\mathrm{{i}})_- $ and $ (\mathrm{ii})_- $) of Definition \ref{d1.02} (resp. \ref{d1.04}), nor good modular transformation properties, given by Theorem \ref{t1.05} (resp. \ref{t1.06}). It was S. Zwegers, 
who showed in Chapter 1 (resp. 3) of his beautiful paper \cite{Z} 
that, upon adding (in our notation) to 
$ \Phi^{-[\frac{1}{2},\frac{1}{2}]} $  (resp. $ \Phi^{+[m,0]} $) 
a real analytic, but not holomorphic, ``modifier'', the modified function 
does satisfy beautiful elliptic and modular transformation properties. His results were extended to all functions $ \Phi^{\pm[m,s]}, $ described in Example \ref{x2.01}, in \cite{KW3} and \cite{KW4}.

Moreover, we showed in \cite{KW5} that, under some assumptions on $ L $ and $ B, $ the modified mock theta function $ \tilde{\Theta}^{\pm}_{\la;B}, $ by making use of the function $ \tilde{\Phi}^{\pm[m,s]}, $ satisfy nice elliptic and modular transformation properties as well. We used this in \cite{KW5} to show that the character of ``tame integrable'' modules over affine Lie superalgebras have nice modular transformation properties. 

Let us recall the construction of the modified mock theta function $ \tilde{\Phi}^{\pm[m,s]} $ and their transformation properties from \cite{Z}, \cite{KW3}, \cite{KW4}.

For $ x \in \RR, \ t, z \in \CC, \ \Im \tau > 0, \ m \in \half \zp, \ $ and $ n \in \half \ZZ $ let 
\[ E(x) = 2 \int_{0}^{x} e^{-\pi u^2} \ du, \mbox{ and } \psi_{m,n} (\tau, z) = \left(n-2m \frac{\Im z}{\Im \tau}\right) \sqrt{\frac{\Im \tau}{m}}. \]
\noindent For $ m \in \half \zp, \ j \in \half \ZZ $ let 
\begin{equation}
\label{e2.09}
R^{\pm}_{j,m} (\tau, z) = \hspace{-1.5em} \sum_{\substack{ n \in \half \ZZ \\ n \equiv j \! \mod 2 m}} \hspace{-1.5em} (\pm 1)^{\frac{n-j}{2m}} \left(  \mathrm{sign} (n - \thalf - j + 2m) - E(\psi_{m,n} (\tau,z) )        \right) e^{-\frac{\pi i n^2}{2m} \tau + 2 \pi i n z},
\end{equation} 
\noindent and define the \textit{modifier}
\begin{equation}
\label{e2.10}
\Phi^{\pm[m,s]}_{\mathrm{add}} (\tau, z_1, z_2, t) = e^{2 \pi i  mt} \hspace{-1em} \sum_{\substack{j \in s + \ZZ \\ s \leq j < s + 2m}} \hspace{-1em} R^{\pm}_{j,m} \left( \tau, \frac{z_1-z_2}{2}  \right) \Theta^{\pm}_{j,m} (\tau, z_1 + z_2).
\end{equation}

\noindent For $ m \in \half \zp $ and $ s \in \half \ZZ $ define the \textit{modified mock theta function}
\begin{equation}
\label{e2.11}
\tilde{\Phi}^{\pm[m,s]} = \Phi^{\pm [m,s]} - \half \Phi^{\pm [m,s]}_{\mathrm{add}}.
\end{equation}

\noindent It is proved in \cite{KW4}, Corollary 1.6(a), that $ \tilde{\Phi}^{\pm[m,s]} $ remains unchanged if we add  to s  an integer. 

As in [KW3--5], consider the following change of coordinates:
\begin{equation}
\label{e2.12}
z_1 = v - u, \quad z_2 = -v-u, \quad \mbox{i.e. } u = -\frac{z_1 + z_2}{2}, \quad v = \frac{z_1 - z_2}{2}, 
\end{equation}
\noindent and let
\begin{equation}
\label{e2.13}
\varphi^{\pm [m,s]} (\tau, u, v, t) = \Phi^{\pm[m,s]} (\tau, z_1, z_2, t),\,\,
\tilde{\varphi}^{\pm [m,s]} (\tau, u, v, t) =\tilde{\Phi}^{\pm[m,s]} (\tau, z_1, z_2, t).
\end{equation}

The action (\ref{e1.11}), (\ref{e1.12}) of $ SL(2, \RR) $ for $ w=1 $ on functions on $ X $ in the coordinates $ (\tau, u, v, t) $ looks as follows:
\begin{equation}
\label{e2.14}
\varphi \ \big{|}_{\left( \begin{smallmatrix}
a & b \\
c & d \\
\end{smallmatrix} \right)} (\tau, u, v, t) = (c \tau + d)^{-1} \ \varphi \left( \frac{a \tau + b}{c \tau +d}, \frac{u}{c \tau +d}, \frac{v}{c \tau +d}, t - \frac{c(u^2 - v^2)}{c \tau +d} \right).
\end{equation}
\noindent Here and further on we skip $ w $ from notation (\ref{e1.12}) if
$w=1$.

We will use two coordinate systems of the vector space $ \CC \tau + \CC v, $ viewed as a 4-dimensional vector space over $ \RR: $
\begin{equation}
\label{e2.15}
\tau, \,\bar{\tau},\, v,\, \bar{v}
\end{equation}
\noindent and
\begin{equation}
\label{e2.16}
\tau=x+i y,\,\bar{\tau}=x-iy,\, a,\, b, \,\mbox{ where }  \ v = a \tau - b.
\end{equation}
\noindent We have:
\begin{equation}
\label{e2.17}
2x = \tau + \bar{\tau}, \quad 2iy = \tau - \bar{\tau}, \quad 2iya = v - \bar{v}, \quad 2iyb = \bar{\tau} v - \tau \bar{v}.
\end{equation}
\noindent Hence
\begin{equation}
\label{e2.18}
\nabla :=
\frac{\partial}{\partial a} + \tau \frac{\partial}{\partial b} = -2iy \frac{\partial}{\partial \bar{v}}.
\end{equation}
When writing $ \frac{\partial}{\partial \tau}, \frac{\partial}{\partial \bar{\tau}}, \frac{\partial}{\partial v}, \frac{\partial}{\partial \bar{v}} $ (resp. $ D_{\tau}, D_{\bar{\tau}},  \frac{\partial}{\partial a}, \frac{\partial}{\partial b}$), 
we mean partial derivatives with respect to coordinates (\ref{e2.15}) (resp. (\ref{e2.16})).
As in \cite{Z} and \cite{KW4}, coordinates (\ref{e2.16}) play an important role in what follows. 

Introduce the following differential operators:
\begin{equation}
\label{e2.19}
D = 4 \frac{\partial}{\partial t} \frac{\partial}{\partial \tau}+ \left(\frac{\partial}{\partial v} \right)^2 - \left(\frac{\partial}{\partial u}\right)^2, \quad \bar{D} = 
2 \frac{\partial}{\partial t} 
D_{\bar{\tau}} +\left(\frac{\partial}{\partial \bar{v}}\right)^2,
 \quad \Delta = 2a \frac{\partial}{\partial t} \frac{\partial}{\partial \bar{v}} - \frac{\partial}{\partial v} \frac{\partial}{\partial \bar{v}}.
\end{equation}
\noindent Note that, up to a constant factor, $ D $ is the Laplace operator, associated to the bilinear form of Example \ref{x2.01}. We have formulas, similar to (\ref{e1.13}), for $ A \in SL_2 (\RR): $
\begin{equation}
\label{e2.20}
(\bar{D}F) {\big{|}}_A = (c \bar{\tau} +d)^2  \bar{D} 
(F {\big{|}}_A), \quad (\Delta F) {\big{|}}_A = 
|c\tau +d|^2 \Delta (F{\big{|}}_A) \ .
\end{equation}

Let $ m \in \half \zp $ and $ s, s' \in \half \ZZ.  $ Let $ \cF^{[m; s, s']} $ be the space of functions $ F $ on the domain $X$, i.e. functions in $ \tau, u, v, t \in \CC^4, $ such that $ \Im \tau > 0 $,  satisfying the following five conditions: 
\begin{enumerate}
\item[(F1)] $ F(\tau, u, v, t) = e^{2 \pi i m t} \, F(\tau, u,v, 0)$, i.e.
$F$ is a function of degree $m$;
\item[(F2)] $ \vartheta_{11} (\tau, v-u) \vartheta_{11} (\tau, v+u) \, F (\tau, u, v, t)  $ is a holomorphic function in $ u $ and real analytic in $ \tau  $ and $ v; $ $\frac{\partial F}{\partial\bar{v}}(\tau, u, v, t)$ is a real analytic function in all variables;
\item[(F3)] for all $ j,k \in \ZZ  $ one has:
\begin{enumerate}
\item[(i)] $ F(\tau, u+j, v+k, t) = e^{2 \pi i s(j+k)} \, F(\tau, u, v, t), $
\item[(ii)] $ F(\tau, u+ j \tau, v + k \tau, t) = e^{2 \pi i (s' (j+k) + 2m (kv-ju))} q^{m (k^2 - j^2) } \, F(\tau, u, v, t); $
\end{enumerate}
\item[(F4)] for all $ k \in \ZZ $ one has:
\begin{enumerate}
\item[(i)] $ F \left( \tau, u + \frac{k}{2m}, v + \frac{k}{2m}, t \right) = F(\tau, u, v, t), $
\item[(ii)] $ F \left( \tau, u + \frac{k \tau}{2m}, v + \frac{k \tau}{2m}, t  \right) = e^{2 \pi i k (v-u)} \, F (\tau, u, v, t).$
\end{enumerate}
\item[(F5)] $ DF = 0, \ \bar{D}F = 0, \ \Delta F = 0. $
\end{enumerate}

The main results of the paper are the following three theorems. 

\begin{theorem}
\label{t2.2}
Provided that $ m > \half $, and either $s$ or $s'$ is not an integer  if  $m = 1$, one has 
\[ \cF^{[m; s, s']}=\CC  \tilde{\varphi}^{+[m,s]}\, (\hbox{resp.} = \CC\tilde{\varphi}^{-[m,s]} )\,\,\hbox{ if}\,\,  s' \in \ZZ \,\, (\hbox{resp.}\,\,  s' \in \half + \ZZ ).
\]
\end{theorem}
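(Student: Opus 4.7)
The plan is to separate the argument into (a) existence, namely that $\tilde\varphi^{\pm[m,s]}$ actually sits inside $\cF^{[m;s,s']}$, and (b) uniqueness, that any element of $\cF^{[m;s,s']}$ is a scalar multiple of it. For (a), the degree condition (F1) is immediate from (\ref{e2.11}) and (\ref{e2.07}), (\ref{e2.08}). The analyticity condition (F2) follows because the poles of $\Phi^{\pm[m,s]}$ lie along the divisors $v - u \in \ZZ + \tau\ZZ$ and $v + u \in \ZZ + \tau\ZZ$, which are precisely the zeros of $\vartheta_{11}(\tau,v-u)\vartheta_{11}(\tau,v+u)$, and because the modifier $\Phi^{\pm[m,s]}_{\add}$ defined by (\ref{e2.09})--(\ref{e2.10}) is manifestly real analytic and has $\bar v$-derivative a genuine Jacobi theta series times an elementary anti-holomorphic Gaussian. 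The elliptic transformations (F3) and (F4) are the ones recorded in \cite{KW3},\cite{KW4} (and amount to combining the well-known quasi-periodicity of $\Phi^{\pm[m,s]}$ in $z_1,z_2$ with the corresponding transformations of $R^\pm_{j,m}$ and $\Theta^\pm_{j,m}$). For (F5), the operators $D,\bar D,\Delta$ of (\ref{e2.19}) are each computed term-by-term: $D$ annihilates $\Phi^{\pm[m,s]}$ by direct differentiation under the sum in (\ref{e2.07}), and by the heat equation satisfied by $\Theta^\pm_{j,m}$ together with the explicit $\bar v$-dependence of $R^\pm_{j,m}$ through $E(\psi_{m,n})$, the operators $\bar D$ and $\Delta$ also kill $\Phi^{\pm[m,s]}_{\add}$. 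I expect these three calculations to be the content of the first of Lemmas~\ref{L3.1}--\ref{L3.3}.

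For uniqueness, the main idea is to use $\bar D F = 0$ and $\Delta F = 0$ to pin down the anti-holomorphic dependence of $F$ exactly. Given $F\in \cF^{[m;s,s']}$, I would first observe that $\frac{\partial F}{\partial\bar v}$ is real analytic in every variable (this is the reason (F2) asks for that extra clause). Combining $\Delta F = 0$ with (F1) gives $\frac{\partial^2 F}{\partial v \partial \bar v} = 4\pi i m a \,\frac{\partial F}{\partial \bar v}$, while $\bar D F = 0$ gives a parallel equation mixing $\frac{\partial}{\partial\bar\tau}$ and $\frac{\partial^2}{\partial\bar v^2}$. Using the elliptic conditions (F3)(ii) and (F4)(ii) in the $v$-variable, $\frac{\partial F}{\partial\bar v}$ admits a theta-type Fourier expansion in $v+u$, and the two differential equations together with the $u$-quasi-periodicity force the coefficients to be precisely the ones appearing in $\frac{\partial}{\partial\bar v}\tilde\varphi^{\pm[m,s]}$, up to a single overall constant $c$. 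This is the step I expect is formalized by another of the Lemmas~\ref{L3.1}--\ref{L3.3}.

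Setting $G := F - c\,\tilde\varphi^{\pm[m,s]}$ then produces a function in $\cF^{[m;s,s']}$ with $\frac{\partial G}{\partial\bar v}\equiv 0$, i.e.\ holomorphic in $v$. Using $\bar D G = 0$ and (F1) one then deduces holomorphy in $\tau$ as well, so combined with (F2) the function $\vartheta_{11}(\tau,v-u)\vartheta_{11}(\tau,v+u)G$ is holomorphic on $X$ of degree $m+1$, with elliptic behavior dictated by (F3), (F4) and the transformations of $\vartheta_{11}$, and is annihilated by the Laplacian $D$. This places it inside a rank $2$ theta-function space of the type covered by Theorem~\ref{t1.05} (or \ref{t1.06}), and the assumption $(\la|K)=m$ with $m>\tfrac12$ (and the extra non-integrality when $m=1$) makes the relevant $P^\pm_{m;B}\bmod(mL+\CC K)$ so small that the only theta function there compatible with the poles forced by (F4)(ii) is $0$. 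Hence $G=0$ and $F=c\,\tilde\varphi^{\pm[m,s]}$.

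The hard part, and presumably the content of Lemma~\ref{L3.3}, is the middle step: showing that the three scalar PDEs in (F5) together with the elliptic transformation laws (F3), (F4) rigidly determine $\frac{\partial F}{\partial\bar v}$ up to one constant. The case $m=1$, $s,s'\in\ZZ$ must be excluded precisely because there the ``extra room'' in the theta space of Theorem~\ref{t1.05}(a) leaves space for a holomorphic affine superdenominator, which is the content of Theorem~\ref{t2.3}.
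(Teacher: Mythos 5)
Your overall architecture is the same as the paper's: show $\tilde{\varphi}^{\pm[m,s]}\in\cF^{[m;s,s']}$, show that the anti-holomorphic derivative $\frac{\partial F}{\partial\bar v}$ (the paper packages it as $\theta_F=-2iy^{1/2}e^{4\pi ma^2y}\frac{\partial F}{\partial\bar v}$) is pinned down up to one constant by the PDEs and the elliptic laws (Lemmas \ref{L3.1}, \ref{L3.2} and \ref{L4.1}: a double theta expansion in $u$ and $\bar v$ separately, the heat equations forcing constant coefficients, and (F4) collapsing the double sum to a diagonal with equal coefficients), and then analyze the kernel, i.e.\ those $F$ with $\frac{\partial F}{\partial\bar v}=0$ (Lemma \ref{L3.3}). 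Your first two steps are essentially the paper's.

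The gap is in your last step. You claim that $\vartheta_{11}(\tau,v-u)\vartheta_{11}(\tau,v+u)G$ is annihilated by $D$ and hence lands in a rank $2$ theta space covered by Theorem~\ref{t1.05}. Neither half of this works. First, $D$ is a second-order operator, so $D(\tilde\vartheta G)\neq\tilde\vartheta\,DG$ in general: cross terms $\frac{\partial\tilde\vartheta}{\partial u}\frac{\partial G}{\partial u}$ and $\frac{\partial\tilde\vartheta}{\partial v}\frac{\partial G}{\partial v}$ survive, so you cannot conclude that the product is harmonic. Second, and more fundamentally, Theorem~\ref{t1.05} (and \ref{t1.06}) requires the lattice to be positive definite, whereas the lattice relevant to (F3)/(F4) here is $\ZZ\al_1+\ZZ\al_2$ with $|\al_i|^2=0$, $(\al_1|\al_2)=1$ — hyperbolic of signature $(1,1)$ (in the $(u,v)$ coordinates the form is $2(u^2-v^2)$). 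There is no finite-dimensional theta classification in this setting; if there were, one would not need mock theta functions in the first place. The paper's actual argument is a one-variable Liouville argument: with $F_1=\tilde\vartheta G$ holomorphic in $v$ and quasi-periodic as in (\ref{e4.25}), the function $h(v)=\vartheta_{11}(\tau,v)^{2m-2}F_1$ is entire and genuinely doubly periodic in $v$, hence constant, and $h(0)=0$ when $m>1$ since $\vartheta_{11}(\tau,0)=0$; for $m=1$ the function $F_1$ is constant in $v$ and the quasi-periodicity (\ref{e4.26}) forces it to vanish unless $s,s'\in\ZZ$. Your step 3 needs to be replaced by this (or an equivalent negative-degree line bundle) argument; as written it would not go through.
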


\begin{theorem}
\label{t2.3}
\begin{enumerate}
\item[(a)] If $ m = 1 $ and $ s,s' \in \ZZ, $ then 
\[ \cF^{[1;s,s']}  = \CC \tilde{\varphi}^{+[1,0]} \oplus \CC \widehat{R}^A,\] 
where 
 \begin{equation}
 \label{e2.21}
 \widehat{R}^A (\tau, u, v, t) = e^{2 \pi i t} \frac{\eta(\tau)^3 \vartheta_{11} (\tau, 2 u)}{\vartheta_{11} (\tau, v-u) \vartheta_{11}(\tau, v + u)}.  
 \end{equation}
\item[(b)] If $ m = \half, $ then 
\[ \cF^{[\half; s, s']} = \CC \tilde{\varphi}^{+[\half, s]} \oplus \CC \widehat{R}^B_{2s \bmod 2 \ZZ,  0}\,\, \hbox{if}\,\,  s' \in \ZZ, \]
and 
\[ \cF^{[\half; s, s']} = \CC \tilde{\varphi}^{-[\half, s]} \oplus \CC \widehat{R}^B_{2s \bmod 2\ZZ, 1} \,\,\hbox{if}\,\,  s' \in \half + \ZZ,  \]
where 
 \begin{equation}
 \label{e2.22}
 \widehat{R}^B_{ab} (\tau, u, v, t) = e^{\pi i t} \widehat{R}^A (\tau, u , v, 0) \, \frac{\vartheta_{ab} (\tau, v)}{\vartheta_{ab} (\tau, u)},\,\, a,b = 0\, 
\hbox{or}\, 1 . 
 \end{equation}
\end{enumerate}
\end{theorem}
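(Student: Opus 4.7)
The plan is: (i) verify the proposed basis elements lie in $\mathcal{F}^{[m;s,s']}$; (ii) check linear independence; (iii) bound the dimension from above by two, via a reduction to the argument already used for Theorem \ref{t2.2}. For $\tilde{\varphi}^{\pm[m,s]}$ the axioms are established in \cite{KW4}, so the verification in step (i) reduces to checking the auxiliary functions $\widehat{R}^A$ and $\widehat{R}^B_{ab}$.

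The key observation for verification is that both $\widehat{R}^A$ and $\widehat{R}^B_{ab}$ are meromorphic in $\tau, v$, with poles only along the zeros of $\vartheta_{11}(\tau, v \pm u)$. Hence $\partial_{\bar{v}}$ annihilates them, giving $\bar{D}F = \Delta F = 0$ automatically, and (F2) holds by construction. The heat-type condition $DF = 0$ in (F5) follows by combining the heat equation $4\pi i\, \partial_\tau \vartheta_{11}(\tau, z) = \partial_z^2 \vartheta_{11}(\tau, z)$ with the modular weight identity for $\eta^3$; in the character-theoretic interpretation, this is the heat equation for the $\widehat{\sl}_{2|1}$ (resp.\ $\widehat{\osp}_{3|2}$) affine superdenominator. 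The elliptic conditions (F3), (F4) follow from the standard quasi-periodicities $\vartheta_{11}(\tau, z+1) = -\vartheta_{11}(\tau, z)$ and $\vartheta_{11}(\tau, z+\tau) = -q^{-1/2}e^{-2\pi i z}\vartheta_{11}(\tau, z)$ applied to each of the three $\vartheta_{11}$-factors: at the special parameter values $m = 1,\ s, s' \in \ZZ$ (resp.\ $m = 1/2$, using also the transformation rules of $\vartheta_{ab}$ from Remark \ref{r1.10}), the signs and exponential factors from the three factors combine to give exactly the required law. Linear independence is then immediate: $\widehat{R}^A$ (resp.\ $\widehat{R}^B_{ab}$) is holomorphic in $v$ and $\tau$, while $\tilde{\varphi}^{\pm[m,s]}$ has a nonzero $\partial_{\bar{v}}$-derivative coming from the modifier $\Phi^{\pm[m,s]}_{\mathrm{add}}$ of (\ref{e2.10}).

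For the upper bound, given $F \in \mathcal{F}^{[m;s,s']}$, I follow the Section~3 strategy for Theorem \ref{t2.2} based on Lemmas \ref{L3.1}--\ref{L3.3}. The equations $\bar{D}F = \Delta F = 0$, together with (F3), (F4), constrain $\partial F/\partial\bar{v}$ so that, after subtracting a unique scalar multiple of $\tilde{\varphi}^{\pm[m,s]}$, the remainder $G \in \mathcal{F}^{[m;s,s']}$ is holomorphic in $v$ and $\tau$. By (F2) the product $H := \vartheta_{11}(\tau, v-u)\vartheta_{11}(\tau, v+u)\, G$ is then holomorphic in all complex variables. Combining (F3), (F4) for $G$ with the quasi-periodicities of the $\vartheta_{11}$-factors, $H$ satisfies the elliptic transformation laws of a theta function for a rank-$2$ lattice, and $D\vartheta_{11} = 0$ combined with $DG = 0$ yields $DH = 0$. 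By Theorems \ref{t1.05}, \ref{t1.06}, $H$ lies in a finite-dimensional space $\mathrm{Th}_m^{\pm}$ of explicit dimension.

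The main obstacle is the final identification: showing that for the degenerate parameter values in Theorem \ref{t2.3}, the space of admissible $H$ is exactly one-dimensional, spanned by the numerator of $\widehat{R}^A$ (resp.\ $\widehat{R}^B_{ab}$). In the generic situation of Theorem \ref{t2.2}, the lattice quotient $P_m^\pm \bmod (mL + \CC K)$ produces no class compatible with the required shift by $(v-u)$ in (F4)(ii), forcing $H = 0$; at the special values $m = 1,\ s, s' \in \ZZ$ (resp.\ $m = 1/2$) exactly one class survives, and matching its theta-series representative with the numerator of $\widehat{R}^A$ (resp.\ $\widehat{R}^B_{ab}$) via a classical denominator identity completes the argument.
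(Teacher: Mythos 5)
Your overall architecture matches the paper's: verify that the candidate functions satisfy (F1)--(F5), then bound the dimension by reducing a general $F$ modulo a multiple of $\tilde{\varphi}^{\pm[m,s]}$ to a function $G$ with $\partial G/\partial\bar v=0$ and analyzing $\tilde\vartheta G$ where $\tilde\vartheta=\vartheta_{11}(\tau,v-u)\vartheta_{11}(\tau,v+u)$. Steps (i) and (ii) are essentially the paper's (the paper verifies (F5) for $\widehat R^A$, $\widehat R^B_{ab}$ via the denominator identities, which is what your ``character-theoretic interpretation'' amounts to, and linear independence is Remark \ref{r2.7}). The problem is the final identification, which is exactly where the real work lies (Lemma \ref{L3.3}(b),(c) in the paper), and your sketch of it contains two genuine gaps.

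First, the claim that $D\vartheta_{11}=0$ combined with $DG=0$ yields $DH=0$ for $H=\tilde\vartheta G$ is not a valid derivation: $D$ is second order, and $D(\tilde\vartheta G)$ produces cross terms $2(\partial_v\tilde\vartheta\,\partial_vG-\partial_u\tilde\vartheta\,\partial_uG)$ plus a term $G\bigl(8\pi i m\,\partial_\tau\tilde\vartheta+\partial_v^2\tilde\vartheta-\partial_u^2\tilde\vartheta\bigr)$, neither of which vanishes. Moreover, even granting $DH=0$, Theorems \ref{t1.05} and \ref{t1.06} require a positive definite lattice, whereas the bilinear form of Example \ref{x2.01} underlying (F3) is hyperbolic; for $m=1$ the function $F_1=\tilde\vartheta F$ is of degree $0$ in $v$ (elliptic up to the constants $e^{2\pi is}$, $e^{2\pi is'}$), so it is not a rank-$2$ theta function in the sense of Section 1. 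The paper's route is different at this point: Liouville's theorem in the $v$-variable shows $F_1$ is independent of $v$ (and forces $F_1=0$ unless $s,s'\in\ZZ$), after which a rank-$1$ theta argument in the $u$-variable alone (Lemma \ref{L4.8} with $m=\tfrac12$, $j=\tfrac12$, $\alpha=4$) gives $F_1=c(\tau)\,\vartheta_{11}(\tau,2u)$ up to constants. Second, your assertion that the remainder $G$ is holomorphic in $\tau$ is premature: at that stage the coefficient $c(\tau)$ is only real analytic, and identifying it as a constant multiple of $\eta(\tau)^3$ requires applying both $D$ and $\bar D$ to the resulting expression at the very end (and, in the $m=\tfrac12$ case, the additional Lemma \ref{L4.9} to kill the residual $u$-dependence). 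Your proposal leaves the $\tau$-dependence, and hence the precise identification with $\widehat R^A$ and $\widehat R^B_{ab}$, unaddressed.
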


\begin{theorem}
\label{t2.4}
If $F\in \cF^{[m;s,s']} $
, then 
\[F\big{|}_S\in 
\cF^{[m;s',s]} 
,\,\,\hbox{and}\,\,  F\big{|}_T\in \cF^{[m;s,s+s'+m]} \]     
with respect to the action (\ref{e2.14}). Consequently $\cF^{[m;s,s']} $
is $SL_2(\ZZ)$-invariant, provided that $m \equiv s \equiv s'\mod\ZZ$.  
\end{theorem}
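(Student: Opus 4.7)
The plan is to verify directly that each of the defining conditions (F1)--(F5) of $\cF^{[m;s,s']}$ is preserved by the weight-$1$ right actions of $S$ and $T$, with the claimed parameter shifts; since $S,T$ generate $SL_2(\ZZ)$, the final $SL_2(\ZZ)$-invariance then follows from a short congruence check.

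For the $T$-action $F|_T(\tau,u,v,t)=F(\tau+1,u,v,t)$, conditions (F1), (F3)(i), (F4), (F5), and (F2) transfer with no change in parameters, since $\tau\mapsto\tau+1$ commutes with the operators $D,\bar D,\Delta$, preserves holomorphy and real-analyticity, and multiplies $\vartheta_{11}(\tau,\cdot)$ only by a nonvanishing constant. The only substantive step is (F3)(ii): writing $u+j\tau=(u-j)+j(\tau+1)$ and $v+k\tau=(v-k)+k(\tau+1)$, I apply (F3)(ii) of $F$ at $\tau+1$ and then strip the integer shifts using (F3)(i) with the original parameter $s$. Three exponential factors arise; combining them one uses $(q')^{m(k^2-j^2)}=q^{m(k^2-j^2)}\cdot e^{2\pi im(k^2-j^2)}$ together with the identity $e^{2\pi im(k^2-j^2)}=e^{2\pi im(j+k)}$ in $\CC^*$, valid for $m\in\half\ZZ$, $j,k\in\ZZ$ (because $(k+j)(k-j-1)$ is always even). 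Collecting modulo $\ZZ$ yields $\tilde s'\equiv s'-s+m\equiv s+s'+m\pmod\ZZ$ (the second congruence uses $2s\in\ZZ$), matching the claim.

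For the $S$-action $F|_S(\tau,u,v,t)=\tau^{-1}F(-1/\tau,u/\tau,v/\tau,t-(u^2-v^2)/\tau)$, the key structural observation is that in the new variable $\tau'=-1/\tau$, an integer shift in $(u,v)$ becomes a $\tau'$-shift in $(u/\tau,v/\tau)$ (since $j/\tau=-j\tau'$), and vice versa; this is what produces the swap $(s,s')\to(s',s)$. Concretely, to verify (F3)(ii) for $F|_S$, I apply (F3)(i) of $F$ at $\tau'$ to the shifted arguments $(u/\tau+j,v/\tau+k)$, absorb the $t$-shift from $((u+j\tau)^2-(v+k\tau)^2)/\tau$ using (F1), and collect all exponential factors; the $1/\tau$-proportional terms cancel exactly, leaving $e^{2\pi i(s(j+k)+2m(kv-ju))}q^{m(k^2-j^2)}$, so the new second parameter is $s$. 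The verifications of (F3)(i), (F4)(i), (F4)(ii) are entirely analogous. Property (F2) is preserved because $\vartheta_{11}(-1/\tau,z/\tau)$ equals $\vartheta_{11}(\tau,z)$ times an invertible holomorphic factor, and the change of variables is real-analytic with real-analytic inverse. Property (F5) is preserved by the conjugation formulas (\ref{e1.13}) (with $w=\ell/2=1$) and (\ref{e2.20}), which give $(DF)|_A=(c\tau+d)^2 D(F|_A)$ and analogous statements for $\bar D,\Delta$.

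For the concluding invariance statement, if $m\equiv s\equiv s'\pmod\ZZ$, then under $S$ the pair $(s,s')\bmod\ZZ$ is merely swapped and thus unchanged, while under $T$ it becomes $(s,s+s'+m)\equiv(s,3m)\equiv(s,m)\equiv(s,s')\pmod\ZZ$; since $S,T$ generate $SL_2(\ZZ)$, the space $\cF^{[m;s,s']}$ is $SL_2(\ZZ)$-invariant. The main obstacle throughout is careful bookkeeping: verifying that in each case the various exponentials from the coordinate change, the degree-$m$ condition (F1), the elliptic axioms, and the $q'$-to-$q$ correction for $T$ combine to leave only the expected $(j,k)$-linear exponents with no stray $u,v,\tau$-dependence. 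These cancellations work precisely because (F1) rigidly couples the $t$-dependence to the $SL_2(\RR)$-induced shifts in $u,v$.
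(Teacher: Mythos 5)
Your proof is correct and follows essentially the same route as the paper's: a direct verification that each axiom (F1)--(F5) transforms as claimed under the generators $S$ and $T$, with the swap $(s,s')\mapsto(s',s)$ for $S$ coming from the interchange of integer shifts and $\tau$-shifts, and (F5) handled via (\ref{e1.13}) and (\ref{e2.20}). (One harmless slip: the intermediate exponent in your $T$-computation should be $s'-s-m$ rather than $s'-s+m$, but since $2s,2m\in\ZZ$ both are congruent to $s+s'+m$ modulo $\ZZ$, which is all that matters for membership in $\cF^{[m;s,s+s'+m]}$.)
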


\begin{remark}
\label{r2.5}
The function $ \widehat{R}^A $ is the superdenominator for the affine Lie superalgebra $ \widehat{s \ell}_{2|1}, $ and the functions $ \widehat{R}^B_{ab} $ 
are the untwisted and twisted denominator and superdenominator for $ \widehat{\osp}_{3|2}, $ see \cite{KW4}. Since, by the product decompositions of the function $\vartheta_{ab}$ (see e.g. \cite{M} or \cite{KW4}), one has 
\[ \prod_{a,b = 0,1} \vartheta_{ab} (\tau, u) = \eta (\tau)^3 \vartheta_{11} (\tau, 2 u), \]
\noindent the functions $ \widehat{R}^B_{ab} $ satisfy condition (F2).
\end{remark}
  \begin{remark}
 \label{r2.6}
 Property (F3) is equivalent to the following two properties: 
\[  F(p_{\al}(h)) = (-1)^{s|\al|^2}F(h)  \mbox{ and }  F(t_{\al} (h)) = (-1)^{s' |\al|^2} F(h)\,\, \mbox{ for all }\,\,\al= n_1 \al_1 + n_2 \al_2,  \]
 where $n_1, n_2\in \ZZ$ and
$ \al_1 $,  $ \al_2 $ are as in Example \ref{x2.01}, cf. Remark \ref{r1.03}. 
Also, property (F4) is equivalent to the following:
\[  F(p_{\al}(h)) = F(h)  \mbox{ and }  F(t_{\al} (h)) = F(h)\,\, \mbox{ for all }\,\,\al=\frac{n}{m} \al_1, \ n\in \ZZ .  \]
Thus, the modified mock theta functions have more symmetries than the mock theta functions.
 \end{remark}
  \begin{remark}
 \label{r2.7}
 It follows from Theorems \ref{t2.2} and \ref{t2.3} that if 
$ f \in \cF^{[m;s,s']} $  is a real analytic function in $ \tau, u, v, $ then $ f = 0. $ Indeed the functions $ \tilde{\varphi}^{\pm[m,s] } \tuvt$ (resp. $ \widehat{R}^A \tuvt $ and  $ \widehat{R}_{ab}^B \tuvt $) have singularities at the points $ v-u \in \ZZ + \tau \ZZ $ (resp. $ v \pm u \in \ZZ + \tau \ZZ $). It follows that if $ \varphi \tuvt $ is a function of degree $ m $ and $ f_i \in \cF^{[m;s,s']}, \ i = 1,2, $ are such that $ f_i - \varphi $ are real analytic functions, then $ f_1 = f_2. $
 \end{remark}
  \begin{remark}
 \label{r2.8}
 In Definition 3.2 of \cite{Z}, Zwegers introduces function $ f_v (u; \tau ) $ and in Definition 3.4 their modifications $ \tilde{f}_v (u; \tau). $ One has:
 \[ e^{2 \pi i m t} f_v (u;\tau) = -\varphi^{+[m,1]} \tuvt. \]
 \noindent Furthermore, it follows from Remark \ref{r2.7} that
 \[ e^{2 \pi i m t} \tilde{f}_v (u;\tau) = - \tilde{\varphi}^{+[m,1]} \tuvt \ ( = - \tilde{\varphi}^{+[m,0]} \tuvt). \]
 \end{remark}
  \begin{remark}
 \label{r2.9}
 If we replace condition (F2) by a stronger condition 
 \begin{enumerate}
 \item[$ (\mathrm{F2})' $] $ \vartheta_{11} (\tau, v-u) F \tuvt $ is holomorphic in $ u $ and real analytic in $ \tau  $ and $ v, $ and $\frac{\partial F}{\partial \bar{v}}$ is a real analytic function in all variables,
 \end{enumerate}
 \noindent then the corresponding space of functions $ \cF'^{[m;s.s']} $ is spanned by $ \tilde{\varphi}^{\pm[m,s]} \tuvt, $ where $ + $ (resp. $ - $) corresponds to $ s' \in \ZZ $ (resp. $ \in \half + \ZZ $).
 \end{remark}


\begin{remark}
\label{r2.10}
The operator $ D_{\bar{\tau}} = \frac{\partial}{\partial \bar{\tau}} +a\frac{\partial}{\partial \bar{v}} $ satisfies the following commutation relations (recall that $ a = \frac{v - \bar{v}}{\tau - \bar{\tau}} $):
\[\left [ D_{\bar{\tau}}, \frac{\partial}{\partial \tau}\right] = \frac{a}{\tau - \bar{\tau}}\frac{\partial}{\partial \bar{v}}, \quad 
\left[ D_{\bar{\tau}}, \frac{\partial}{\partial \bar{\tau}}\right ] = - \frac{a}{\tau - \bar{\tau}}\frac{\partial}{\partial \bar{v}}, \ 
\left[D_{\bar{\tau}}, \left( \frac{\partial}{\partial v}\right)^2 \right] = \frac{-2}{\tau - \bar{\tau}} \frac{\partial}{\partial v} \frac{\partial}{\partial \bar{v}}, \]
\[ \left[D_{\bar{\tau}}, 
\left( \frac{\partial}{\partial \bar{v}}\right)^2 \right] = \frac{2}{\tau - \bar{\tau}} \left( \frac{\partial}{\partial \bar{v}}\right)^2 \ , \,
 \left[ D_{\bar{\tau}}, \frac{\partial}{\partial v} \frac{\partial}{\partial \bar{v}}\right] = \frac{1}{\tau - \bar{\tau}} \frac{\partial}{\partial \bar{v}} \left( \frac{\partial}{\partial v } - \frac{\partial}{\partial \bar{v}} \right) \ . \]
From these formulas we obtain the following commutation relations on the space of functions of degree $ m: $
\[ [\bar{D}, D] = \frac{16 \pi i m}{\tau - \bar{\tau}} \Delta, \quad [\Delta, D] = \frac{8 \pi i m }{\tau - \bar{\tau}} \Delta, \quad [\Delta, \bar{D}] = - \frac{8 \pi i m }{\tau - \bar{\tau}} \Delta \ . \]
It follows that condition (F5) can be replaced by 
\begin{enumerate}
\item[(F5)$'$] $ DF = 0, \ \bar{D}F = 0. $ 
\end{enumerate}
\end{remark}
 
While the proof of Theorems \ref{t2.2} and \ref{t2.3} is rather involved, and will be given in the next sections, the proof of Theorem \ref{t2.4} is straightforward from the definition of the spaces $\cF^{[m;s,s']}$, and it is given here.  
\begin{proof}[Proof of Theorem 2.4]
By equation (\ref{e2.14}) we have for $ F $ of degree $ m: $
\begin{equation}
\label{e2.23}
F \big{|}_S (\tau, u,v,t) = \frac{1}{\tau} e^{\frac{2 \pi i m}{\tau} (v^2 - u^2)} F \left( -\frac{1}{\tau}, \frac{u}{\tau}, \frac{v}{\tau}, t \right)
\end{equation}
\noindent Using this, condition (F3)(i) gives
\begin{equation}
\label{e2.24}
F \big{|}_S (\tau, u + j \tau, v + k \tau, t) = e^{2 \pi i s (j+k)} e^{4 \pi i m (kw - ju)} q^{m(k^2 - j^2)} F \big{|}_S (\tau, u, v, t)
\end{equation}
Next, replacing $ j,k $ by $ -j, -k $ and $ (\tau, u ,v ) $ by $ (-\frac{1}{\tau}, \frac{u}{\tau}, \frac{v}{\tau}) $ in (F3)(ii), we obtain:
\begin{equation}
\label{e2.25}
F \big{|}_S (\tau, u +j, v+ k, t) = e^{2 \pi i s' (j+k)} F \big{|}_S (\tau, u, v, t).
\end{equation}
\noindent Formulas (\ref{e2.24}) and (\ref{e2.25}) mean that if $ F  $ satisfies conditions (F1) and (F3), then $ F \big{|}_S $ satisfies (F3) for $ \cF^{[m;s',s]}. $

Similarly one shows that if $ F $ satisfies conditions (F1) and (F4), then $ F \big{|}_S $ satisfies condition (F4). Also, by (\ref{e1.13}) and (\ref{e2.20}), $ F \big{|}_S $ satisfies condition (F5) if $ F $ does, proving the first part of the theorem. 

In order to prove the second part, it suffices to show that if $ F(\tau, u, v, t) $ of degree $ m $ satisfies condition (F3)(ii), then $ F (\tau + 1, u, v, t) $ satisfies that condition with $ s' $ replaced by $s+ s' + m. $ This is straightforward. 
\end{proof}

 \section{Proof of Theorems \ref{t2.2} and \ref{t2.3}.}
 In order to prove Theorem \ref{t2.2} and \ref{t2.3}, consider the space $ \cG^{[m;s,s']}   $ of functions $ G(\tau, u,v,t), $ satisfying the following conditions:
 \begin{enumerate}
 \item[(G1)] $ G(\tau, u,v, t) = e^{2 \pi i m t} G(\tau, u, v, 0); $
 \item[(G2)] $ G(\tau, u, v, 0) $ is a holomorphic function in $ u $ and $ \bar{v}, $ and real analytic in $ \tau,\, \Im \tau > 0; $
 \item[(G3)] for all $ j, k \in \ZZ  $ one has
 \begin{enumerate}
 \item[(i)] $ G(\tau, u+j, v+k, t ) = e^{2 \pi i s (j+k)} G(\tau, u, v, t), $
 \item[(ii)] $ G(\tau, u + j \tau, v + k \tau, t) = e^{2 \pi i (s' (j+k) + 2m (k \bar{v}-ju))} \ e^{2 \pi i m (k^2 \bar{\tau}- j^2 \tau )} G(\tau, u, v, t);  $
 \end{enumerate}
 \item[(G4)] for all $ k \in \ZZ $ one has:
 \begin{enumerate}
 \item[(i)] $ G \left( \tau, u + \frac{k}{2m}, v + \frac{k}{2m}, t  \right) = G(\tau, u, v, t), $
 \item[(ii)] $ G \left( \tau, u + \frac{k \tau}{2m}, v + \frac{k \tau}{2m}, t  \right) = e^{2 \pi i k (\bar{v} - u)} \ e^{\frac{\pi i k^2}{2m} (\bar{\tau} - \tau)} G(\tau, u, v, t). $
 \end{enumerate}
 \item[(G5)] $ DG = 0, \quad \left( 4 \frac{\partial }{\partial t} \frac{\partial }{\partial \bar{\tau}} + \left(\frac{\partial}{\partial \bar{v}}\right)^2  \right) G = 0. $
 \end{enumerate}
 
 Next, we construct a linear map $ F \mapsto \theta_F $ from the space $ \cF^{[m;s,s']} $ to the space $ \cG^{[m;s,s']}, $ show that $ \tilde{\varphi}^{\pm[m,s]} $ lies in $ \cF^{[m;s,s']}, $ where $ + $ (resp. $ - $) corresponds to $ s' \in \ZZ $ (resp. $ \in \half + \ZZ $), and that the image of this function spans $ \cG^{[m;s,s']}. $ The last step of the proof is the study of the kernel of this map.
 
 Given a differentiable in $ \bar{v} $ function $ F = F(\tau, u, v, t) $ of degree $ m $ in the domain $ X $ (i.e. $ \tau, u, v, t \in \CC, \Im \tau > 0 $), let 
 \begin{equation}
 \label{e3.1}
 \theta_F (\tau, u,v,t) = y^{-\half} e^{4 \pi  m a^2 y} \nabla F = -2iy^{\half} e^{4 \pi  m a^2 y} \frac{\partial F }{\partial \bar{v}},
 \end{equation}
 \noindent where $ \nabla $ is the operator, defined in (\ref{e2.18}). We shall prove that
 \begin{equation}
 \label{e3.2}
  \theta_{\tilde{\varphi}^{\pm [m,s]}} =-2 \sqrt{m} \ \theta^{\pm[m,s]}, 
 \end{equation}
 where
 \begin{equation}
 \label{e3.3}
 \theta^{\pm[m,s]} (\tau, u, v, t) = e^{2 \pi i m t} \hspace{-2em}\sum_{j \in s + \ZZ \bmod 2m \ZZ} \hspace{-2em} \Theta^{\pm}_{j,m} (- \bar{\tau}, 2 \bar{v}) \Theta^{\pm}_{-j,m} (\tau, 2u).
 \end{equation}
 
 Theorems \ref{t2.2} and \ref{t2.3} follow easily from the following lemmas, where $ m \in \half \zp, \ s, s' \in \half \ZZ.  $
 \begin{lemma}
 \label{L3.1}
 For $ F \in \cF^{[m;s,s']} $ we have: $ \theta_F \in \cG^{[m;s,s']}. $
 \end{lemma}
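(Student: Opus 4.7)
The plan is to verify conditions (G1)--(G5) in turn for
$$\theta_F = -2iy^{1/2}e^{4\pi m a^2 y}\,\partial_{\bar v}F,$$
under the assumption that $F$ satisfies (F1)--(F5). Condition (G1) is immediate since the prefactor depends only on $\tau,\bar\tau,v,\bar v$ (not on $t$) and $\partial_{\bar v}$ preserves degree. Holomorphy of $\theta_F$ in $u$ (part of (G2)) is clear because the prefactor does not involve $u$ and, by (F2), $\partial F/\partial\bar v$ is real analytic on all of $X$ (in particular it has no poles along the divisors $v\pm u\in\ZZ+\tau\ZZ$); real analyticity in $\tau$ is inherited likewise.

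The less immediate part of (G2) is the holomorphy of $\theta_F$ in $\bar v$, i.e.\ $\partial_v\theta_F=0$. A direct calculation using $\partial_v a=1/(2iy)$ yields $\partial_v(e^{4\pi m a^2 y})=-4\pi i m a\,e^{4\pi m a^2 y}$, so
$$\partial_v\theta_F = -2iy^{1/2}e^{4\pi m a^2 y}\!\left(\partial_v\partial_{\bar v}F - 4\pi i m a\,\partial_{\bar v}F\right).$$
Because $F$ has degree $m$, the expression in parentheses equals $-\Delta F$, which vanishes by (F5).

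Conditions (G3) and (G4) will be verified by direct computation. Under integer shifts $(u,v)\mapsto(u+j,v+k)$ the quantity $a$ is unchanged, so the prefactor is preserved and (G3)(i), (G4)(i) follow at once from (F3)(i), (F4)(i). Under $(u,v)\mapsto(u+j\tau,v+k\tau)$, one has $a\mapsto a+k$, so the prefactor acquires $\exp(8\pi m aky+4\pi mk^2y)$; combining this with (F3)(ii) and using $v-\bar v=2iya$, $\tau-\bar\tau=2iy$, the exponents collapse to exactly the rule demanded by (G3)(ii). The quasi-periodicities (G4) are treated analogously.

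The main work is (G5). The strategy is to expand $D\theta_F$ and $\bar D'\theta_F:=(4\partial_t\partial_{\bar\tau}+\partial_{\bar v}^2)\theta_F$ via Leibniz, using the elementary identities $\partial_{\bar v}a=-1/(2iy)$, $\partial_{\bar\tau}y=i/2$, $\partial_{\bar\tau}a=a/(2iy)$. Each resulting expression decomposes into a multiple of $DF$, $\bar DF$, or $\Delta F$ (all zero by (F5)) plus cross-terms arising from differentiating the prefactor. The content of the lemma is that, thanks to the particular choice of prefactor $y^{-1/2}e^{4\pi m a^2 y}$ and of the operator $\nabla=-2iy\,\partial_{\bar v}$, these cross-terms must cancel. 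This cancellation, streamlined by the commutation relations recorded in Remark~\ref{r2.10}, is where the bulk of the computation lies and will be the main technical obstacle.
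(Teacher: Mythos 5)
Your overall route is the same as the paper's: the paper proves Lemma~\ref{L3.1} by quoting Lemma~\ref{L4.4} (for (G2) and (G5)) and Lemma~\ref{L4.5} (for (G3) and (G4)), and everything you actually carry out matches those lemmas. In particular your identity $\partial_v\theta_F=-2iy^{1/2}e^{4\pi m a^2 y}\bigl(\partial_v\partial_{\bar v}F-4\pi i m a\,\partial_{\bar v}F\bigr)=2iy^{1/2}e^{4\pi m a^2y}\Delta F$ is exactly Lemma~\ref{L4.4}(a), and your bookkeeping for the quasi-periodicity (the shift $a\mapsto a+k$, the factor $e^{8\pi m aky+4\pi mk^2y}$, and the collapse $e^{4\pi i mkv}e^{8\pi maky}=e^{4\pi imk\bar v}$, $q^{mk^2}e^{4\pi mk^2y}=e^{2\pi imk^2\bar\tau}$) reproduces Lemma~\ref{L4.5}(b) correctly.

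The genuine gap is (G5), which you explicitly leave as ``the main technical obstacle.'' This is not a routine cancellation that can be waved through: the identity $D\theta_F=\theta_{DF}$ is \emph{false} for a general prefactor and even for this prefactor it does not hold unconditionally --- the Leibniz expansion of $\partial_v^2\theta_F$ produces the mixed-derivative term $-8\pi i m\,y^{1/2}a\,e^{4\pi ma^2y}\,\partial_v\partial_{\bar v}F$, which is not a multiple of $DF$, $\bar DF$, or $\Delta F$ by itself; one must invoke $\Delta F=0$ to replace $\partial_v\partial_{\bar v}F$ by $4\pi i m a\,\partial_{\bar v}F$, and only then do the terms proportional to $y^{-1/2}e^{4\pi ma^2y}\partial_{\bar v}F$ and $y^{1/2}a^2e^{4\pi ma^2y}\partial_{\bar v}F$ cancel against those coming from $8\pi i m\,\partial_\tau\theta_F$. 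The paper does this in Lemma~\ref{L4.4}(b),(c), equations (\ref{e4.18})--(\ref{e4.22}); note also that the $\bar D$ case requires the decomposition $\partial_{\bar\tau}=D_{\bar\tau}-a\partial_{\bar v}$ from (\ref{e4.17}) to reconcile the operator $4\partial_t\partial_{\bar\tau}+\partial_{\bar v}^2$ of (G5) with the operator $\bar D=2\partial_t D_{\bar\tau}+\partial_{\bar v}^2$ of (F5). Your proposal identifies the right strategy and the right elementary identities, but until these two computations are actually performed the proof is incomplete, since (G5) is precisely the property that forces the coefficients $c_{n,n'}(\tau)$ in Lemma~\ref{L4.1} to be constants and hence drives the whole classification.
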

 
 \begin{lemma}
 \label{L3.2}
 \begin{enumerate}
 \item[(a)] $ \cG^{[m;s.s']} = \CC \theta^{+[m,s]} $ (resp. $=\CC  \theta^{-[m,s]} $) if $ s' \in \ZZ $ (resp. $ \in \half + \ZZ $),
 \item[(b)]  $ \tilde{\varphi}^{\pm [m,s]}  \in \cF^{[m;s,s']},$ where $ + $ (resp. $ - $) corresponds to $ s' \in \ZZ $ (resp. $ s' \in \half + \ZZ $).
 \item[(c)] Formula (\ref{e3.2}) holds.  
  
  \end{enumerate}
 \end{lemma}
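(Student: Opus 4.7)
I would attack the three parts in the order (b), (c), (a): (b) is essentially a recompilation of known transformation laws for $\tilde{\Phi}^{\pm[m,s]}$; (c) is a direct computation that identifies $\theta_{\tilde\varphi^{\pm[m,s]}}$; and (a) classifies $\cG^{[m;s,s']}$ by a theta expansion whose one-dimensional span must be matched with the generator furnished by (c).

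For (b), I would invoke the modular and elliptic transformation properties of $\tilde{\Phi}^{\pm[m,s]}$ established in \cite{Z}, \cite{KW3}, \cite{KW4}. After the substitution (\ref{e2.12}), these translate directly into (F3) and (F4). Property (F1) is immediate from the overall factor $e^{2\pi i m t}$ in (\ref{e2.07})--(\ref{e2.10}). For (F2), the factor $\vartheta_{11}(\tau,v-u)$ cancels the simple pole of $\Phi^{\pm[m,s]}$ along $v-u\in\ZZ+\tau\ZZ$ (and no other poles appear), while $\Phi^{\pm[m,s]}_{\mathrm{add}}$ is real analytic; real-analyticity of $\partial F/\partial\bar v$ in all variables is a consequence of the explicit formula produced in (c). For (F5), the vanishing $D\tilde{\varphi}^{\pm[m,s]}=0$ splits into the classical heat equation for $\Theta^{\pm}_{j,m}(\tau,-2u)$ and a companion heat equation for $R^{\pm}_{j,m}(\tau,v)$, each verified term-by-term from (\ref{e2.09}) and (\ref{e1.10}); similarly $\bar D\tilde{\varphi}^{\pm[m,s]}=0$ reduces to a known heat identity for $R^{\pm}_{j,m}$ (and is automatic on the meromorphic part $\Phi^{\pm[m,s]}$). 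Remark~\ref{r2.10} then yields $\Delta\tilde{\varphi}^{\pm[m,s]}=0$ for free.

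For (c), since $\Phi^{\pm[m,s]}$ is meromorphic in $v$, only the additive modifier contributes to $\partial/\partial\bar v$. In the coordinates (\ref{e2.12}), $(z_1-z_2)/2=v$ and $z_1+z_2=-2u$, so the modifier is $e^{2\pi i mt}\sum_j R^{\pm}_{j,m}(\tau,v)\Theta^{\pm}_{j,m}(\tau,-2u)$, and only $E(\psi_{m,n}(\tau,v))$ inside $R^{\pm}_{j,m}$ depends on $\bar v$. A short computation gives $\partial E(\psi_{m,n})/\partial\bar v=-2i\sqrt{m/y}\,\exp(-\pi(n-2ma)^2 y/m)$. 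Multiplying by $-2iy^{1/2}e^{4\pi m a^2 y}$ (per (\ref{e3.1})) and the factor $-\half$ from (\ref{e2.11}), the exponent collapses via the identities $-\pi y n^2/m-\pi i n^2\tau/(2m)=-\pi i n^2\bar\tau/(2m)$ and $4\pi y n a+2\pi i n v=2\pi i n\bar v$ into exactly the general summand of $\Theta^{\pm}_{j,m}(-\bar\tau,2\bar v)$ upon writing $n=j+2mk$ with $k\in\ZZ$. Using $\Theta^{\pm}_{j,m}(\tau,-2u)=\Theta^{\pm}_{-j,m}(\tau,2u)$ (from $n\mapsto -n$ in (\ref{e1.10})) one recovers (\ref{e3.3}) with overall coefficient $-2\sqrt m$.

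For (a), I would first argue that any $G\in\cG^{[m;s,s']}$ admits a unique expansion
$G=e^{2\pi i mt}\sum_j c_j(\tau,\bar\tau)\,\Theta^{\pm}_{j,m}(-\bar\tau,2\bar v)\,\Theta^{\pm}_{-j,m}(\tau,2u)$, where $j$ runs over representatives of $s+\ZZ$ (resp.\ $s+\half+\ZZ$) modulo $2m\ZZ$. This is forced by holomorphy in $u$ and in $\bar v$ together with (G3), (G4), by the same reasoning as in Theorem~\ref{t1.05}(a), applied separately in the $u$- and $\bar v$-variables. Next, since $\Theta^{\pm}_{-j,m}(\tau,2u)$ satisfies $8\pi i m\,\partial_\tau\Theta=\partial_u^2\Theta$ and $\Theta^{\pm}_{j,m}(-\bar\tau,2\bar v)$ satisfies $8\pi i m\,\partial_{\bar\tau}\Theta+\partial_{\bar v}^2\Theta=0$, the two equations in (G5) reduce precisely to $\partial_\tau c_j=\partial_{\bar\tau}c_j=0$, so each $c_j$ is constant. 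Finally, the shift by $\tau/(2m)$ in (G4)(ii), combined with the second formula of (\ref{e1.15}), cyclically permutes the pair $\Theta^{\pm}_{j,m}(-\bar\tau,2\bar v)\Theta^{\pm}_{-j,m}(\tau,2u)\mapsto \Theta^{\pm}_{j-1,m}(-\bar\tau,2\bar v)\Theta^{\pm}_{-(j-1),m}(\tau,2u)$ up to exactly the automorphy factor prescribed by (G4)(ii), forcing $c_{j+1}=c_j$ for all $j$ and hence $\cG^{[m;s,s']}=\CC\,\theta^{\pm[m,s]}$. The main obstacle lies in this last step of (a): although the formal algebra parallels that of ordinary theta functions, one must carefully handle the simultaneous $(u,\tau)$- and $(\bar v,\bar\tau)$-dependence, verify that (G5) is exactly what is needed to kill the dependence of the coefficients on $\tau$ and $\bar\tau$, and match the cyclic shifts with the correct sign conventions in the $\pm$ case.
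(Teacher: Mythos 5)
Your overall architecture coincides with the paper's: part (a) is the Fourier-expansion argument of the paper's Lemma \ref{L4.1} (expansion in $u$, then in $\bar v$, constancy of the coefficients via the two heat equations in (G5), and the cyclic identification of the coefficients via (G4)(ii) and (\ref{e1.15})); part (c) is the same computation as the paper's, which reduces to the identity $\frac{\partial}{\partial\bar v}R^{\pm}_{j,m}(\tau,v)=2if_m(\tau,v)\Theta^{\pm}_{j,m}(-\bar\tau,2\bar v)$ (the paper's (\ref{e4.13}), quoted from [KW4]; your direct derivation of it and your exponent bookkeeping are correct); and part (b) defers (F3), (F4) to the transformation laws of [Z], [KW3]--[KW5] exactly as the paper does in Lemmas \ref{L4.2} and \ref{L4.7}. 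The one genuinely different step is your treatment of $D\tilde\varphi^{\pm[m,s]}=0$: you propose to verify termwise that $R^{\pm}_{j,m}$ satisfies the companion heat equation $\bigl(8\pi im\frac{\partial}{\partial\tau}+\bigl(\frac{\partial}{\partial v}\bigr)^2\bigr)R^{\pm}_{j,m}=0$, which indeed holds (the contributions of the $\mathrm{sign}$ and $E(\psi_{m,n})$ pieces of each combined summand cancel, and the differentiated series converges absolutely since only the difference $\mathrm{sign}-E$ decays), whereas the paper's Lemma \ref{L4.3} avoids this computation: it only shows $\frac{\partial}{\partial\bar v}D\varphi^{\pm[m,s]}_{\mathrm{add}}=0$, so that $D\tilde\varphi^{\pm[m,s]}$ is holomorphic in $v$, and then kills it by an ellipticity-plus-Liouville argument with $\vartheta_{11}(\tau,v)^{2m}$. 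Your route is more direct but requires justifying termwise differentiation of the non-holomorphic series (and you cannot split $\mathrm{sign}$ from $E$, since each piece separately diverges); the paper's route trades that for a soft argument. Two small points to tighten: in (a) the Fourier expansion only yields the inclusion $\cG^{[m;s,s']}\subset\CC\,\theta^{\pm[m,s]}$, and the equality needs the reverse inclusion, which follows from (b), (c) and Lemma \ref{L3.1} (the paper makes this step explicit); and for (F4)(ii) the paper does not deduce it from the elliptic tables directly but from (F4)(i) together with the $S$-invariance of $\tilde\varphi^{\pm[m,s]}$, so you should indicate which reference supplies that transformation.
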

 
 \begin{lemma}
\label{L3.3}
Let $ F \in \cF^{[m;s,s']} $ be such that $ \theta_F = 0. $
\begin{enumerate}
\item[(a)] If $ m > 1, $ or $ m = 1 $ and $ (s,s') \notin \ZZ^2, $ then $ F = 0. $
\item[(b)] If $ m = 1 $ and $ s, s' \in \ZZ,  $ then $ F \in \CC \widehat{R}^A. $
\item[(c)] If $ m = \half, $ then $ F \in \CC \widehat{R}^B_{2s\bmod 2 \ZZ, \ 2s' \bmod 2 \ZZ} $.
\end{enumerate}
 \end{lemma}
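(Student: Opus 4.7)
The plan is to use $\theta_F=0$ to reduce $F$ to a meromorphic object in $(\tau,u,v)$, form the theta-cleared function $H:=\vartheta_{11}(\tau,v-u)\,\vartheta_{11}(\tau,v+u)\,F$, and classify the resulting entire Jacobi-type $H$ via standard theta-function methods. First, the condition $\theta_F=0$ is, by (\ref{e3.1}), equivalent to $\partial F/\partial\bar v=0$. Since $F$ is of degree $m$, the operator $\bar D$ acts on $F$ as $4\pi i m\,\partial/\partial\bar\tau+(\partial/\partial\bar v)^2$, so (F5) will then force $\partial F/\partial\bar\tau=0$ as well, and $\Delta F=0$ becomes automatic. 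Combined with (F2), this shows that
\[
H(\tau,u,v,t):=\vartheta_{11}(\tau,v-u)\,\vartheta_{11}(\tau,v+u)\,F(\tau,u,v,t)
\]
is entire in $(u,v)$, holomorphic in $\tau$ on $\Im\tau>0$, and of degree $m$ in $t$.

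Using the standard transformation laws of $\vartheta_{11}$ together with (F3), I would compute that, with $u$ fixed, $H(\tau,u,\cdot,t)$ is an entire Jacobi-type function of $v$ of index $1-m$ with multiplier $(e^{2\pi i s},e^{2\pi i s'})$, and, with $v$ fixed, of $u$ of index $m+1$; (F4) contributes further constraints under the diagonal shifts $(u,v)\mapsto(u+\tfrac{k}{2m},v+\tfrac{k}{2m})$ and $(u,v)\mapsto(u+\tfrac{k\tau}{2m},v+\tfrac{k\tau}{2m})$.

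Part (a) is then immediate: if $m>1$ the index in $v$ is negative and $H\equiv0$; if $m=1$ and $(s,s')\notin\ZZ^2$ the index is zero but the multiplier is non-trivial, again $H\equiv0$. For part (b), $m=1$ with $(s,s')\in\ZZ^2$: $H$ will be $v$-independent, and as a function of $u$ will satisfy $H(u+\tfrac12)=-H(u)$ and $H(u+\tfrac{\tau}{2})=-q^{-1/2}e^{-4\pi i u}H(u)$ (from (F4)(i), (ii) together with $H$'s $v$-independence); a Liouville argument on the elliptic curve $\CC/(\tfrac12\ZZ+\tfrac{\tau}{2}\ZZ)$ shows this space is one-dimensional, spanned by $\vartheta_{11}(\tau,2u)$. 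Writing $H=e^{2\pi i t}g(\tau)\,\vartheta_{11}(\tau,2u)$, the heat equation $DF=0$, via Jacobi's identity $4\pi i\,\partial_\tau\vartheta_{11}=\partial_z^2\vartheta_{11}$ and the analogous identities for $\vartheta_{11}(\tau,v\pm u)$, reduces to an ODE in $\tau$ forcing $g(\tau)\propto\eta(\tau)^3$, i.e.\ $F\in\CC\,\widehat R^A$. For part (c), $m=\tfrac12$: both indices $\tfrac12$ in $v$ and $\tfrac32$ in $u$ are positive; a theta decomposition of $H$ in both variables, combined with the (F4) and heat-equation constraints and the product identity
\[
\eta(\tau)^3\,\vartheta_{11}(\tau,2u)=-2\,\vartheta_{00}(\tau,u)\,\vartheta_{01}(\tau,u)\,\vartheta_{10}(\tau,u),
\]
will cut out the one-dimensional line $\CC\,\widehat R^B_{2s\bmod 2\ZZ,\,2s'\bmod 2\ZZ}$, the characteristic $(a,b)$ being forced by matching $(e^{2\pi i s},e^{2\pi i s'})$ against the quasi-periodicity of $\vartheta_{ab}(\tau,v)/\vartheta_{ab}(\tau,u)$.

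The hard part will be the multiplier and characteristic bookkeeping in (c), to determine $(a,b)$ correctly in each of the four subcases indexed by the parities of $(2s,2s')\bmod 2\ZZ$, and to verify directly that $\widehat R^A$ and each admissible $\widehat R^B_{ab}$ indeed lies in $\cF^{[m;s,s']}$ with $\theta_F=0$; the latter verification is routine given the classical quasi-periodicity and heat-equation identities for $\vartheta_{ab}$ and $\eta$, but requires consistent sign conventions throughout.
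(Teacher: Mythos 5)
Your proposal follows essentially the same route as the paper's proof: clear the poles by passing to $H=\vartheta_{11}(\tau,v-u)\,\vartheta_{11}(\tau,v+u)\,F$ (the paper's $F_1$), note that $\theta_F=0$ plus (F2) and (F5) makes it holomorphic, and then use the index count $1-m$ in $v$ (negative for $m>1$; zero with nontrivial multiplier when $m=1$ and $(s,s')\notin\ZZ^2$) together with one-dimensional theta spaces in the remaining cases, exactly as in the paper. The only real divergence is the endgame of (c), where the paper replaces your three-dimensional theta decomposition in $u$ by a quicker differential trick (its Lemma \ref{L4.9}: write $F=\widehat R^B_{ab}\,A(\tau,u)$, use $D\widehat R^B_{ab}=0$, and apply $\partial/\partial v$ to the resulting equation to force $\partial A/\partial u=0$); note also that for $m=\tfrac12$ the (F4) conditions are already consequences of (F3), so there the heat equations must do all the work.
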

 
 Now we give proofs of Theorems \ref{t2.2} and \ref{t2.3}, using these three lemmas. Proofs of the latter will be given in Section 4.
 
\begin{proof}[Proof of Theorem 2.2]
 Let $ F \in \cF^{[m;s,s']}, $ then $ \theta_F \in \cG^{[m;s,s']} $ by Lemma \ref{L3.1}. Hence by Lemma \ref{L3.2} (a), 
\[ \theta_F = c_+ \theta^{+[m,s]}\,\, (\hbox{resp.}\,\,  c_- \theta^{-[m,s]})\,\,\hbox{if}\,\,  s' \in \ZZ \,\, (\hbox{resp.}\,\,   \in \half + \ZZ ), 
\,\,\hbox{where}\,\, c_{\pm} \in \CC.
\]
 Hence, by Lemma \ref{L3.2} (b), (c),
 \[ \theta_{F +\frac{ c_{\pm}}{2 \sqrt{m}} \tilde{\varphi}^{\pm[m,s]}} = 0,
  \]
 \noindent and therefore, by Lemma \ref{L3.3}(a),
 \[ F + \frac{c_{\pm}}{2 \sqrt{m}} \tilde{\varphi}^{\pm[m,s]} = 0, \]
 \noindent proving Theorem \ref{t2.2}.
 \end{proof} 
 
 \begin{proof}[Proof of Theorem 2.3]
 Let $ F \in \cF^{[m;s,s']}. $ Then, by (\ref{e3.3})
and Lemma \ref{L3.3} (b) and (c) we have:
 \begin{equation}
 \label{e3.4}
 F + \frac{c_{\pm}}{2 \sqrt{m}} \tilde{\varphi}^{\pm [m,s]} \in \CC \widehat{R}^A \,\mbox{ if }\, m = 1\, \mbox{ and }\, s, s' \in \ZZ \ (\mbox{resp. }\, \in \CC \widehat{R}^B_{2s \bmod 2 \ZZ, \ 2s' \bmod 2 \ZZ}\, \mbox{ if }\, m = \half). 
 \end{equation}
 \noindent It remains to check that 
the functions $ \widehat{R}^A $ (resp. $ \widehat{R}^B_{ab} $)
, given by (\ref{e2.21}) (resp. (\ref{e2.22}) lie in $ \cF^{[1;0,0]} $ (resp. in $ \cF^{[\half;s,s']} $ if $ a \equiv 2s \bmod 2 \ZZ, b \equiv 2s' \bmod 2 \ZZ $). Property (F2) is obvious for $ \widehat{R}^A $ and it follows from Remark \ref{r2.5} for $ \widehat{R}^B. $ Properties (F3) and (F4) of these functions follow from the elliptic transformation properties of $ \vartheta_{ab} $,
$a,b=0$ or $1$, $n\in \ZZ$, which can be easily derived from  \cite{M}, page 19: 
\begin{equation}
\label{e3.5}
\vartheta_{ab} (\tau, z + \frac{n}{2}) = (-1)^{abn+\frac{an(1-n)}{2}} \vartheta_{a,b+n\!\!\!\!\mod \! 2\ZZ} (\tau, z), 
\end{equation}
\begin{equation}
\label{e3.6}
\vartheta_{ab} (\tau, z + \frac{n}{2} \tau) = (-i)^{bn} q^{- \frac{n^2}{8}} e^{-\pi i n z} \vartheta_{a+n\!\!\!\! \mod\! 2\ZZ,b} (\tau, z).
\end{equation}

Finally, the functions $ \widehat{R}^A $ and $  \widehat{R}^B_{ab} $ being mock theta functions, due to the denominator identities 
(cf. \cite{KW1},  \cite{G}, \cite{GK}),
lie in the kernel of $ D $, and trivially they lie in the kernels of $ \bar{D} $ and $ \Delta, $ hence satisfy (F5).
\end{proof}

\section{Proof of Lemmas \ref{L3.1} -- \ref{L3.3}}

First, we prove the following lemma, which gives one of the inclusions of Lemma \ref{L3.2}(a). Its proof is along the same lines as the proof of Proposition 13.3 in \cite{K2}.
\begin{lemma}
\label{L4.1}
Let $ m \in \half \zp $ and $ s \in \half \ZZ, $ let $ s' = 0 $ (resp. $ = \half $), which corresponds to $ + $ (resp.~$ - $) case below. 
\begin{enumerate}
\item[(a)] If $ g(\tau, u, v, t) $ is a function, satisfying conditions (G1)--(G3), then
\[ g(\tau, u, v, t) = e^{2 \pi i m t} \hspace{-2em} \sum_{n, n' \in s + \ZZ \bmod 2 m \ZZ} \hspace{-2em} c_{n, n'} (\tau) e^{\frac{\pi i s'}{m}(n + n')} \Theta^{\pm}_{n,m} (- \bar{\tau}, 2 \bar{v}) \Theta^{\pm}_{n', m} (\tau, 2u) \]
for some real analytic functions $c_{n,n'}(\tau)$ in $ \tau,\,\, \Im \tau > 0. $
\item[(b)] If, in addition, $ g(\tau, u, v, t)  $ satisfies condition (G5), then
\[ g(\tau, u, v, t) = e^{2 \pi i m t} \hspace{-2em} \sum_{n, n' \in s + \ZZ \bmod 2 m \ZZ} \hspace{-2em} c_{n, n'}  e^{\frac{\pi i s'}{m}(n + n')} \Theta^{\pm}_{n,m} (- \bar{\tau}, 2 \bar{v}) \Theta^{\pm}_{n', m} (\tau, 2u) \]
for some $ c_{n,n'}  \in \CC. $
\item[(c)] If $ g(\tau, u,v,t) $ satisfies all the conditons (G1)--(G5), then 
\[ g(\tau, u, v, t) = c \ \theta^{\pm [m,s]} (\tau, u, v, t), \]
where $ c \in \CC $ and $ \theta^{\pm[m,s]} $ is given by (\ref{e3.3}).
Consequently $ \cG^{[m;s,s']} \subset \CC \theta^{\pm [m,s]}$.  
\end{enumerate}
\end{lemma}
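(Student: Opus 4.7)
The plan is to realize $g$ as a Fourier-type expansion in $e^{2\pi i u}$ and $e^{2\pi i \bar v}$, organize the coefficients into theta series via (G3)(ii), and then cut the expansion down further using (G5) and (G4). For (a), condition (G2) says $g/e^{2\pi i m t}$ is entire holomorphic in $u$ and in $\bar v$, so the integer-shift periodicities in (G3)(i) yield a doubly-indexed Fourier expansion
\[
 g(\tau, u, v, t) \;=\; e^{2\pi i m t} \sum_{n, n' \in s + \ZZ} b_{n, n'}(\tau, \bar\tau)\, e^{2\pi i n' u}\, e^{2\pi i n \bar v},
\]
convergent on $\CC^2_{u, \bar v}$. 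Applying (G3)(ii) with $k = 0$ (resp.\ $j = 0$) and matching Fourier coefficients produces the recursions
\[
 b_{n, n' + 2mj} \;=\; e^{-2\pi i s' j}\, q^{\,m j^2 + n' j}\, b_{n, n'}, \qquad b_{n + 2mk, n'} \;=\; e^{2\pi i s' k}\, \bar q^{\,m k^2 + n k}\, b_{n, n'}.
\]
Fixing representatives $n_0, n_0'$ modulo $2m$ and collecting all terms with $n \equiv n_0$, $n' \equiv n_0'$, completion of squares in these phases bundles the contributions into the product $\Theta^{\pm}_{n_0, m}(-\bar\tau, 2\bar v)\, \Theta^{\pm}_{n_0', m}(\tau, 2u)$. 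The sign $\pm$ is determined by whether $e^{2\pi i s' \ell}$ equals $1$ or $(-1)^\ell$, i.e.\ by whether $s' \in \ZZ$ or $s' \in \half + \ZZ$. Absorbing the conventional phase $e^{\pi i s'(n_0 + n_0')/m}$ identifies the leftover $(\tau, \bar\tau)$-dependent factor with the coefficient $c_{n_0, n_0'}(\tau)$ in the statement.

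For (b), both theta factors satisfy heat equations derived directly from their Fourier series:
\[
 \partial_u^2\,\Theta^{\pm}_{n', m}(\tau, 2u) \;=\; 8\pi i m\,\partial_\tau\,\Theta^{\pm}_{n', m}(\tau, 2u), \qquad \partial_{\bar v}^2\,\Theta^{\pm}_{n, m}(-\bar\tau, 2\bar v) \;=\; -8\pi i m\,\partial_{\bar\tau}\,\Theta^{\pm}_{n, m}(-\bar\tau, 2\bar v).
\]
Since $g$ has degree $m$ we have $\partial_t g = 2\pi i m\, g$, so condition (G5) reduces to $8\pi i m\,\partial_\tau g = \partial_u^2 g$ and $8\pi i m\,\partial_{\bar\tau} g = -\partial_{\bar v}^2 g$. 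Substituting the expansion from (a), the heat equations cancel all derivatives falling on the theta factors, leaving only $(\partial_\tau c_{n_0, n_0'})\,\Theta\,\Theta$ and $(\partial_{\bar\tau} c_{n_0, n_0'})\,\Theta\,\Theta$ contributions. The linear independence of the family $\{\Theta^{\pm}_{n_0, m}(-\bar\tau, 2\bar v)\, \Theta^{\pm}_{n_0', m}(\tau, 2u)\}$ (for fixed $(\tau, \bar\tau)$, by the disjointness of their Fourier supports in $e^{2\pi i u}$ and $e^{2\pi i \bar v}$) then forces $\partial_\tau c_{n_0, n_0'} = \partial_{\bar\tau} c_{n_0, n_0'} = 0$, so each $c_{n_0, n_0'}$ is a constant.

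For (c), condition (G4)(i) with $k = 1$ multiplies each theta product by $e^{\pi i(n_0 + n_0')/m}$, forcing $c_{n_0, n_0'} = 0$ unless $n_0 + n_0' \equiv 0 \pmod{2m}$, so the sum collapses to $\sum_{n_0} c_{n_0, -n_0}\,\Theta^{\pm}_{n_0, m}(-\bar\tau, 2\bar v)\, \Theta^{\pm}_{-n_0, m}(\tau, 2u)$. Condition (G4)(ii) with $k = 1$ is then handled via the index-raising quasi-periodicity $\Theta^{\pm}_{j, m}(\sigma, w + \sigma/m) = e^{-\pi i \sigma/(2m) - \pi i w}\,\Theta^{\pm}_{j + 1, m}(\sigma, w)$ applied to both factors (with $\sigma = -\bar\tau, w = 2\bar v$ and $\sigma = \tau, w = 2u$). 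After the quadratic $e^{\pi i(\bar\tau - \tau)/(2m)}$ and linear $e^{2\pi i(\bar v - u)}$ exponential prefactors cancel those on the right-hand side of (G4)(ii), the remaining relation reads $c_{n_0 + 1, -n_0 - 1} = c_{n_0, -n_0}$ for every $n_0$. Hence all these constants coincide, and $g = c\,\theta^{\pm[m, s]}$, giving (c) and the inclusion $\cG^{[m; s, s']} \subset \CC\,\theta^{\pm[m, s]}$.

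The main technical obstacle is the bookkeeping in (a): one must verify that the phases $e^{\mp 2\pi i s' \ell}$, $q^{m j^2 + n' j}$, and $\bar q^{m k^2 + n k}$ reassemble exactly, after completing the square, into the theta factors $\Theta^{\pm}_{n_0, m}(-\bar\tau, 2\bar v)\, \Theta^{\pm}_{n_0', m}(\tau, 2u)$, with no stray $\tau$- or $\bar\tau$-dependent factor that would later spoil the separation of variables needed in (b); in particular one must confirm that the exponents $(\pm 1)^{m|\alpha|^2}$ appearing in the theta series are exactly reproduced by $e^{\mp 2\pi i s' \ell}$ when $s' \in \half + \ZZ$. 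Once (a) is in hand, parts (b) and (c) follow essentially formally from the heat equations and the elementary quasi-periodicity of the theta functions, along the lines of \cite{K2}, Proposition~13.3.
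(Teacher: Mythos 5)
Your proposal follows essentially the same route as the paper's proof: a Fourier expansion in $u$ and $\bar v$ justified by (G2) and (G3)(i), recursions from (G3)(ii) that reassemble the coefficients into products $\Theta^{\pm}_{n,m}(-\bar\tau,2\bar v)\,\Theta^{\pm}_{n',m}(\tau,2u)$, the heat equations plus linear independence of these products for (b), and the elliptic shift properties of $\Theta^{\pm}_{j,m}$ under (G4)(i),(ii) for (c); the only differences are cosmetic (you expand in both variables at once rather than sequentially, and carry the $s'$-phase uniformly instead of treating $s'=\half$ as ``similar''). One small slip: your second recursion should read $b_{n+2mk,n'}=e^{2\pi i s'k}\,\bar q^{\,-mk^2-nk}\,b_{n,n'}$ (note the sign of the exponent), which is what actually matches the $\bar q^{\,-m(\cdot)^2}$ exponents of $\Theta^{\pm}_{n,m}(-\bar\tau,2\bar v)$.
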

\begin{proof}
First we prove (a) in the case $ s' = 0. $ Since, by (G2), 
$ g(\tau, u, v, t) $ is a holomorphic function  in $u$, and it
satisfies (G1) and (G3) (i) for $ k =0, $ we have its Fourier series decomposition in $ u: $
\begin{equation}
\label{e4.01}
g \tuvt = e^{2 \pi i m t} \sum_{n \in \ZZ} A_n (\tau, v) e^{2 \pi i (n + s) u}. 
\end{equation}
\noindent Hence we have:
\begin{equation}
\label{e4.02}
g (\tau, u + j \tau, v ,t) = e^{2 \pi i m t} \sum_{n \in \ZZ} A_n (\tau, v) e^{2 \pi i (n+s) u} q^{(n+s) j}, 
\end{equation}
\begin{equation}
\label{e4.03}
\begin{split}
 e^{-4 \pi i mju} q^{-mj^2} g \tuvt &=  e^{2 \pi i m t} \sum_{n \in \ZZ} A_n (\tau, v) q^{-mj^2} e^{2 \pi i (n-2mj+s)u}, \\
&=  e^{2 \pi i m t} \sum_{n \in \ZZ} A_{n+2mj} (\tau, v) q^{-mj^2} e^{2 \pi i (n+s) u}.\\
\end{split}
\end{equation}
\noindent Due to (G3)(iii) for $ k=0, $ comparing the coefficients of $ e^{2\pi i (n+s) u} $ in the RHS of (\ref{e4.02}) and (\ref{e4.03}), we obtain:
\[ A_n (\tau, v) q^{(n+s)j} = A_{n+2mj}(\tau,v) q^{-mj^2}. \]
\noindent Hence we have:
\begin{equation}
\label{e4.04}
B_n = B_{n + 2mj}, \mbox{ where } B_n = A_n q^{-\frac{(n+s)^2}{4m}}.
\end{equation}
\noindent Substituting this in (\ref{e4.01}), we obtain:
\begin{equation}
\label{e4.05}
g \tuvt = e^{2\pi i m t} \hspace{-1em} \sum_{p \in \ZZ \bmod 2m \ZZ} \hspace{-1em} B_p (\tau, v) \Theta^+_{p+s, m} (\tau, 2u).
\end{equation}

Next, we compute the coefficients $ B_p (\tau, v), p \in \ZZ $. Since, 
by (G2),  $ B_p (\tau, v)  $ is holomorphic in $ \bar{v} $ , by (G3)(i) with $ j=0, $ we have its decomposition in the following Fourier series:
\begin{equation}
\label{e4.06}
B_p (\tau, v) = \sum_{n \in \ZZ} \al^{(p)}_n (\tau) e^{2 \pi i (n+s) \bar{v}}.
\end{equation}
\noindent Arguing as above and using (G3)(ii) for $ j = 0, $ we obtain
\begin{equation}
\label{e4.07}
c^{(p)}_n (\tau) = c^{(p)}_{n-2mk} (\tau), \mbox{ where } c^{(p)}_n (\tau) = \al^{(p)}_n (\tau) e^{\frac{\pi i \bar{\tau}}{2m}(n+s)^2}.
\end{equation}
\noindent Substituting this in (\ref{e4.06}), we obtain: 
\begin{equation}
\label{e4.08}
B_p (\tau, v) = \hspace{-1.5em} \sum_{p' \in \ZZ \bmod 2m \ZZ} \hspace{-1.5em} c^{(p)}_{p'}(\tau) \Theta^+_{p' +s, m} (- \bar{\tau}, 2 \bar{v}),
\end{equation}
\noindent which completes the proof in this case. The proof in case $ s' = \half $ is similar. 

In order to prove (b), we apply $ D $ to both sides of the formula in (a) to obtain, using (G5):
\[ 
\begin{split}
0  = & \ 8 \pi i m \ e^{2 \pi i m t} \hspace{-1em} \sum_{n,n' \in s + \ZZ \bmod 2m \ZZ} \hspace{-1em} \frac{\partial c_{n,n'} (\tau)}{\partial \tau } \Theta^\pm_{n,m} (- \bar{\tau}, 2 \bar{v}) \Theta^\pm_{n',m} (\tau, 2u)\\
& + e^{2 \pi i m t}  \hspace{-1em} \sum_{n,n' \in s + \ZZ \bmod 2m \ZZ} \hspace{-1em} c_{n,n'} (\tau) \Theta^\pm_{n,m} (- \bar{\tau}, 2 \bar{v}) \left( 8 \pi i m \frac{\partial }{\partial \tau } - \left( \frac{\partial }{\partial u}\right)^2 \right) \Theta^\pm_{n'm} (\tau, 2u). \\
\end{split}
\]
\noindent Since one has
\begin{equation}
\label{e4.09}
\left( 8 \pi i  m \ \frac{\partial}{\partial \tau} - \left( \frac{\partial}{\partial u}\right)^2\right) \, \Theta^\pm_{n,m} (\tau, 2u) = 0,
\end{equation}
\noindent we deduce that $ \frac{\partial c_{n,n'} (\tau)}{\partial \tau} =0. $

Similarly, applying $ 4 \frac{\partial }{\partial t} \frac{\partial }{\partial \bar{\tau}} + \left( \frac{\partial }{\partial \bar{v}}\right)^2,$ we deduce that $  \frac{\partial c_{n,n'} (\tau)}{\partial \bar{\tau}} =0, $ proving (b). 

In order to prove (c), introduce the following functions, for $ m \in \half \zp, \ j,k \in \half \ZZ: $
\[ h^\pm_{j,k} (\tau, u, v) = \Theta^\pm_{j,m} (- \bar{\tau}, 2 \bar{v}) \Theta^\pm_{-k,m} (\tau, 2u). \]
\noindent These functions have the following elliptic transformation properties, which follow from (\ref{e1.15}).
\begin{equation}
\label{e4.10}
\begin{split}
h^\pm_{j,k} \left(\tau, u + \frac{p}{2m}, v + \frac{p}{2m }\right)& = 
e^{\frac{\pi i(j-k)p}{m}} h^\pm_{j,k} (\tau, u, v), \ p \in \ZZ; \\
h^\pm_{j,k} \left(\tau, u + \frac{\tau}{2m}, v + \frac{\tau}{2m }\right)& = 
e^{\frac{\pi i }{2m} (\bar{\tau} - \tau)}
e^{2\pi i(\bar{v}-u)}
h^{\pm}_{j-1,k-1} (\tau, u, v). \\
\end{split}
\end{equation}
\noindent By (b) we have:
\[ g \tuvt = e^{2 \pi i m t} \hspace{-2em} \sum_{n,n' \in s + \ZZ \bmod 2m \ZZ} \hspace{-2em} c_{n,-n'} e^{\frac{\pi i s'}{m} (n-n')} h^\pm_{n,n'} (\tau, u, v), \]
\noindent where $ c_{n, -n'} \in \CC. $ Using (\ref{e4.10}), and condition (G4)(i), we obtain that $ c_{n, -n'} \neq 0 $ only if $ n-n' \in 2m \ZZ, $ hence
\[ g \tuvt = e^{2 \pi i m t} \hspace{-2em}\sum_{n \in s + \ZZ \bmod 2 m \ZZ} \hspace{-2em} c_n h^\pm_{n,n} (\tau, u, v).  \]
\noindent Using (\ref{e4.10}) and condition (G4) (ii), we deduce that all the $ c_n $'s are equal, proving (c).
\end{proof}

\begin{lemma}
\label{L4.2}
The functions $ \tilde{\varphi}^{\pm[m,s]} (\tau, u, v, t)$ satisfy the elliptic transformation properties (F3).
\end{lemma}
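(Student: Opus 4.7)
The plan is to verify (F3)(i) and (F3)(ii) for $\tilde{\varphi}^{\pm[m,s]}$ by first computing the elliptic transformation of $\tilde{\Phi}^{\pm[m,s]}$ in the $(z_1,z_2)$-coordinates and then translating to the $(u,v)$-coordinates via (2.12). Under this change of variables, a shift $(u,v) \mapsto (u+j, v+k)$ corresponds to $(z_1,z_2) \mapsto (z_1+(k-j),\, z_2-(k+j))$, and similarly for $\tau$-shifts, so it suffices to determine how $\tilde{\Phi}^{\pm[m,s]}$ transforms under the four basic shifts $z_i \to z_i+1$ and $z_i \to z_i+\tau$, $i=1,2$.

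\emph{Step 1.} From (2.07)--(2.08), compute the elliptic behavior of the unmodified $\Phi^{\pm[m,s]}$ by translating the summation index $n$ in the defining series. The integer shifts produce clean factors $e^{2\pi i s}$ or $\pm 1$, while the $\tau$-shifts produce factors of the form $q^{-m} e^{(\text{linear in }z_i)}$ accompanied by an effective change $s \mapsto s\pm 2m$ in the summand; the latter is the defect that $\Phi^{\pm[m,s]}$ alone has with respect to (F3)(ii).

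\emph{Step 2.} Compute the elliptic behavior of the modifier $\Phi^{\pm[m,s]}_{\add}$ defined by (2.10). This uses the elliptic transformation of the Jacobi forms $\Theta^{\pm}_{j,m}$ given in (1.15), together with the elliptic transformation of the real analytic functions $R^{\pm}_{j,m}(\tau,z)$ from (2.09), established in \cite{Z}, \cite{KW3}, \cite{KW4}. The contributions from the modifier under the $\tau$-shifts precisely cancel the defect of $\Phi^{\pm[m,s]}$ found in Step 1, so that via (2.11) one obtains clean elliptic transformation formulas for $\tilde{\Phi}^{\pm[m,s]}$. Finally, translating to $(u,v)$-coordinates and collecting exponential and $q$-factors yields (F3)(i) and (F3)(ii) exactly, with $s'=0$ in the $+$ case and $s'=\half$ in the $-$ case.

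The principal obstacle is verifying the cancellation in Step 2. One must track the $E(\psi_{m,n}(\tau,z))$ terms inside $R^{\pm}_{j,m}$ carefully under shifts of $z$, and ensure that the finite sum over $j$ in (2.10) reorganizes to match the $s$-shift defect found in Step 1. In the $-$ case one must further manage the sign factors $(\pm 1)^{(n-j)/2m}$ appearing in $R^{-}_{j,m}$ so that they combine with the alternating sign $(-1)^n$ of (2.08) to produce the required phase $e^{\pi i (j+k)} = e^{2\pi i s'(j+k)}$ with $s' = \half$. Beyond this bookkeeping, the argument follows the pattern of the verifications already carried out in \cite{KW3}, \cite{KW4}.
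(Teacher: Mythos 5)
Your outline is sound and matches the substance of the paper's proof: the paper disposes of this lemma with the single line ``It follows from \cite{KW5}, Theorem 1.5(c)'', i.e.\ it defers to the earlier papers exactly the computation you sketch (elliptic behaviour of $\Phi^{\pm[m,s]}$ under the basic shifts, elliptic behaviour of the modifier via the known transformations of $\Theta^{\pm}_{j,m}$ and $R^{\pm}_{j,m}$, and the cancellation of the resulting defects). Since you likewise defer the decisive cancellation in your Step 2 to the verifications in \cite{KW3}, \cite{KW4}, the two arguments rest on the same external input, and there is nothing substantive to compare.
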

\begin{proof}
It follows from \cite{KW5}, Theorem 1.5(c).
\end{proof}
\begin{lemma}
\label{L4.3}
$ D \tilde{\varphi}^{\pm [m,s]} (\tau, u,v,t) = 0, $ where $ D $ is defined in (\ref{e2.19}).
\end{lemma}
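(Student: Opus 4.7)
The plan is to exploit the decomposition
\[\tilde{\varphi}^{\pm[m,s]} = \varphi^{\pm[m,s]} - \tfrac{1}{2}\,\varphi^{\pm[m,s]}_{\mathrm{add}}\]
provided by (\ref{e2.11}) together with (\ref{e2.13}), and to prove $D\varphi^{\pm[m,s]}=0$ and $D\varphi^{\pm[m,s]}_{\mathrm{add}}=0$ separately.

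For the first summand I would work from the series (\ref{e2.07})--(\ref{e2.08}) rewritten in the coordinates $(\tau,u,v,t)$ via (\ref{e2.12}). Expanding the geometric factor
\[(1-e^{2\pi i(v-u)}q^n)^{-1} = \sum_{k\ge 0} e^{2\pi ik(v-u)}q^{nk}\]
in the relevant region of convergence, every resulting term is a monomial $e^{2\pi i(mt+\alpha\tau+\beta u+\gamma v)}$ with $\alpha=mn^2+n(s+k)$, $\beta=-(2mn+s+k)$, $\gamma=s+k$. A direct inspection yields $4m\alpha+\gamma^2-\beta^2=0$, which is exactly the condition for $D$ to annihilate such a monomial. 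Hence $D\varphi^{\pm[m,s]}=0$ on the open set of convergence, and by analytic continuation on all of $X$.

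For the modifier I rewrite (\ref{e2.10}) in the $(u,v)$-coordinates as a finite sum
\[\varphi^{\pm[m,s]}_{\mathrm{add}} = e^{2\pi imt}\sum_{j} R^{\pm}_{j,m}(\tau,v)\,\Theta^{\pm}_{j,m}(\tau,-2u).\]
Since $R^{\pm}_{j,m}(\tau,v)$ is independent of $u$ and $\Theta^{\pm}_{j,m}(\tau,-2u)$ is independent of $v$, the Leibniz rule gives
\[D\bigl(e^{2\pi imt}R^{\pm}_{j,m}\,\Theta^{\pm}_{j,m}\bigr) = e^{2\pi imt}\!\left[\Theta^{\pm}_{j,m}\bigl(8\pi im\partial_\tau+\partial_v^2\bigr)R^{\pm}_{j,m} + R^{\pm}_{j,m}\bigl(8\pi im\partial_\tau-\partial_u^2\bigr)\Theta^{\pm}_{j,m}\right].\]
The second bracket vanishes by the classical heat equation (\ref{e4.09}) for Jacobi theta functions, so everything reduces to the companion identity
\[(8\pi im\partial_\tau+\partial_v^2)R^{\pm}_{j,m}(\tau,v)=0.\]

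Establishing this last identity is the main obstacle. I would verify it term-by-term in (\ref{e2.09}), splitting each summand into its \emph{holomorphic} piece (carrying $\mathrm{sign}$) and \emph{non-holomorphic} piece (carrying $E(\psi_{m,n}(\tau,v))$). For the holomorphic piece, the pure exponential $\exp(-\pi in^2\tau/(2m)+2\pi inv)$ is annihilated by the heat operator since $8\pi im(-\pi in^2/(2m))+(2\pi in)^2=0$. For the non-holomorphic piece, using $E'(\psi)=2e^{-\pi\psi^2}$, $E''(\psi)=-4\pi\psi e^{-\pi\psi^2}$ together with the Wirtinger derivatives $\partial_v\psi_{m,n}=i\sqrt{m/y}$ and $\partial_\tau\psi_{m,n}=-i\psi_{m,n}/(4y)-i(\Im v/y)\sqrt{m/y}$ (where $y=\Im\tau$), the terms proportional to $E(\psi_{m,n})$ cancel against the contribution of $\exp(-\pi in^2\tau/(2m))$, and the remaining $e^{-\pi\psi_{m,n}^2}$-terms collapse via the identity $m\psi_{m,n}/y = (n-2m\,\Im v/y)\sqrt{m/y}$. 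This computation is essentially the one already carried out in \cite{Z} and in \cite{KW3}, \cite{KW4} for the modular analysis of $R^{\pm}_{j,m}$, and modulo it the lemma follows.
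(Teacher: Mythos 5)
Your proof is correct, but it takes a genuinely different route from the paper's. You annihilate the two summands of (\ref{e2.11}) separately, and the crux is your direct verification of the holomorphic heat equation $(8\pi i m\,\partial_\tau+\partial_v^2)R^{\pm}_{j,m}(\tau,v)=0$ by termwise differentiation of (\ref{e2.09}); the cancellations you indicate do close up (the $4\pi^2n^2E$ terms cancel against the $\tau$-derivative of the exponential, and the $e^{-\pi\psi_{m,n}^2}$ terms cancel via $m\psi_{m,n}/y=(n-2m\Im v/y)\sqrt{m/y}$), so you obtain the stronger statement $D\varphi^{\pm[m,s]}_{\mathrm{add}}=0$ outright. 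The paper never differentiates $E$ in $\tau$ or $v$: it uses only the $\bar v$-derivative formula $\partial_{\bar v}R^{\pm}_{j,m}=2if_m(\tau,v)\Theta^{\pm}_{j,m}(-\bar\tau,2\bar v)$ of (\ref{e4.13}) (quoted from \cite{KW4}) together with the heat equation (\ref{e4.11}) for the Gaussian factor $f_m$, which yields only $\partial_{\bar v}\,D\tilde{\varphi}^{\pm[m,s]}=0$, i.e.\ that $D\tilde{\varphi}^{\pm[m,s]}$ is holomorphic in $v$; it then invokes the elliptic transformation properties (Lemma \ref{L4.2}) and a Liouville-type argument with $\vartheta_{11}(\tau,v)^{2m}$ to force $D\tilde{\varphi}^{\pm[m,s]}=0$. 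Your approach is more self-contained and computational, at the price of justifying termwise differentiation of the $E$-series (which is fine, by the Gaussian decay of $\mathrm{sign}(\cdot)-E(\psi_{m,n})$); the paper's approach trades that computation for already-established transformation properties plus a soft ellipticity step. One small point to tighten: the expansion of $(1-e^{2\pi i(v-u)}q^n)^{-1}$ as $\sum_{k\geq 0}e^{2\pi ik(v-u)}q^{nk}$ fails for $n$ sufficiently negative at any fixed $(u,v)$, so you must expand in the opposite direction there; the same isotropy identity $4m\alpha+\gamma^2-\beta^2=0$ holds for those monomials, so $D\varphi^{\pm[m,s]}=0$ still follows (this is exactly the paper's assertion that mock theta functions satisfy axiom (iii)).
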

\begin{proof}
\noindent Letting 
 \[ f_m (\tau, v)  =  \left(\frac{m}{y}\right)^{\half} e^{-4 \pi m y a^2}, \] 
 \noindent and using (\ref{e2.17}), we observe that
\begin{equation}
\label{e4.11}
\left( 8 \pi i m \frac{\partial }{\partial \tau} + \left(\frac{\partial}{\partial v}\right)^2 \right) f_m (\tau, v)=0.
\end{equation}
We claim that
\begin{equation}
\label{e4.12}
D \left( \frac{\partial}{\partial \bar{v}} \varphi^{\pm[m,s]}_{\mathrm{add}}\right) = 0,
\end{equation}
\noindent where $ \varphi^{\pm [m,s]}_{\mathrm{add}} (\tau, u, v, t) $ is obtained from  $ \Phi^{\pm [m,s]}_{\mathrm{add}} (\tau, z_1, z_2, t) $ given by (\ref{e2.10}), by the change of variables (\ref{e2.12}).

Indeed, by Lemma 1.7(1) from \cite{KW4} and using (\ref{e2.18}), we obtain:
\begin{equation}
\label{e4.13}
 \frac{\partial}{\partial \bar{v}} R^{\pm}_{j,m} (\tau, v) = 2i f_m (\tau, v) \Theta^{\pm}_{j,m} (- \bar{\tau}, 2 \bar{v}). 
\end{equation}
Hence
\[  \frac{\partial}{\partial \bar{v}} \varphi^{\pm[m,s]}_{\mathrm{add}} (\tau, u, v, t) = -i e^{2 \pi i m t} \hspace{-1em}\sum_{\substack{j \in s + \ZZ \\ s \leq j < s + 2m} } \hspace{-1em} f_m (\tau, v) \Theta^{\pm}_{j,m} (- \bar{\tau}, 2 \bar{v}) \Theta^{\pm}_{-j,m} (\tau, 2u).\]
\noindent Applying $ D $ to both sides of this equation, we obtain
\[ D \left(\frac{\partial}{\partial \bar{v}} \ \varphi^{\pm[m,s]}_{\mathrm{add}} \right) = \left( 8 \pi i m \ \frac{\partial}{\partial \tau} + \left( \frac{\partial}{\partial v}\right)^2 - \left(\frac{\partial}{\partial u}\right)^2\right) \frac{\partial}{\partial \bar{v}} \ \varphi^{\pm[m,s]}_{\mathrm{add}} = 0, \]
\noindent using (\ref{e4.09}) and (\ref{e4.11}). Hence (\ref{e4.12}) is proved. 

Next, $ D \varphi^{\pm[m,s]} = 0, $  since $ \varphi^{\pm[m,s]} $ is a mock theta function. Hence $ D \tilde{\varphi}^{\pm[m,s]} =   D \varphi_{\mathrm{add}}^{\pm[m,s]} $ is a real analytic function, since $ \varphi_{\mathrm{add}}^{\pm[m,s]} $ is. On the other hand,
\[ \frac{\partial}{\partial \bar{v}} D \tilde{\varphi}^{\pm[m,s]}= \frac{\partial}{\partial \bar{v}} D \varphi^{\pm[m,s]}_{\mathrm{add}}= D \frac{\partial}{\partial \bar{v}}  \varphi^{\pm[m,s]}_{\mathrm{add}}= 0\]
\noindent by (\ref{e4.12}). Hence, being real analytic, the function $ D \tilde{\varphi}^{\pm[m,s]} $ is holomorphic in $ v. $

Since $ D $ commutes with the transformations $ p_{\al} $ and $ t_{\al} $ (defined by (\ref{e1.06a}) and (\ref{e1.06})), it follows from Lemma \ref{L4.2} that $ D \tilde{\varphi}^{\pm[m,s]} $ satisfies (F3) as well. Hence, letting $ \tilde{\psi} = D \tilde{\varphi}^{\pm[m,s]} $ for short, we have, in particular:
\begin{equation}
\label{e4.14}
\tilde{\psi} (\tau, u, v+2, t) = \tilde{\psi} (\tau, u, v, t)\,\, \mbox{ and }\,\, \tilde{\psi} (\tau, u, v-2 \tau,t) = e^{8 \pi i m (\tau -v)} \tilde{\psi} (\tau, u, v, t). 
\end{equation}

\noindent Now, by (\ref{e3.5}) and (\ref{e3.6}), we have:
\begin{equation}
\label{e4.15}
\vartheta_{11} (\tau, v+2) = \vartheta_{11} (\tau, v), \quad \vartheta_{11} (\tau, v - 2\tau) = e^{4 \pi i (v-\tau)} \vartheta_{11} (\tau, v).
\end{equation}
\noindent Letting $ f(v) = \tilde{\psi} (\tau, u, v, t) \vartheta_{11} (\tau, v)^{2m}, $ we have by (\ref{e4.14}) and (\ref{e4.15}):
\[ f(v+2) = f(v), \quad f(v-2 \tau) = f(v), \]
\noindent and since $ \tilde{\psi} $ is holomorphic in $ v, f(v) $ is holomorphic in $ v $ as well. Thus $ f(v) $ is a bounded holomorphic function in $ v \in \CC,  $ hence $ f(v)  $ is constant in 
$ v $. Since $ \vartheta_{11} (\tau, 0) = 0, $ we deduce that $ f(v) = 0, $ hence $ \tilde{\psi} = 0,$ proving the lemma.
\end{proof}
\begin{lemma}

\label{L4.4}
Let $ f = f(\tau, u, v, t) $ be a real analytic function of degree $ m$ on the domain $X$.
Then
\begin{enumerate}
\item[(a)] The function $ \theta_f $ (defined by (\ref{e3.1})) is holomorphic in $ \bar{v} $ if and only if $ \Delta f = 0. $
\item[(b)] $ \left( 8 \pi i m \frac{\partial}{\partial \bar{\tau}} + \left( \frac{\partial }{\partial \bar{v}}\right)^2\right) \theta_f = \theta_{\bar{D} f}$.
\item[(c)] If $ \Delta f = 0, $ then $ D \theta_f = \theta_{Df} $ (see (\ref{e2.19}) for the definition of $ D, \bar{D}, $ and $ \Delta $).
\end{enumerate}
\end{lemma}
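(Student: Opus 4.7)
The plan is to reduce all three parts to direct Leibniz-rule computations centered on the prefactor
\[ c(\tau,\bar\tau,v,\bar v) := -2iy^{1/2}e^{4\pi m a^2 y},\qquad \theta_f = c\,\partial_{\bar v}f. \]
First I would precompute the derivatives of $c$ from $\partial_v a = 1/(2iy)$, $\partial_{\bar v}a = -1/(2iy)$, and $\partial_\tau y = -\partial_{\bar\tau}y = 1/(2i)$; a short computation gives
\[ \partial_v c = -4\pi i m a\,c, \quad \partial_{\bar v}c = 4\pi i m a\,c, \quad \partial_\tau c = c\bigl(-\tfrac{i}{4y}+2\pi i m a^2\bigr), \quad \partial_{\bar\tau}c = c\bigl(\tfrac{i}{4y}-2\pi i m a^2\bigr), \]
and, by iteration, $\partial_{\bar v}^2 c = c\bigl(-\tfrac{2\pi m}{y}-16\pi^2 m^2 a^2\bigr)$. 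Throughout, the degree-$m$ hypothesis lets me replace $\partial_t$ by $2\pi i m$ wherever it acts on $f$ or $\theta_f$.

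For (a), the Leibniz rule and the formula for $\Delta$ give
\[ \partial_v\theta_f = (\partial_v c)\,\partial_{\bar v}f + c\,\partial_v\partial_{\bar v}f = -c\bigl(4\pi i m a\,\partial_{\bar v}f - \partial_v\partial_{\bar v}f\bigr) = -c\,\Delta f, \]
after rewriting $-4\pi i m a\,\partial_{\bar v}f = -2a\,\partial_t\partial_{\bar v}f$. Since $c$ is nowhere zero on $X$, one concludes $\partial_v\theta_f \equiv 0$ if and only if $\Delta f = 0$, proving (a). For (c), the hypothesis $\Delta f = 0$ combined with (a) gives $\partial_v\theta_f \equiv 0$, hence $\partial_v^2\theta_f = 0$ and $D\theta_f = 8\pi i m\,\partial_\tau\theta_f - c\,\partial_u^2\partial_{\bar v}f$. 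Expanding $\theta_{Df} = c\bigl(8\pi i m\,\partial_\tau\partial_{\bar v}f + \partial_v^2\partial_{\bar v}f - \partial_u^2\partial_{\bar v}f\bigr)$ and using the $\partial_\tau c$ formula above, matching the two sides reduces to the identity
\[ \partial_v^2\partial_{\bar v}f = \bigl(\tfrac{2\pi m}{y} - 16\pi^2 m^2 a^2\bigr)\partial_{\bar v}f, \]
which follows by differentiating $\partial_v\partial_{\bar v}f = 4\pi i m a\,\partial_{\bar v}f$ (equivalent to $\Delta f=0$) once more in $v$ and applying $\partial_v a = 1/(2iy)$.

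Part (b) is the heaviest bookkeeping but proceeds analogously. I would compute $\partial_{\bar v}^2\theta_f$ using the formula for $\partial_{\bar v}^2 c$ above and $\partial_{\bar\tau}\theta_f$ using $\partial_{\bar\tau}c$, then sum $8\pi i m\,\partial_{\bar\tau}\theta_f + \partial_{\bar v}^2\theta_f$: the $\pm 16\pi^2 m^2 a^2$ contributions cancel and the two $\frac{1}{y}$ terms combine. On the other side, $\theta_{\bar Df} = c\,\partial_{\bar v}(\bar Df)$ is expanded by writing $\bar Df$ as a combination of $\partial_{\bar\tau}f$, $a\,\partial_{\bar v}f$ and $\partial_{\bar v}^2 f$ (using $\partial_t f = 2\pi i m f$), then applying $\partial_{\bar v}$, with the $\partial_{\bar v}(a\,\partial_{\bar v}f)$ term handled via $\partial_{\bar v}a = -1/(2iy)$. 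Term-by-term comparison yields the equality.

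The only real obstacle is organizational: there are several $\frac{1}{y}$-terms coming from the Gaussian exponent $4\pi m a^2 y$, and bookkeeping the signs through all the products is the main source of error. The structural point making everything work is that the coefficient $4\pi m$ in the exponent is calibrated so that the derivatives of the prefactor $c$ exactly reproduce the $a$-dependent terms arising from $\bar D$ (via the $a\,\partial_{\bar v}$ piece of $D_{\bar\tau}$) and from iterating the relation $\partial_v\partial_{\bar v}f = 4\pi i m a\,\partial_{\bar v}f$ implied by $\Delta f = 0$.
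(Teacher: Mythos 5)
Your proposal is correct and follows essentially the same route as the paper's proof: a direct Leibniz-rule computation on the Gaussian prefactor $-2iy^{1/2}e^{4\pi m a^2 y}$ using the coordinate relations $\partial_v a = \tfrac{1}{2iy}$, $\partial_{\bar v}a=-\tfrac{1}{2iy}$, $\partial_\tau y=-\partial_{\bar\tau}y=\tfrac{1}{2i}$ and $\partial_{\bar\tau}=D_{\bar\tau}-a\partial_{\bar v}$, with your treatment of (c) (using (a) to kill $\partial_v^2\theta_f$ and then iterating $\partial_v\partial_{\bar v}f=4\pi i m a\,\partial_{\bar v}f$) being only a cosmetic reorganization of the paper's substitution into $\partial_v^2 g$. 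One caveat on (b): the term-by-term match you assert closes only when $\bar D$ carries the coefficient $4$ (not the $2$ printed in (\ref{e2.19})) in front of $\frac{\partial}{\partial t}D_{\bar\tau}$ --- i.e.\ $\bar D f=8\pi i m\,D_{\bar\tau}f+\partial_{\bar v}^2 f$ on degree-$m$ functions, consistent with the coefficient of $D$ and with (G5) --- which is evidently the intended definition, so this reflects a typo in the paper rather than a gap in your argument.
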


\begin{proof}
Let, for simplicity of notation, $ g = \frac{i}{2} \theta_f. $ Then, by (\ref{e3.1}),
\begin{equation}
\label{e4.16}
g = y^{\half} e^{4 \pi m a^2 y} \frac{\partial f}{\partial \bar{v}}.
\end{equation}

\noindent In order to perform the computations of partial derivatives, we will need the following formulas, which are immediate by (\ref{e2.17}):
\begin{equation}
\label{e4.17}
\frac{\partial a}{\partial v} = \frac{1}{2 i y}, \quad \frac{\partial a}{\partial \bar{v}} = -\frac{1}{2 i y},  \quad \frac{\partial a}{\partial \tau} = \frac{ia}{2 y}, \quad \frac{\partial y}{\partial \tau} = \frac{1}{2 i}, \quad \frac{\partial}{\partial \bar{\tau}} = D_{\bar{\tau}} - a \frac{\partial}{\partial \bar{v}}.
\end{equation}
\noindent Using the first of these formulas and (\ref{e4.16}), we obtain:
\[ \frac{\partial g}{\partial v} = -y^{\half} e^{4 \pi m a^2 y} \Delta f, \]
\noindent proving (a). 

Using (\ref{e4.16}) and (\ref{e4.17}), we obtain:
\begin{equation}
\label{e4.18}
\frac{\partial g}{\partial \bar{\tau}} = \frac{1}{4i} y^{-\half} e^{4 \pi m a^2 y} \frac{\partial f}{\partial \bar{v}} - 2 \pi i m y^{\half} a^2 e^{4 \pi m a^2 y} \frac{\partial f}{\partial \bar{v}} -y^{\half} a e^{4 \pi m a^2 y} \frac{\partial^2 f }{\partial \bar{v}^2} + y^{\half} e^{4 \pi m a^2 y} \frac{\partial }{\partial \bar{v}} D_{\bar{\tau}} f, 
\end{equation}
\begin{equation}
\label{e4.19}
\frac{\partial g}{\partial \tau} = \frac{1}{4i} y^{-\half} e^{4 \pi m a^2 y} \frac{\partial f}{\partial \bar{v}} + 2 \pi i m y^{\half} e^{4 \pi m a^2 y} \frac{\partial f}{\partial\bar{v}} + y^{\half} e^{4 \pi m a^2 y} \frac{\partial^2 f }{ \partial \bar{v} \partial \tau },
\end{equation}
 \begin{equation}
 \label{e4.20}
 \begin{split}
 \frac{\partial^2 g}{\partial \bar{v}^2}  = &- 2 \pi m y^{-\half} e^{4 \pi m a^2 y}\frac{\partial f}{ \partial \bar{v}} -(4 \pi m)^2 y^{\half} a^2 e^{4 \pi m a^2 y}\frac{\partial f}{ \partial \bar{v}}   + 8 \pi m i y^{\half} a e^{4 \pi m a^2 y} \frac{\partial^2 f }{ \partial \bar{v}^2} \\ & + y^{\half} e^{4 \pi m a^2 y} \frac{\partial^3 f}{ \partial \bar{v}^3 }. \\
\end{split} 
 \end{equation}
\noindent Multiplying (\ref{e4.18}) by $ 8 \pi i m  $ and adding to it (\ref{e4.20}), we obtain (b).

Similarly, we find
\[ \frac{\partial^2 g}{ \partial v^2}\! \! =\!\! - 2 \pi m y^{-\half} e^{4 \pi m a^2 y} \frac{\partial f}{ \partial \bar{v}} - (4 \pi m)^2 y^{\half} a^2 e^{4 \pi m a^2 y} \frac{\partial f}{ \partial \bar{v}} - 8 \pi i m y^{\half} a e^{4 \pi m a^2 y} \frac{\partial^2 f}{ \partial v \partial \bar{v} } + y^{\half} e^{4 \pi m a^2 y} \frac{\partial^3 f }{ \partial v^2 \partial \bar{v}}. \]
If $ \Delta f =0, $ we can replace in this formula $ \frac{\partial^2 f }{ \partial v \partial \bar{v} } $ by $ 4 \pi i m a \frac{\partial f}{ \partial \bar{v}} , $ hence we have
\begin{equation}
\label{e4.21}
\frac{\partial^2 g }{ \partial v^2} = -2 \pi m y^{-\half} e^{4 \pi m a^2 y} \frac{\partial f}{ \partial \bar{v}} + (4 \pi m)^2 y^{\half} a^2 e^{4 \pi ma^2 y} \frac{\partial f}{ \partial \bar{v}} + y^{\half} e^{4 \pi m a^2 y} \frac{\partial^3 f}{ \partial v^2 \partial \bar{v}}, 
\end{equation}
\noindent provided that $ \Delta f = 0. $ Multiplying (\ref{e4.19}) by $ 8 \pi im  $ and adding to it (\ref{e4.21}), we obtain:
\begin{equation}
\label{e4.22}
\left(8 \pi i m \frac{\partial }{ \partial \tau } + \frac{\partial^2 }{ \partial v^2}\right) g = y^{\half} e^{4 \pi m a^2 y} \frac{\partial }{ \partial \bar{v}} \left(8 \pi i m \frac{\partial }{ \partial \tau } + \frac{\partial^2 }{ \partial v^2}\right) f,
\end{equation}
\noindent provided that $ \Delta f = 0.  $ Subtracting from (\ref{e4.22}) the equation 
\[ \frac{\partial^2 g}{ \partial u^2} = y^{\half} e^{4 \pi m a^2 y} \frac{\partial }{ \partial \bar{v}} \frac{\partial^2 }{ \partial u^2 }f, \]
\noindent we obtain
\[ Dg = y^{\half} e^{4 \pi m a^2 y} \frac{\partial }{ \partial \bar{v}} Df, \]
\noindent provided that $ \Delta f =0, $ proving (c).
\end{proof}
\begin{lemma}
\label{L4.5}
Let $ m \in \half \zp, \ s \in \half \ZZ, $ and let $ f = f(\tau, u, v, t) $ 
be a function of degree $m$ on the domain $X$, which is real analytic 
in $\bar{v}$. Let $ j,k \in \ZZ. $ Then
\begin{enumerate}
\item[(a)] $ f(\tau, u + j, v + k, t) = e^{2 \pi i s (j+k)} f(\tau, u, v, t) $
\noindent implies that 

$  \theta_f (\tau, u + j, v+k, t) = e^{2 \pi i s (j+k)} \theta_f (\tau, u, v, t).  $
\item[(b)] $ f(\tau, u + j \tau, v + k \tau, t) = (\pm 1)^{j+k} e^{4 \pi i m 
(kv - ju)} q^{m(k^2 - j^2)} f(\tau, u, v, t) $ \noindent implies that 

$ \theta_f (\tau, u + j \tau, v + k \tau, t) 
= (\pm 1)^{j+k} e^{4 \pi i m (k \bar{v} - j u)} e^{2 \pi i m (k^2 \bar{\tau} - j^2 \tau)} \theta_f (\tau, u, v, t).   $
\item[(c)] $ f \left(\tau, u + \frac{k}{2m}, v + \frac{k}{2m}, t \right) = f(\tau, u, v, t)$ 
\noindent implies that 

$  \theta_f \left(\tau, u + \frac{k}{2m}, v + \frac{k}{2m}, t\right) = \theta_f (\tau, u, v, t).   $
\item[(d)] $ f \left(\tau, u + \frac{k \tau}{2m}, v + \frac{k \tau}{2m}, t \right)  = e^{2 \pi i k (v-u)} f(\tau, u, v, t)$ \noindent implies that 

$  \theta_f \left(\tau, u + \frac{k \tau}{2m}, v + \frac{k \tau}{2m}, t \right) = e^{2 \pi i k (\bar{v}-u)} e^{\frac{\pi i k^2}{2m}(\bar{\tau} - \tau)} \theta_f (\tau, u, v, t).  $
\end{enumerate}
\end{lemma}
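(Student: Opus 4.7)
The plan is uniform across the four parts: apply $\partial/\partial\bar{v}$ to both sides of the hypothesis on $f$, then multiply by the normalization factor $-2iy^{1/2}e^{4\pi m a^{2}y}$ of (\ref{e3.1}) evaluated at the shifted point, and compare with $\theta_{f}$ at the original point times the proposed multiplier. The key observation enabling this is that each multiplier appearing on the right-hand side of the hypotheses in (a)--(d) depends on $\tau, u, v$ but not on $\bar{v}$, hence is holomorphic in $v$, so $\partial/\partial\bar{v}$ passes through it without contribution.

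I would first dispose of (a) and (c), which involve only real shifts of $u$ and $v$. Under such shifts, the quantities $y=\Im\tau$ and $a=(v-\bar{v})/(\tau-\bar{\tau})$ are both unchanged, so the prefactor $y^{1/2}e^{4\pi m a^{2}y}$ at the shifted point equals the one at the original point, and the stated identity for $\theta_{f}$ is immediate from the hypothesis applied to $\partial f/\partial\bar{v}$.

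The substantive work is in (b) and (d), which involve shifts $v\mapsto v+c\tau$ for $c\in\RR$ (so that $\bar{v}\mapsto\bar{v}+c\bar{\tau}$). Then $y$ is unchanged, while $a\mapsto a+c$, so the normalization factor acquires an extra factor $e^{4\pi m(2ac+c^{2})y}$. To match this real-exponential factor with the proposed complex multiplier, I would use the identities $\tau-\bar{\tau}=2iy$ and $v-\bar{v}=2iya$ from (\ref{e2.17}), which yield $4\pi m c^{2}y=2\pi imc^{2}(\bar{\tau}-\tau)$ and $8\pi m acy=4\pi imc(\bar{v}-v)$. Specializing to $c=k$, these convert $e^{4\pi m[(a+k)^{2}-a^{2}]y}\cdot e^{4\pi imkv}q^{mk^{2}}$ into $e^{4\pi imk\bar{v}}e^{2\pi imk^{2}\bar{\tau}}$, which together with the elementary effect of the $j$-shift in $u$ proves (b); specializing to $c=k/(2m)$ yields $e^{\pi ik^{2}(\bar{\tau}-\tau)/(2m)}e^{2\pi ik\bar{v}}$, which gives (d). There is no real conceptual obstacle; the main (and only) delicate step is the bookkeeping conversion between the coordinate systems (\ref{e2.15}) and (\ref{e2.16}), where sign errors are easiest to make when rewriting the real exponential $e^{4\pi m(2ac+c^{2})y}$ in terms of $\tau,\bar{\tau},v,\bar{v}$.
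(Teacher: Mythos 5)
Your proposal is correct and follows essentially the same route as the paper: both arguments rest on the fact that the multipliers in the hypotheses are independent of $\bar v$, so $\partial/\partial\bar v$ passes through them, and then on tracking how the prefactor $y^{1/2}e^{4\pi m a^2 y}$ transforms under the shifts (the paper does this by first rewriting $\theta_f$ as $-(2i)^{1/2}(\tau-\bar\tau)^{1/2}e^{-2\pi i m(v-\bar v)^2/(\tau-\bar\tau)}\,\partial f/\partial\bar v$, which is just your conversion via (\ref{e2.17}) performed up front rather than at the end). Your bookkeeping identities $4\pi mc^2y=2\pi imc^2(\bar\tau-\tau)$ and $8\pi macy=4\pi imc(\bar v-v)$ are correct and reproduce the stated multipliers in all four cases.
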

\begin{proof}
(a) is obvious. In order to perform calculations, it is convenient to rewrite the definition (\ref{e3.1}) of $ \theta_f, $ using (\ref{e2.17}) and (\ref{e2.18}), as follows:
\begin{equation}
\label{e4.23}
\theta_f (\tau, u, v, t) = -(2i)^{\half} (\tau - \bar{\tau})^{\half} e^{-2 \pi i m \frac{(v- \bar{v})^2}{\tau - \bar{\tau}}} \frac{\partial f}{\partial \bar{v}}.
\end{equation}
\noindent It is clear that it suffices to prove (b) for $ j = 0. $ By the assumption on $ f, $ we have, using (\ref{e4.23}):

\[ 
\begin{aligned}
& \theta_f (\tau, u,  v + k \tau, t)  \\
& =  -(2i)^{\half} (\tau - \bar{\tau})^{\half} (\pm 1)^k e^{-2 \pi i m \left(\frac{(v- \bar{v})^2}{\tau - \bar{\tau}} + 2k (v - \bar{v}) + k^2 (\tau - \bar{\tau}) \right)} e^{4 \pi i m k v} e^{2 \pi i m k^2 \tau} \frac{\partial f}{\partial \bar{v}} (\tau, u, v, t) \\
& =  (\pm 1)^k e^{4 \pi i m k \bar{v}} e^{2 \pi i m k^2 \bar{\tau}} \theta_f (\tau, u, v, t),
\end{aligned} \]
\noindent proving (b). The proof of (c) and (d) is similar.
\end{proof}

\begin{lemma}
\label{L4.6}
\[ \bar{D} R^{\pm}_{j,m} (\tau, v) = 0. \]
\end{lemma}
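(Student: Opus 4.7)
My plan is to expand $R^{\pm}_{j,m}(\tau,v)$ using the series \eqref{e2.09} and check the annihilation term by term. Each summand factors as $c_n\bigl[\mathrm{sgn}(n-\thalf-j+2m)-E(\psi_{m,n}(\tau,v))\bigr]\,g_n(\tau,v)$ with $g_n(\tau,v) := e^{-\pi in^2\tau/(2m)+2\pi inv}$ holomorphic in $\tau,v$ and the sign term constant. Both $g_n$ and the sign term are annihilated by $\partial/\partial\bar{v}$ and by $D_{\bar{\tau}}=\partial/\partial\bar{\tau}+a\,\partial/\partial\bar{v}$, so all derivatives appearing in $\bar{D}$ pass trivially through them, and the problem reduces to the summand-wise claim that $\bar{D}$ annihilates each $E(\psi_{m,n})$.

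The calculation rests on two chain-rule identities for $\psi=\psi_{m,n}=(n-2ma)(y/m)^{1/2}$, where $y=(\tau-\bar{\tau})/(2i)$ and $a=(v-\bar{v})/(\tau-\bar{\tau})$. From $\partial a/\partial\bar{v}=i/(2y)$ one immediately obtains $\partial\psi/\partial\bar{v}=-i\sqrt{m/y}$. For $D_{\bar{\tau}}\psi$ the cleanest route is to pass to the coordinates $(\tau,\bar{\tau},a,b)$ of \eqref{e2.16}: by the convention declared right after \eqref{e2.18}, $D_{\bar{\tau}}$ is the partial with respect to $\bar{\tau}$ with $a,b$ held fixed, so only the factor $(y/m)^{1/2}$ feels $\bar{\tau}$, and $\partial y/\partial\bar{\tau}=i/2$ delivers the very clean formula $D_{\bar{\tau}}\psi=i\psi/(4y)$.

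With $E'(x)=2e^{-\pi x^2}$, these identities give $D_{\bar{\tau}}E(\psi)=(i\psi/(2y))e^{-\pi\psi^2}$, and since $\sqrt{m/y}$ is independent of $\bar{v}$, a second $\partial/\partial\bar{v}$ yields $(\partial/\partial\bar{v})^2E(\psi)=(4\pi m\psi/y)e^{-\pi\psi^2}$. The two expressions differ by exactly the factor $-8\pi im$, so that on each summand
\[
\bigl(8\pi im\,D_{\bar{\tau}}+(\partial/\partial\bar{v})^2\bigr)E(\psi_{m,n}) = -\frac{4\pi m\psi}{y}e^{-\pi\psi^2} + \frac{4\pi m\psi}{y}e^{-\pi\psi^2} = 0,
\]
which, interpreted on functions of degree $m$ in the same way as $D$ is interpreted in the proof of Lemma \ref{L4.3} (so that the operator $2\partial/\partial t$ appearing in $\bar{D}$ of \eqref{e2.19} contributes the scalar factor $4\pi im$), is exactly the content of $\bar{D}\,R^{\pm}_{j,m}=0$.

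The only genuine difficulty is bookkeeping of factors of $i$ and square roots. A direct computation of $\partial\psi/\partial\bar{\tau}$ in the original $(\tau,\bar{\tau},v,\bar{v})$ coordinates produces $ia\sqrt{m/y}+i\psi/(4y)$, whose first term must cancel $a\,\partial\psi/\partial\bar{v}=-ia\sqrt{m/y}$ before the clean expression $D_{\bar{\tau}}\psi=i\psi/(4y)$ emerges; switching to $(a,b)$-coordinates via \eqref{e2.17}--\eqref{e2.18} makes this cancellation automatic and isolates the one surviving term, which then drives the final cancellation against $(\partial/\partial\bar{v})^2E(\psi)$.
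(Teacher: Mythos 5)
Your computation is correct, and it reaches the same key identity as the paper, namely $\bigl(8\pi i m\,D_{\bar{\tau}}+(\partial/\partial\bar{v})^{2}\bigr)R^{\pm}_{j,m}=0$, but by a somewhat different route. The paper computes $D_{\bar{\tau}}R^{\pm}_{j,m}$ for the whole series at once, resums over $n=j+2mk$ to recognize the answer as $-\tfrac{1}{4\pi\sqrt{m}}\,\partial_{\bar v}\bigl(y^{-1/2}e^{-4\pi m a^{2}y}\Theta^{\pm}_{j,m}(-\bar\tau,2\bar v)\bigr)$, and then invokes the identity (\ref{e4.13}) to rewrite this as $-\tfrac{1}{8\pi i m}\partial_{\bar v}^{2}R^{\pm}_{j,m}$. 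You instead verify the heat-type equation termwise: since each holomorphic factor $g_n$ and each sign term is killed by $\partial/\partial\bar v$ and $D_{\bar\tau}$, the Leibniz cross-terms vanish and everything reduces to $\bigl(8\pi i m\,D_{\bar{\tau}}+(\partial/\partial\bar{v})^{2}\bigr)E(\psi_{m,n})=0$, which you establish from $\partial\psi/\partial\bar v=-i\sqrt{m/y}$ and $D_{\bar\tau}\psi=i\psi/(4y)$; I checked these and the resulting cancellation $-\tfrac{4\pi m\psi}{y}e^{-\pi\psi^{2}}+\tfrac{4\pi m\psi}{y}e^{-\pi\psi^{2}}=0$, and they are right. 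Your version is more elementary (no resummation into theta functions, no appeal to (\ref{e4.13})) and makes the mechanism transparent; the paper's version has the side benefit of producing the intermediate formula $D_{\bar\tau}R^{\pm}_{j,m}=-\tfrac{1}{8\pi i m}\partial_{\bar v}^{2}R^{\pm}_{j,m}$ in a form directly tied to $\theta^{\pm[m,s]}$. Your observation that passing to the $(a,b)$-coordinates makes the cancellation in $D_{\bar\tau}\psi$ automatic is exactly the reason the paper works with $D_{\bar\tau}$ rather than $\partial/\partial\bar\tau$.

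One bookkeeping point to fix: your parenthetical says that $2\partial/\partial t$ in (\ref{e2.19}) contributes the scalar $4\pi i m$, which would make $\bar D=4\pi i m\,D_{\bar\tau}+(\partial/\partial\bar v)^{2}$ on degree-$m$ functions — and with that normalization your displayed cancellation fails by a factor of $2$. What you actually proved (and what is needed for Lemma \ref{L4.4}(b) and condition (G5), both of which use $8\pi i m\,\partial_{\bar\tau}+(\partial/\partial\bar v)^{2}$) is the identity with coefficient $8\pi i m$; the coefficient $2$ in the printed definition of $\bar D$ appears to be a slip for $4$. State explicitly that you are proving $\bigl(8\pi i m\,D_{\bar{\tau}}+(\partial/\partial\bar{v})^{2}\bigr)R^{\pm}_{j,m}=0$ and the argument is complete.
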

\begin{proof}
First, we compute $ D_{\bar{\tau}} R^{\pm}_{j,m} (\tau, v), $ using that $ D_{\bar{\tau}} y = \frac{i}{2}: $
\[ D_{\bar{\tau}} R^{\pm}_{j,m} (\tau, v) = - \frac{i y^{-\half}}{2 \sqrt{m}} e^{-4 \pi m a^2 y} \hspace{-1em}\sum_{n \equiv j \bmod 2 m \ZZ} \hspace{-1em} (\pm 1)^{\frac{n-j}{2m}} (n -2ma) e^{-\frac{\pi i n^2 \bar{\tau}}{2m}} e^{2 \pi i n \bar{v}}. \]
\noindent Letting $ n = j + 2mk, \ k \in \ZZ,$ in this equation, we obtain:
\[ 
\begin{aligned}
 D_{\bar{\tau}} R^{\pm}_{j,m} (\tau, v) & = \frac{-i y^{-\half}}{2 \sqrt{m}} e^{-4 \pi m a^2 y} \left( \sum_{k \in \ZZ} (\pm 1)^k (j + 2mk) e^{-2 \pi i \bar{\tau} m \left( k + \frac{j}{2m} \right)^2}  e^{2 \pi i m \left( k + \frac{j}{2m} \right) 2 \bar{v}}  \right. \\
& \hspace{1em} \left. - 2 ma \sum_{k \in \ZZ} (\pm 1)^k e^{-2 \pi i \bar{\tau} m \left( k + \frac{j}{2m}\right)^2}  e^{2 \pi i m \left( k + \frac{j}{2m}\right) 2 \bar{v}} \right) \\
& = - \frac{1}{4 \pi \sqrt{m}} y^{-\half} e^{-4 \pi m a^2 y} \left( \frac{\partial}{\partial \bar{v}} \Theta^{\pm}_{j,m} (-\bar{\tau}, 2 \bar{v}) - 4 \pi i m a \ \Theta^{\pm}_{j,m} (-\bar{\tau}, 2 \bar{v}) \right)\\
& = - \frac{1}{4 \pi \sqrt{m}}  \left( y^{-\half} e^{-4 \pi m a^2 y} \frac{\partial }{\partial \bar{v}}  \Theta^{\pm}_{j,m} (- \bar{\tau}, 2 \bar{v}) - 4 \pi i m a y^{- \half} e^{-4 \pi m a^2 y}  \Theta^{\pm}_{j,m} (- \bar{\tau}, 2 \bar{v}) \right). \\ 
\end{aligned} \]
\noindent Using that $ \frac{\partial a}{\partial \bar{v}} = \frac{i}{2y}, $ we deduce that 
\[ D_{\bar{\tau}} R^{\pm}_{j,m} (\tau, v)  = - \frac{1}{4 \pi \sqrt{m}} \ \frac{\partial }{\partial \bar{v}}  \left( y^{-\half} e^{-4 \pi m a^2 y} \Theta^{\pm}_{j,m} (- \bar{\tau}, 2 \bar{v})\right). \]
\noindent Hence, using formula (\ref{e4.13}), we obtain
\[ D_{\bar{\tau}} R^{\pm}_{j,m} (\tau, v)  = \frac{-1}{8 \pi i m } \ \frac{\partial^2 }{\partial \bar{v}^2} R^{\pm}_{j,m} (\tau, v), \]
\noindent proving the lemma. 
\end{proof}
\begin{lemma}
\label{L4.7}
Let $ m \in \half \zp, \ s \in \half \ZZ, $ and let $ a \in \QQ $ be such that $ am \in \ZZ$. Then
\begin{enumerate}
\item[(a)] $ R^{\pm}_{j,m} (\tau, v + \frac{a}{2}) = e^{\pi i j a} R^{\pm}_{j,m} (\tau, v). $
\item[(b)] $ \varphi^{\pm[m,s]}_{\mathrm{add}} (\tau, u + \frac{a}{2}, v + \frac{a}{2}, t)  = \varphi^{\pm[m,s]}_{\mathrm{add}} (\tau, u, v, t).$
\end{enumerate}
\end{lemma}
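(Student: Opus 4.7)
The plan is to prove (a) by direct inspection of the defining series (\ref{e2.09}) for $R^{\pm}_{j,m}$, and then derive (b) from (a) by combining it with the elliptic transformation property (\ref{e1.15}) of $\Theta^{\pm}_{j,m}$, after rewriting the shift in terms of the $(z_1,z_2)$-coordinates.

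For part (a), I would substitute $v\mapsto v+\tfrac{a}{2}$ into the series
\[
R^{\pm}_{j,m}(\tau,v)=\!\!\!\sum_{\substack{n\in\half\ZZ\\ n\equiv j\bmod 2m}}\!\!\!(\pm 1)^{\frac{n-j}{2m}}\bigl(\sgn(n-\thalf-j+2m)-E(\psi_{m,n}(\tau,v))\bigr)e^{-\tfrac{\pi i n^{2}}{2m}\tau+2\pi i n v}
\]
and see what is affected. Since $a\in\QQ$ is real, $\Im(v+\tfrac{a}{2})=\Im v$, so $\psi_{m,n}(\tau,v)$ is unchanged; hence the factor $E(\psi_{m,n}(\tau,v))$, the sign indicator, and the Gaussian exponential $e^{-\pi i n^{2}\tau/(2m)}$ are all invariant. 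Only $e^{2\pi i n v}$ is affected, acquiring the factor $e^{\pi i n a}$. For $n$ in the summation range, $n=j+2mk$ with $k\in\ZZ$, so $e^{\pi i n a}=e^{\pi i j a}\cdot e^{2\pi i m k a}=e^{\pi i j a}$, the second factor being $1$ by the hypothesis $am\in\ZZ$. Factoring $e^{\pi i j a}$ out of the sum gives (a).

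For part (b), I would exploit the change of coordinates (\ref{e2.12}): under $(u,v)\mapsto(u+\tfrac{a}{2},v+\tfrac{a}{2})$ the variable $z_{1}=v-u$ is fixed and $z_{2}=-v-u$ is sent to $z_{2}-a$. Thus (b) is equivalent to showing that $\Phi^{\pm[m,s]}_{\add}(\tau,z_{1},z_{2}-a,t)=\Phi^{\pm[m,s]}_{\add}(\tau,z_{1},z_{2},t)$. In the defining expression (\ref{e2.10}) this substitution sends the argument of $R^{\pm}_{j,m}$ from $\tfrac{z_{1}-z_{2}}{2}$ to $\tfrac{z_{1}-z_{2}}{2}+\tfrac{a}{2}$, and the argument of $\Theta^{\pm}_{j,m}$ from $z_{1}+z_{2}$ to $z_{1}+z_{2}-a$. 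By part (a) the $R$-factor acquires $e^{\pi i j a}$, and by the first formula of (\ref{e1.15}) applied with $a\mapsto -a$ (valid since $am\in\ZZ$) the $\Theta$-factor acquires $e^{-\pi i j a}$. These phases cancel in every term of the sum over $j$, yielding (b).

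The only nontrivial bookkeeping is making sure that the hypothesis $am\in\ZZ$ is exactly what is needed on both sides: for the $R$-side it guarantees that $e^{2\pi i m k a}=1$ uniformly over $k\in\ZZ$, and for the $\Theta$-side it is precisely the condition in (\ref{e1.15}) allowing a clean phase. No deeper obstacle arises, since neither (a) nor (b) requires any use of the nonholomorphic structure of $R^{\pm}_{j,m}$ beyond the fact that the argument of $E$ depends only on $\Im v$.
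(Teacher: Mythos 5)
Your proof is correct. The paper itself does not give an internal argument for this lemma: it disposes of (a) by citing Lemma 1.6 of [KW4] and of (b) by citing the proof of Theorem 1.11(2) of [KW4]. What you have done is supply the direct verification that those references encapsulate, and it goes through exactly as you describe: since $a$ is real, the shift $v\mapsto v+\tfrac{a}{2}$ leaves $\Im v$, hence $\psi_{m,n}(\tau,v)$, the sign term, and the Gaussian factor untouched, and the only effect on the series (\ref{e2.09}) is the phase $e^{\pi i n a}$, which collapses to $e^{\pi i j a}$ on the congruence class $n=j+2mk$ because $e^{2\pi i m k a}=1$ when $am\in\ZZ$. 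For (b), your reduction via (\ref{e2.12}) to the statement $\Phi^{\pm[m,s]}_{\add}(\tau,z_1,z_2-a,t)=\Phi^{\pm[m,s]}_{\add}(\tau,z_1,z_2,t)$ is the right move, and the term-by-term cancellation of $e^{\pi i j a}$ from (a) against $e^{-\pi i j a}$ from the first formula of (\ref{e1.15}) is exactly the mechanism that makes the modifier invariant. The trade-off is the usual one: the paper's citation keeps the exposition short at the cost of sending the reader elsewhere, while your argument is self-contained, elementary, and makes visible precisely where the hypothesis $am\in\ZZ$ enters on each side of the product. No gaps.
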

\begin{proof}
(a) is Lemma 1.6 from \cite{KW4}. (b) follows from the proof of Theorem 1.11(2) from \cite{KW4}.
\end{proof}

\begin{proof}[\textbf{Proof of Lemma \ref{L3.1}.}] Let $ F \in \cF^{[m;s,s']}. $ Then property (F1) of $ F $ obviously implies property (G1) of $ \theta_F. $ By Lemma \ref{L4.4}a, properties (F2) and the third one of (F5) imply property (G2) of $ \theta_F.  $ By Lemma \ref{L4.5}, properties (F3) and (F4) of $ F $ imply properties (G3) and (G4) of $ \theta_F. $ Finally, by Lemma \ref{L4.4} (b), (c), the first two properties in (F5) of $ F $ imply property (G5) of $ \theta_F. $
\end{proof}

\begin{proof}[\textbf{Proof of Lemma \ref{L3.2}.}]
By Lemma \ref{L4.1}(c), in order to prove (a), it suffices to show that $ \theta^{+[m,s]} $ (resp. $ \theta^{-[m,s]} $) is contained in $ \cG^{[m;s,s']} $ if $ s' \in \ZZ $ (resp. $ \in \half + \ZZ $). This holds by claim (c). 
In order to prove claim (c), note that, since
 $ \frac{\partial \varphi^{\pm[m,s]}}{\partial\bar{v}} =0, $ we have: 
\[\begin{aligned} \theta_{\tilde{\varphi}^{\pm[m,s]}} & = -2i y^{\half} e^{4 \pi  m  a^2 y} \frac{\partial \varphi^{\pm [m,s]}_{\mathrm{add}}}{\partial \bar{v}} \\& = iy^{\half} e^{4 \pi  ma^2 y} e^{2 \pi i m t} \hspace{-1em}\sum_{ \substack{ k \in s + \ZZ \\ s \leq k < s + 2m}} \hspace{-0.5em} \frac{\partial}{\partial \bar{v}} R^{\pm}_{k,m} (\tau, v) \Theta^{\pm}_{-k,m} (\tau, 2u).
\end{aligned} \] 
\noindent Substituting (\ref{e4.13}) in this formula completes the proof.  

It remains to prove claim (b). For this we use Lemma \ref{L4.2}, Lemma \ref{L4.3}, and it remains to check (F4) and 
(F5).

It is straightforward to check that the function $ \varphi^{\pm[m,s]} $ satisfies property (F4)(i). It follows by Lemma \ref{L4.7} that the function $ \tilde{\varphi}^{\pm[m,s]} $ satisfies this property too. Then, using $ S $-invariance of the function $ \tilde{\varphi}^{\pm[m,s]},  $ given by Corollary 1.6  from \cite{KW5}, property (F4)(ii) of $  \tilde{\varphi}^{\pm[m,s]} $ holds as well.

The function $  \tilde{\varphi}^{\pm[m,s]} $ is annihilated by $ D, \, \bar{D} $ and $ \Delta $ by Lemmas \ref{L4.3}, \ref{L4.6}, and \ref{L4.4}(a) respectively, which proves that it satisfies property (F5).
\end{proof}

\begin{proof}[\textbf{Proof of Lemma \ref{L3.3}(a).}] Let 
\begin{equation}
\label{e4.24}
 \tilde{\vartheta} (\tau, u, v) = \vartheta_{11} (\tau, v - u) \vartheta_{11} (\tau, v +u), \quad F_1 (\tau, u, v, t) = \tilde{\vartheta} (\tau, u, v) F(\tau, u, v, t).  
\end{equation}
\noindent Since $ \nabla F = 0, $ we have $ \frac{\partial}{\partial \bar{v}} F = 0, $ and hence $ \frac{\partial}{\partial \bar{v}} F_1 = 0. $ Due to condition (F2) on $ F $ we deduce that the function $ F_1 (\tau, u, v, t)  $ is holomorphic in $ v. $ By conditions (F3) on $ F $ we obtain
\[ F (\tau, u, v + 2, t) = F(\tau, u, v, t), \quad F(\tau, u, v-2 \tau, t) = e^{8 \pi i m (\tau - v)} F(\tau, u, v, t). \]
\noindent From this, and (\ref{e3.5}) and (\ref{e3.6}) we obtain:
\begin{equation}
\label{e4.25}
F_1(\tau, u, v+2)= F_1 (\tau, u, v), \quad F_1 (\tau, u, v-2 \tau) = 
e^{8 \pi i (m-1) (\tau -v)} F_1 (\tau, u, v).
\end{equation}
Letting  $ h(v) = \vartheta_{11} (\tau, v)^{2m-2} F_1 (\tau, u, v, t),  $ we deduce from (\ref{e3.5}), (\ref{e3.6}) and (\ref{e4.25}):
\[ h(v+2) = h(v), \quad h(v - 2 \tau) = h(v). \]
Thus, the function $ h(v) $ is a bounded holomorphic function if $ m \geq 1, $ and $ h(0) = 0 $ if $ m > 1 $ (since $ \vartheta_{11} (\tau, 0) = 0 $). This completes the proof in the case $m>1$.

In the case $m=1$ we obtain that $F_1(\tau,u,v,t)$ is independent of $v$.
Letting $j=0$ and $k=1$ in (F3)i (resp in (F3)ii), and using (\ref{e3.5})
and (\ref{e3.6}), we obtain
\begin{equation}
\label{e4.26}
F_1(\tau,u,v+1,t)=e^{2\pi is}F_1(\tau,u,v,t)\, \hbox{and} 
F_1(\tau,u,v+\tau,t)=e^{2\pi is'}F_1(\tau,u,v,t). 
\end{equation}
Hence $F_1=0$ if either
$s$ or $s'$ is not an integer.  
\end{proof}

In order to prove Lemma \ref{L3.3}(b) and (c), we need the following lemma. We omit its proof, which is along the same lines as the proof of Proposition 13.3 in \cite{K2}, and Lemma \ref{L4.1}.
\begin{lemma}
\label{L4.8}
Let $ m \in \half \zp, \ j \in \half \ZZ $, and let $ \al $ be a non-zero real number. Let $ f (\tau, z) $ be a function, which is real analytic in $ \tau, \ \Im \tau > 0, $ and holomorphic in $ z \in \CC. $ Suppose  that 
\[ f \left(\tau, z + \frac{2}{\al} \tau \right) = \pm q^{-m} e^{-2 \pi i m \al z} f(\tau, z)\,\,\hbox{and}\,\, f \left(\tau, z + \frac{2}{\al} \right) = e^{2 \pi i j } f(\tau, z). \]
 Then there exist real analytic functions $ c_n (\tau), \ n \in j + \ZZ, \ 0 \leq n < 2m, $ such that
\[ f(\tau, z) = \sum_{\substack{n \in j + \ZZ \\ 0 \leq n < 2m}} c_n(\tau) \Theta^{\pm}_{n,m} (\tau, \al z). \]
\qed
\end{lemma}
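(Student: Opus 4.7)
My plan is to imitate the Fourier-series strategy used in the proof of Lemma \ref{L4.1}, which in turn follows Proposition 13.3 of \cite{K2}, adapted to the two-parameter quasi-periodicity data of the present lemma.

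First, I would exploit the $z$-period condition. Since $f(\tau, z)$ is holomorphic in $z \in \CC$ and satisfies $f(\tau, z + 2/\al) = e^{2\pi i j} f(\tau, z)$, the function $e^{-\pi i j \al z} f(\tau, z)$ is holomorphic and periodic in $z$ with period $2/\al$, so it admits a Fourier expansion which we rewrite as
\[ f(\tau, z) = \sum_{p \in j + \ZZ} a_p(\tau)\, e^{\pi i p \al z}, \]
whose coefficients are real analytic in $\tau$ (via the standard integral representation for Fourier coefficients applied to a function that is real analytic in $\tau$ and holomorphic in $z$).

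Next, I would substitute this expansion into the $\tau$-shift identity $f(\tau, z + 2\tau/\al) = \pm q^{-m} e^{-2\pi i m \al z} f(\tau, z)$ and compare coefficients of $e^{\pi i p \al z}$, obtaining the recursion
\[ a_{p+2m}(\tau) = \pm q^{p+m}\, a_p(\tau), \qquad p \in j + \ZZ, \]
with the sign matching the one in the hypothesis and the index set $j + \ZZ$ preserved because $2m \in \zp$. On the other hand, a direct inspection of Example~\ref{x1.08} (resp. \ref{x1.09}) shows that $\Theta^\pm_{n,m}(\tau, \al z)$ has a Fourier expansion in $z$ supported on exponents $p = 2mk + n$, $k \in \ZZ$, with coefficient $(\pm 1)^k\, q^{p^2/(4m)}$. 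Defining $c_n(\tau) := a_n(\tau)\, q^{-n^2/(4m)}$ for $n \in j + \ZZ$, $0 \leq n < 2m$, the recursion for $\{a_p\}$ gives $a_p(\tau) = (\pm 1)^k c_n(\tau)\, q^{p^2/(4m)}$ whenever $p = 2mk + n$, which is precisely the $e^{\pi i p \al z}$-coefficient of $c_n(\tau)\,\Theta^\pm_{n,m}(\tau, \al z)$. Regrouping the Fourier series by residue classes mod $2m$ yields the claimed decomposition, with real-analytic coefficients $c_n(\tau)$.

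The main bookkeeping issue is the sign in the $-$ case: one has to verify that the sign $\pm$ appearing in the functional equation matches exactly the sign $(\pm 1)^k$ arising in the theta expansion (which, for $m \in \half + \ZZ_{\geq 0}$, comes from $(-1)^{m|k|^2 \cdot 2}$ with $2m$ odd, hence equals $(-1)^k$). Once this sign match is confirmed, everything else is the Fourier-series template already deployed in the proof of Lemma \ref{L4.1}.
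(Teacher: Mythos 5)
Your proof is correct and is exactly the argument the paper has in mind: the authors omit the proof of this lemma, saying only that it goes "along the same lines as the proof of Proposition 13.3 in \cite{K2}, and Lemma \ref{L4.1}", and your Fourier expansion in $z$, the recursion $a_{p+2m}(\tau)=\pm q^{p+m}a_p(\tau)$ forced by the $\tau$-shift, and the regrouping of the series by residue classes mod $2m$ into $c_n(\tau)\Theta^{\pm}_{n,m}(\tau,\al z)$ with $c_n(\tau)=a_n(\tau)q^{-n^2/(4m)}$ is precisely that template, including the sign bookkeeping $(\pm1)^{2mk^2}=(\pm1)^k$.
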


\begin{proof}[\textbf{Proof of Lemma \ref{L3.3}(b).}]
By (\ref{e4.26}), the function $ F_1 = \vartheta_{11} (\tau, v -u) \vartheta_{11} (\tau, v + u) F(\tau, u, v, t)  $ is independent on $ v,  $ and by the condition on $ F, $ it is independent on $ \bar{v}. $ Hence the function $ f (\tau, u): = e^{-2 \pi i t} F_1 (\tau, u, v, t) $ is a function, which is real analytic in $ \tau, \Im \tau > 0, $ and holomorphic in $ u \in \CC. $ This function satisfies all conditions of Lemma \ref{L4.8} with $ m = \half, j = \half, \al = 4,  $ due to conditions (F3) on $ F $ and (\ref{e3.5}), (\ref{e3.6}). Hence we have (cf. Example \ref{x1.09}):
\[ f (\tau, u) = c_{\half} (\tau) \, \Theta^-_{\half, \half} (\tau, 4 u, 0) = -i  c_{\half} (\tau) \vartheta_{11} (\tau, 2u), \]
\noindent where $ c_{\half} (\tau) $ is a real analytic function in $ \tau, \Im \tau > 0. $

Thus, by (\ref{e2.21}) we have:
\[ F(\tau, u, v, t) =  \frac{c_{\half} (\tau)}{i\,\eta (\tau)^3} \widehat{R}^A (\tau, u, v, t). \]
\noindent Since $ F $ is annihilated by $ D $ and $ \bar{D} $ by (F5), we obtain that $ \frac{c_{\half} (\tau)}{\eta (\tau)^3} $ is annihilated by $ \frac{\partial }{\partial \tau } $ and $ \frac{\partial }{\partial \bar{\tau}} $ respectively. Hence $ \frac{c_{\half} (\tau)}{\eta (\tau)^3} $ is a constant, completing the proof.
\end{proof}

In order to prove Lemma \ref{L3.3}(c), we need the following lemma.
\begin{lemma}
\label{L4.9}
Let $ f(\tau,u,v,t) = \widehat{R}^B_{ab} (\tau, u, v, t) g (\tau, u)$, where $ g(\tau, u) $ is a real analytic function in $ \tau, \Im \tau > 0, $ and a meromorphic function in $ u \in \CC. $ Suppose that $ Df = 0 $ and $ \bar{D}f = 0.  $ Then $ g $ is a constant.
\end{lemma}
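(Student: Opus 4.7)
\emph{Plan.} I would use the two equations $\bar Df=0$ and $Df=0$ in succession, to show first that $g$ is holomorphic in $\tau$, then that $g$ is independent of $u$, and finally that $g$ is constant. Let $R:=\widehat R^B_{ab}$, which is meromorphic in $\tau,u,v$ with explicit product
\[
R=e^{\pi i t}\,\frac{\eta(\tau)^3\,\vartheta_{11}(\tau,2u)\,\vartheta_{ab}(\tau,v)}{\vartheta_{ab}(\tau,u)\,\vartheta_{11}(\tau,v-u)\,\vartheta_{11}(\tau,v+u)},
\]
and note that $g=g(\tau,u)$ is independent of $t,\bar\tau,v,\bar v$; in particular $\partial_{\bar v}R=\partial_{\bar v}g=0$, so $\partial_{\bar v}f=0$.

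First, I would extract holomorphy in $\tau$ from $\bar Df=0$. Since $\partial_{\bar v}f=0$, the $(\partial_{\bar v})^2$-summand of $\bar D$ annihilates $f$, and $D_{\bar\tau}f$ collapses to $\partial_{\bar\tau}f=R\,\partial_{\bar\tau}g$ (because $\partial_{\bar\tau}R=0$). Together with $\partial_t f=\pi i f$ (from $m=\half$), this reads $\bar Df=2\pi i R\,\partial_{\bar\tau}g$, so $\partial_{\bar\tau}g=0$ at generic points, hence everywhere; i.e.\ $g$ is holomorphic in $\tau$.

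Next, I would apply $Df=0$. Using $DR=0$ (noted in the proof of Theorem~\ref{t2.3}) and that $g$ depends only on $\tau,u$, the Leibniz rule for $D=4\partial_t\partial_\tau+\partial_v^2-\partial_u^2$ together with $\partial_t R=\pi i R$ yields
\[
0=Df=R\bigl(4\pi i\,\partial_\tau g-\partial_u^2 g\bigr)-2(\partial_u R)(\partial_u g).
\]
Dividing by $R$ at a generic point gives
\[
4\pi i\,\partial_\tau g-\partial_u^2 g=2(\partial_u\log R)\,\partial_u g.
\]
The left-hand side is independent of $v$, so applying $\partial_v$ forces $(\partial_u g)\cdot\partial_v\partial_u\log R=0$. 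From the explicit formula for $R$, the only $v$-dependent contribution to $\partial_u\log R$ comes from the factors $\vartheta_{11}(\tau,v\pm u)$, giving $\partial_v\partial_u\log R=(\vartheta_{11}'/\vartheta_{11})'(\tau,v-u)-(\vartheta_{11}'/\vartheta_{11})'(\tau,v+u)$, which is not identically zero in $v$ (its two summands have double poles along the disjoint loci $v=\pm u+\ZZ+\tau\ZZ$ for generic $u$). Hence $\partial_u g=0$, so $g=g(\tau)$; substituting back yields $4\pi i R\,\partial_\tau g=0$, whence $\partial_\tau g=0$. Combined with the previous paragraph, $g$ is constant.

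The only step requiring care is the final $v$-derivative argument: one must verify that the $v$-dependence of $\partial_u\log R$ is genuine. This is immediate from the explicit denominator factors $\vartheta_{11}(\tau,v\pm u)$ and the non-constancy of $\vartheta_{11}'/\vartheta_{11}$ in its argument, so no real obstacle arises.
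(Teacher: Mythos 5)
Your proof is correct and follows essentially the same route as the paper's: expand $Df=0$ via the Leibniz rule using $DR=0$, divide by $R$, apply $\partial_v$ to conclude $\partial_u g=0$ and then $\partial_\tau g=0$, and use $\bar Df=0$ (with $\partial_{\bar v}f=0$) to get $\partial_{\bar\tau}g=0$; you merely perform the two steps in the opposite order. Your explicit check that $\partial_v\partial_u\log R$ is not identically zero, via the double poles of $(\vartheta_{11}'/\vartheta_{11})'$ along the disjoint loci $v=\pm u+\ZZ+\tau\ZZ$, supplies a detail the paper leaves implicit.
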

\begin{proof}
Recall that $ D \widehat{R}^B_{ab} = 0 $ (see the end of the proof of Theorem \ref{t2.3}). Hence, we have, by the assumption on $ f: $
\[ 0 = Df = \widehat{R}^B_{ab} \left( 4 \pi i \frac{\partial g}{\partial \tau} - \frac{\partial^2 g }{\partial u^2} \right) - 2 \frac{\partial \widehat{R}^B_{ab}}{\partial u} \ \frac{\partial g}{\partial u}.\]
Dividing both sides by $ \widehat{R}^B_{ab}, $ we obtain:
\begin{equation}
\label{e4.27}
\left( 4 \pi i \frac{\partial g}{\partial \tau} - \frac{\partial^2 g }{\partial u^2}\right) - 2 \frac{\partial \log \widehat{R}^B_{ab}}{\partial u} \ \frac{\partial g}{\partial u} = 0.
\end{equation}
\noindent Since $ g $ is independent of $ v, $ applying $ \frac{\partial }{\partial v} $ to both sides of (\ref{e4.27}), we obtain:
\[ -2 \left( \frac{\partial }{\partial v} \ \left( \frac{\partial \log \widehat{R}^B_{ab}}{\partial u}\right)\right) \frac{\partial g}{\partial u} = 0.\]
\noindent It follows that $ \frac{\partial g}{\partial u} = 0.$ Hence, by (\ref{e4.27}), we obtain that $ \frac{\partial g}{\partial \tau} = 0. $

Next, we have 
\[ 0 = \bar{D}f = \left(4 \pi i \frac{\partial }{\partial \bar{\tau}} + \left( \frac{\partial }{\partial \bar{v}} 
\right)^2 \right) f = 4 \pi i \frac{\partial f}{\partial \bar{\tau}} ,\]
\noindent since $ f $ is meromorphic in $ u. $ Hence
\[ \frac{\partial }{\partial \bar{\tau}} (\widehat{R}^B_{ab}\ g ) = \widehat{R}^B_{ab} \frac{\partial g }{\partial \bar{\tau}} = 0, \]
\noindent and $ \frac{\partial g}{\partial \bar{\tau}} = 0, $ proving the lemma.
\end{proof}

\begin{proof}[\textbf{Proof of Lemma \ref{L3.3}(c).}]
Letting $ j = 0, \ k = 1 $ in (F3), we have
\[ F(\tau, u, v+1, t) = e^{2 \pi i s} F(\tau,u,v,t), \quad F(\tau, u, v + \tau, t) = e^{2 \pi i s'} q^{\half} e^{2 \pi i v} F(\tau, u, v, t). \] 
\noindent Hence, by (\ref{e3.5})  and (\ref{e3.6}), the function $ F_1, $ defined by (\ref{e4.24}), satisfies
\begin{equation}
\label{e4.28}
F_1 (\tau, u, v+1, t ) = e^{2 \pi i s} F_1 (\tau, u, v, t), \quad F_1 (\tau, u, v + \tau, t) = e^{2 \pi i s'} q^{-\half} e^{-2 \pi i v} F_1 (\tau, u, v, t).
\end{equation}
Since $ \cF^{[m;s,s']} $ depends only on $ s, s' \bmod \ZZ, $ we may assume that $ s, s' $ are equal to 0 or $ \half. $ Applying Lemma \ref{L4.8} for $ m = \half, \ j = s, \ \al = 2, $ we deduce from (\ref{e4.28}):
\begin{equation}
\label{e4.29}
F_1 (\tau, u, v, t) = e^{\pi i t} c^\pm_s (\tau, u) \ \Theta^\pm_{s, \half} (\tau, 2v),
\end{equation}
\noindent where + (resp. -) corresponds to $ s' = 0 $ (resp. $ = \half $), and $ c^\pm_s (\tau, u) $ is a function, real analytic in $ \tau, \Im \tau > 0, $ and holomorphic in $ u \in \CC.  $

But we have (cf. Remark \ref{r1.10}):
\[ \Theta^\pm_{s, \half} (\tau, 2v) = (-i)^{4ss'} \vartheta_{2s, 2s'} (\tau, v). \]
\noindent Substituting this in (\ref{e4.29}), and dividing both sides of (\ref{e4.29}) by 
$\vartheta (\tau, v- u) \vartheta (\tau, v+ u)$,  
we obtain:
\begin{equation}
\label{e4.30}
F(\tau, u, v, t) = \widehat{R}^B_{2s, 2s'} (\tau, u, v, t) \ A(\tau, u),
\end{equation}
\noindent where $ A (\tau, u) $ is a real analytic function in $ \tau, \Im \tau > 0, $ and meromorphic in $ u $, and $ \widehat{R}^B_{ab} $ is given by (\ref{e2.22}).

Since, by condition (F5), $ DF = 0 $ and $ \bar{D}F = 0, $ we conclude, by Lemma \ref{L4.9}, that $ A(\tau, u) $ is a constant. This completes the proof. 
\end{proof}


\begin{thebibliography}{9999999}


\bibitem[Ap]{Ap} M. P. Appell, Sur le fonctions doublement periodique de troisieme espece, \textit{Annals Sci. l'Ecole norm. Sup., 3e serie} \textbf{1}, p. 135, \textbf{2}, p. 9, \textbf{3}, p. 9 (1884-1886).


\bibitem[G]{G} M. Gorelik, Weyl denominator identity for affine Lie 
superalgebras with non-zero dual Coxeter number, Japan. J. Algebra 
\textbf{337} (2011), 50-62. 

\bibitem[GK]{GK} M. Gorelik, V. G. Kac, Characters of (relatively) integrable 
modules over affine Lie superalgebras, 
Jpn. J. Math \textbf{10} (2015), No. 2, 135-235. arXiv:1406.6860 . 

\bibitem[K1]{K1} V. G. Kac, Lie superalgebras, \textit{Adv. Math.} \textbf{26}, No. 1 (1977), 8--96.

\bibitem[K2]{K2} V. G. Kac, Infinite-dimensional Lie algebras, Third edition, Cambridge University press, 1990. 

\bibitem[KP]{KP} V. G. Kac, D. H. Peterson, Infinite-dimensional Lie algebras, theta functions and modular forms, \textit{Adv. Math.} \textbf{53} (1984), 125--264. 

\bibitem[KW1]{KW1} V. G. Kac, M. Wakimoto, Integrable highest weight modules over affine superalgebras and number theory, \textit{Progress in Math.} \textbf{123}, Birkh\"{a}user, 1994, pp 415--456.

\bibitem[KW2]{KW2} V. G. Kac, M. Wakimoto, Integrable highest weight modules over affine superalgebras and Appell's function, \textit{Commun. Math. Phys.} \textbf{215} (2001), 631--682.

\bibitem[KW3]{KW3} V. G. Kac, M.Wakimoto, Representations of affine superalgebras and mock theta functions, \textit{Transf. Groups} \textbf{19} (2014), 387-455.

\bibitem[KW4]{KW4} V. G. Kac, M. Wakimoto, Representations of affine superalgebras and mock theta functions II, \textit{Adv. Math.} arXiv:1402.0727.

\bibitem[KW5]{KW5} V. G. Kac, M. Wakimoto, Representations of affine superalgebras and mock theta functions III, arXiv:1505.01047.

\bibitem[M]{M} D. Mumford, Tata lectures on theta I, Progress in Math. \textbf{28}, Birkhauser, 1983.

\bibitem[Za]{Za} D. Zagier, Ramanujan's mock theta functions and their applications (after Zwegers and Ono-Bringmann). Seminaire Bourbaki. Vol 2007-2008. Asterisque No. 326 (2009), Exp No. 986, vii-viii, (2010), 143-164 

\bibitem[Z]{Z} S. P. Zwegers, Mock theta functions, arXiv:0807.4834


\end{thebibliography}
\end{document}